\theoremstyle{plain}
\newtheorem{theorem}{Theorem}[section]
\newtheorem{corollary}[theorem]{Corollary}
\newtheorem{lemma}[theorem]{Lemma}
\newtheorem{proposition}[theorem]{Proposition}
\theoremstyle{definition}
\newtheorem{example}[theorem]{Example}
\newtheorem{problem}[theorem]{Problem}
\theoremstyle{remark}
\newtheorem{remark}[theorem]{Remark}
\newcommand{\A}{\mathcal{A}}
\newcommand{\scC}{\mathcal{C}}
\newcommand{\Hom}{\operatorname{Hom}}
\newcolumntype{K}[1]{>{\centering\arraybackslash}p{#1}}
\newcommand{\ehr}{\operatorname{ehr}} 
\newcommand{\Hilb}{\operatorname{Hilb}} 
\newcommand{\TL}{\mathrm{TL}}
\newcommand{\intvec}{\bm n}
\begin{document}

\title{Ehrhart quasi-polynomials and parallel translations}

\author{Akihiro Higashitani}
\address{Department of Pure and Applied Mathematics, Graduate School of Information Science and Technology, Osaka University, Suita, Osaka 565-0871, Japan}
\email{higashitani@ist.osaka-u.ac.jp}
\author{Satoshi Murai}
\address{Department of Mathematics, Faculty of Education Waseda University, 1-6-1 NishiWaseda, Shinjuku, Tokyo 169-8050, Japan}
\email{s-murai@waseda.jp}
\author{Masahiko Yoshinaga}
\address{Masahiko Yoshinaga, Department of Mathematics, Graduate School of Science, Osaka University, Toyonaka 560-0043, Japan.}


\maketitle

\begin{abstract}
Given a rational polytope $P \subset \mathbb R^d$,
the numerical function counting lattice points in the integral dilations of $P$ is known to become a quasi-polynomial, called the Ehrhart quasi-polynomial $\ehr_P$ of $P$.
In this paper we study the following problem:
Given a rational $d$-polytope $P \subset \mathbb R^d$, is there a nice way to know Ehrhart quasi-polynomials of translated polytopes $P+ \bm v$ for all $\bm v \in \mathbb Q^d$?
We provide a way to compute such Ehrhart quasi-polynomials using a certain toric arrangement and lattice point counting functions of translated cones of $P$.
This method allows us to visualize how constituent polynomials of $\ehr_{P+\bm v}$ change in the torus $\mathbb R^d/\mathbb Z^d$.
We also prove that information of $\ehr_{P+\bm v}$ for all $\bm v \in \mathbb Q^d$ determines the rational $d$-polytope $P \subset \mathbb R^d$ up to translations by integer vectors,
and characterize all rational $d$-polytopes $P \subset \mathbb R^d$ such that $\ehr_{P+\bm v}$ is symmetric for all $\bm v \in \mathbb Q^d$.
\end{abstract}

\section{Introduction}
Enumerations of lattice points in a convex polytope is a classical important theme relating to algebra, combinatorics and geometry of convex polytopes.
A fundamental result on this subject is Ehrhart's result which says that,
for any rational polytope $P \subset \mathbb R^d$,
the function $\mathbb Z_{\geq  0} \ni t \mapsto \# (tP \cap \mathbb Z^d)$ becomes a quasi-polynomial in $t$,
where $tP$ is the $t$th dilation of $P$
and $\#X$ denotes the cardinality of a finite set $X$.
This function is called the {\bf Ehrhart quasi-polynomial} of $P$ and we denote it by $\ehr_P$.
Let $P + \bm v =\{\bm x + \bm v \mid \bm x \in P\}$ be the convex polytope obtained from a convex polytope $P$ by the parallel translation by a vector $\bm v \in \mathbb R^d$.
The purpose of this paper is to develop a way to understand behaviors of $\ehr_{P+\bm v}$ when $\bm v $ runs over all vectors in $\mathbb Q^d$,
where $P$ is a fixed rational polytope.

One motivation of studying this problem is special behaviors of $\ehr_{P+\bm v}$
when we choose $\bm v \in \mathbb Q^d$ somewhat randomly.
Let us give an example to explain this. 
Let $T \subset \mathbb R^2$ be the trapezoid whose vertices are $(0,0),(1,0),(2,1)$ and $(0,1)$.
The Ehrhart quasi-polynomial of $T+ (\frac {17} {100}, \frac {52}{100})$ becomes the following quasi-polynomial having minimum period 100:
\begin{align*}
\ehr_{T+(\frac {17}{100},\frac {52}{100})}(t)
=
\begin{cases}
\frac 3 2 t^2 + \frac 5 2 t + 1 & (\ t \equiv 0\ ),\\
\frac 3 2 t^2 + \frac 3 2 t & (\ t \equiv 25,50,75\ ),\\
\frac 3 2 t^2 - \frac 1 2 t & \left( {\footnotesize 
\begin{array}{ll}
t \equiv 1,3,6,7,9,12,13,15,18,19,21,23,24,26,30, \vspace{-3pt}\\
\ 32,36,38,42,44,48,49,53,55,59,61,65,66,67,\vspace{-3pt}\\
\ 69,71,72,73,78,83,84,86,89,90,92,95,96,98\end{array}} \right),\\
\frac 3 2 t^2 + \frac 1 2 t & \left( {\footnotesize 
\begin{array}{ll}
t \equiv 2,4,5,8,10,11,14,16,17,20,22,28,29,31,33,34, \vspace{-3pt}\\
\ 35,37,40,41,45,47,51,52,54,56,57,58,60,62,64,68,\vspace{-3pt}\\
\ 70,74,76,77,79,80,81,82,85,87,88,91,93,94,97,99\end{array}} \right),
\end{cases}
\end{align*}
where ``$t \equiv a$" means ``$t \equiv a$ (mod $100$)".
This quasi-polynomial has several special properties.
For example, one can see
\begin{itemize}
    \item[($\alpha$)] It has a fairly large minimum period $100$, but it consists of only 4 polynomials.
    \item[($\beta$)] The polynomials $\frac 3 2 t^2 \pm \frac 1 2 t$ appear quite often comparing other two polynomials.
    \item[($\gamma$)] The polynomial $\frac 3 2 t^2 - \frac 1 2t$ appears when $t\equiv 1,3,6,7,\dots$, while the polynomial $\frac 3 2 t^2 +\frac 1 2 t=\frac 3 2 (-t)^2 - \frac 1 2(-t)$ appears when $t=\dots,93,94,97,99$. There seem to be a kind of reciprocity about the appearance of these two polynomials.
\end{itemize}
Our first goal is to explain why these phenomena occur by using a certain generalization of an Ehrhart quasi-polynomial, which was considered by McMullen \cite{McMullen} and is called a {\bf translated lattice point enumerators} in \cite{DY:2021}.

\subsection{First result}
We introduce a few notation to state our results.
A function $f : \mathbb Z \to \mathbb R$ is said to be a {\bf quasi-polynomial} if there is a natural number $q$ and polynomials $f_0,f_1,\dots,f_{q-1}$ such that
$$f(t)=f_i(t) \mbox{ for all }t \in \mathbb Z \mbox{ with } t \equiv i \mbox{ (mod }q).$$
A number $q$ is called a {\bf period} of $f$ and the polynomial $f_k$ is called the $k$th {\bf constituent} of $f$.
For convention, we define the $k$th constituent $f_k$ of $f$ for any $k \in \mathbb Z$ by setting $f_k=f_{k'}$ with $k' \equiv k$ (mod $q$).
For example, if $f$ has period $3$,
then the $7$th constituent equals the $1$st constituent $f_1$ and the $(-1)$th constituent equals the $2$nd constituent $f_2$.
We note that this definition does not depend on a choice of a period.
We will say that a function $L$ from $\mathbb Z_{> 0}$ (or $\mathbb Z_{ \geq 0}$) to $\mathbb R$ is a quasi-polynomial if there is a quasi-polynomial $f: \mathbb Z \to \mathbb R$ such that $L(t)=f(t)$ for all $t \in \mathbb Z_{> 0}$ (or $\mathbb Z_{ \geq 0}$),
and in that case we regard $L$ as a function from $\mathbb Z$ to $\mathbb R$ by identifying $L$ and $f$.

For a convex set $X \subset \mathbb R^d$ and a vector $\bm v \in \mathbb R^d$, we define the function $\TL_{X,\bm v}: \mathbb Z_{> 0} \to \mathbb R$ by
\[
\TL_{X,\bm v} (t)= \# \big( (tX+\bm v)\cap \mathbb Z^d \big)
\]
and call it the {\bf translated lattice points enumerator} of $X$ with respect to $\bm v$.
When $X$ is a convex polytope $P$,
we actually consider that $\TL_{P,\bm v}$ is a function from $\mathbb Z_{\geq 0}$ to $\mathbb R$ by considering that $tP=\{\bm 0\}$ when $t=0$.
Clearly $\TL_{P,\bm 0}$ is nothing but the Ehrhart quasi-polynomial of $P$.
Generalizing Ehrhart's results,
McMullen \cite[\S 4]{McMullen} proved that,
if $P$ is a rational polytope such that $qP$ is integral then $\TL_{P,\bm v}$ is a quasi-polynomial with period $q$,
and showed that there is a reciprocity between $\TL_{\mathrm{int}(P),\bm v}$ and $\TL_{P,-\bm v}$, where $\mathrm{int}(P)$ is the interior of $P$.
As we will see soon in Section 2, for a rational polytope $P \subset \mathbb R^d$ and $\bm v \in \mathbb Q^d$,
it follows from the above result of McMullen that
\begin{align}
\label{eq1-1}    
\mbox{the $k$th constituent of $\ehr_{P+\bm v}$}=
\mbox{the $k$th constituent of $\TL_{P,k\bm v}$}
\end{align}
for all $k \in \mathbb Z$.
This equation \eqref{eq1-1} was used in \cite{DY:2021} when $P$ is a lattice polytope, and is quite useful to study Ehrhart quasi-polynomials of translated polytopes.
Indeed, the equation says that knowing $\ehr_{P+\bm v}$ for all $\bm v \in \mathbb Q^d$ is essentially equivalent to knowing $\TL_{P,\bm v}$ for all $\bm v \in \mathbb Q^d$.
Our first goal is to explain that the latter information can be described as a finite information although $\ehr_{P+\bm v}$ could have arbitrary large minimum period.

To do this, we first discuss when
 $\TL_{P,\bm u}$ and $\TL_{P,\bm v}$ equal for different $\bm u,\bm v \in \mathbb R^d$ using toric arrangements.
For $\bm a=(a_1,\dots,a_d) \in \mathbb R^d$ and $b \in \mathbb R$,
let $H_{\bm a,b}$ be the hyperplane of $\mathbb R^d$ defined by the equation $a_1x_1+ \cdots+a_dx_d=b$.
Let $P$ be a rational convex $d$-polytope having $m$ facets $F_1,\dots,F_m$ such that each $F_k$ lies in the hyperplane $H_{\bm a_k,b_k}$ with $\bm a_k \in \mathbb Z^d$, $b_k \in \mathbb Z$ and $\mathrm{gcd}(\bm a_k,b_k)=1$.
We consider the arrangement of hyperplanes
$$\A_P=
\bigcup_{i =1}^m \{H_{\bm a_i,k} \mid  k \in \mathbb Z\}$$
and let $\Delta_P$ be the open polyhedral decomposition of $\mathbb R^d$ determined by $\A_P$.
Both $\A_P$ and $\Delta_P$ are invariant under translations by integer vectors,
so by the natural projection $\mathbb R^d \to \mathbb R^d /\mathbb Z^d$
they induce an arrangement of finite hyperplanes on the torus $\mathbb R^d /\mathbb Z^d$ 
and a finite open cell decomposition $\Delta_P/\mathbb Z^d$ of $\mathbb R^d /\mathbb Z^d$.
Let $[\bm x] \in \mathbb R^d/\mathbb Z^d$ denote the natural projection of $\bm x \in \mathbb R^d$ to $\mathbb R^d/\mathbb Z^d$.

\begin{theorem}
    \label{thm:1-2}
    With the notation as above,
    for $\bm u,\bm v \in \mathbb R^d$,
    if $[\bm u]$ and $[\bm v]$ belong to the same open cell of $\Delta_P /\mathbb Z^d$ then
    \[ \TL_{P,\bm u}(t)=\TL_{P,\bm v}(t) \ \ \mbox{ for all }t \in \mathbb Z_{\geq 0}.\]
\end{theorem}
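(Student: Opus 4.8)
The plan is to reduce the asserted equality of the two enumerators to the elementary fact that, for \emph{lattice} points, the defining inequalities of $tP+\bm v$ only depend on the integer parts of the numbers $\langle\bm a_i,\bm v\rangle$, and then to observe that those integer parts are constant on every cell of $\Delta_P$. First I would fix the irredundant facet description $P=\bigcap_{i=1}^m\{\bm x\in\R^d:\langle\bm a_i,\bm x\rangle\le b_i\}$, which is available because $P$ is full-dimensional (after possibly replacing $(\bm a_i,b_i)$ by $(-\bm a_i,-b_i)$, which changes neither $\A_P$ nor $\Delta_P$). Then, for $t\in\Z_{\ge 0}$, one has $tP+\bm v=\bigcap_{i=1}^m\{\bm x:\langle\bm a_i,\bm x\rangle\le tb_i+\langle\bm a_i,\bm v\rangle\}$, and for an integer point $\bm z\in\Z^d$ the inequality $\langle\bm a_i,\bm z\rangle\le tb_i+\langle\bm a_i,\bm v\rangle$ has integer left-hand side and an integer summand $tb_i$ on the right, so it is equivalent to $\langle\bm a_i,\bm z\rangle\le tb_i+\lfloor\langle\bm a_i,\bm v\rangle\rfloor$. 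Writing $c_i(\bm v):=\lfloor\langle\bm a_i,\bm v\rangle\rfloor$, this gives
\[
\TL_{P,\bm v}(t)=\#\big\{\bm z\in\Z^d:\langle\bm a_i,\bm z\rangle\le tb_i+c_i(\bm v)\ \text{for } i=1,\dots,m\big\},
\]
so $\TL_{P,\bm u}(t)=\TL_{P,\bm v}(t)$ for all $t\in\Z_{\ge0}$ as soon as $c_i(\bm u)=c_i(\bm v)$ for every $i$. (The case $t=0$ is covered: the recession cone $\bigcap_i\{\langle\bm a_i,\cdot\rangle\le 0\}$ is $\{\bm 0\}$ since $P$ is a bounded full-dimensional polytope, so $0\cdot P+\bm v=\{\bm v\}$ agrees with the right-hand side of the displayed identity at $t=0$.)

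Next I would check that each $c_i$ is constant on every open cell $\sigma$ of $\Delta_P$. Such a $\sigma$ is a relatively open convex set, and by construction of $\Delta_P$ from $\A_P$, for each fixed $i$ and each $k\in\Z$ the cell $\sigma$ lies either entirely on $H_{\bm a_i,k}$ or entirely in one of its two open halfspaces. Consequently the image of the linear functional $\langle\bm a_i,\cdot\rangle$ on the convex set $\sigma$ is an interval which is either a single integer $k_0$ or disjoint from $\Z$; in the first case it equals $\{k_0\}\subset[k_0,k_0+1)$, and in the second case, being a connected subset of $\R\setminus\Z$, it is contained in a unique $(k_0,k_0+1)$. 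Either way $\lfloor\langle\bm a_i,\cdot\rangle\rfloor\equiv k_0$ on $\sigma$, as claimed.

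Finally I would descend to the torus. Note first that $\TL_{P,\bm v}$ depends only on $[\bm v]$, since for $\bm n\in\Z^d$ the translation $\bm z\mapsto\bm z+\bm n$ is a bijection between $(tP+\bm v)\cap\Z^d$ and $(tP+\bm v+\bm n)\cap\Z^d$. Now if $[\bm u]$ and $[\bm v]$ lie in a common open cell of $\Delta_P/\Z^d$, that cell is the image $\pi(\sigma)$ of some open cell $\sigma$ of $\Delta_P$ under $\pi:\R^d\to\R^d/\Z^d$, so by the very definition of the image there exist $\bm u',\bm v'\in\sigma$ with $\bm u'\equiv\bm u$ and $\bm v'\equiv\bm v\pmod{\Z^d}$. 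The previous step gives $c_i(\bm u')=c_i(\bm v')$ for all $i$, hence $\TL_{P,\bm u'}=\TL_{P,\bm v'}$ by the first step, and $\mathbb Z^d$-invariance yields $\TL_{P,\bm u}=\TL_{P,\bm u'}=\TL_{P,\bm v'}=\TL_{P,\bm v}$, which is the assertion. The argument is essentially direct; the one point that warrants care is this last descent. A cell of $\Delta_P$ need not map homeomorphically onto its image in $\R^d/\Z^d$ (it may wrap nontrivially), so one cannot produce the lifts $\bm u',\bm v'$ through a section; but one does not need to, because $\TL$ is translation invariant and lifts into a common cell $\sigma$ are handed over for free by the set-theoretic meaning of $[\bm u],[\bm v]\in\pi(\sigma)$.
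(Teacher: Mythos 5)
Your proof is correct, and while it rests on the same elementary mechanism as the paper's argument, it takes a noticeably more direct route. The shared heart of both proofs is the rounding observation: for an integer point the right-hand sides of the defining inequalities may be replaced by their integer roundings, and these roundings are constant on each open cell of $\Delta_P$. The paper, however, packages this at the level of the cone $\mathcal C_P\subset\mathbb R^{d+1}$: it proves (following Bruns, Proposition \ref{idealregion}) that $(\mathcal C_P+(\bm v,0))\cap\mathbb Z^{d+1}$ depends only on the vector of ceilings $\lceil(\bm a_i,\bm v)\rceil$, deduces that $\TL_{P,\cdot}$ is constant on the ``upper regions'' $\Lambda_P/\mathbb Z^d$ (Theorem \ref{main:tlpregion}), and finally obtains Theorem \ref{thm:1-2} by checking that $\Delta_P$ refines $\Lambda_P$. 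You instead apply the rounding directly to each dilate $tP+\bm v$, which bypasses the cone, the generating function, and the regions $\Lambda_P$ entirely and yields a shorter, self-contained proof of exactly the stated theorem; the trade-off is that you do not obtain the coarser (and strictly stronger) constancy statement on $\Lambda_P/\mathbb Z^d$, which the paper exploits later for the finiteness of $\{\TL_{P,\bm w}\}$ and for the link with conic divisorial ideals. Your handling of the two delicate points --- the case $t=0$, where $tP+\bm v$ degenerates to the single point $\bm v$ and the inequality description must be checked against the recession cone, and the descent to the torus, where you correctly avoid assuming that a cell of $\Delta_P$ injects into $\mathbb R^d/\mathbb Z^d$ by using only the translation invariance of $\TL$ and set-theoretic lifts into a common cell --- is exactly right.
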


For an open cell $C \in \Delta_P/\mathbb Z^d$,
define a quasi-polynomial $\TL_{P,C}$ by
\[
\TL_{P,C}=\TL_{P,\bm v} \ \ \ \mbox{ with }[\bm v ]\in C,
\]
which is well-defined by Theorem \ref{thm:1-2}.
Then \eqref{eq1-1} implies that the $k$th constituent of $\ehr_{P+\bm v}$ is 
the polynomial which appears as the $k$th constituent of $\TL_{P,C}$ with $[k \bm v] \in C$.
This provides us a way to compute $\ehr_{P+\bm v}$ for any $\bm v\in \mathbb Q^d$ from translated lattice points enumerators $\TL_{P,C}$.

Let us compute $\ehr_{T+(\frac {17} {100},\frac {52} {100})}(t)$ using this idea,
where $T$ is the trapezoid whose vertices are $(0,0),(1,0),(2,1)$ and $(0,1)$.
\begin{figure}
    \centering    \includegraphics[width=16cm,pagebox=cropbox]{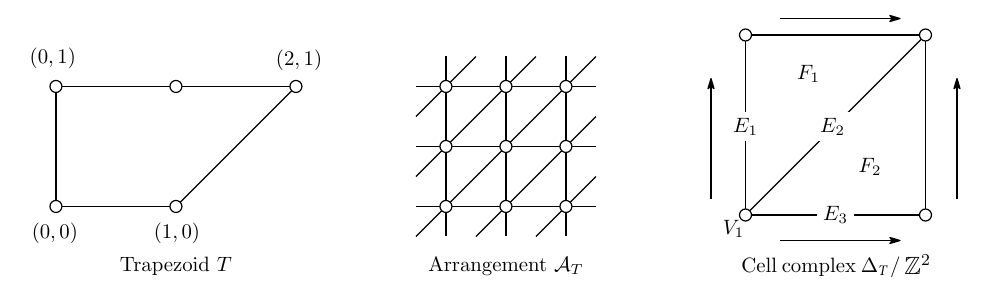}
    \caption{Trapezoid $T$, arrangement $\A_T$ and the cell complex $\Delta_T/\mathbb Z^2$ in $\mathbb R^2 /\mathbb Z^2$.}
    \label{trapezoid}
\end{figure}
Figure \ref{trapezoid} shows the arrangement $\A_T$ and the cell complex $\Delta_T/\mathbb Z^2$.
The complex $\Delta_T/\mathbb Z^2$ has two $2$-dimensional cells $F_1,F_2$,
three $1$-dimensional cells $E_1,E_2,E_3$ and one $0$-dimensional cell $V_1$ shown in Figure \ref{trapezoid}.
Since $T$ is a lattice polygon, each $\TL_{P,C}$ is a polynomial by McMullen's result, and here is a list of $\TL_{T,C}(t)$:
\begin{align} \label{eq1-2}
\begin{array}{lll}
    \TL_{T,F_1}(t)=\frac 3 2 t^2-\frac 1 2 t,\smallskip\\
    \TL_{T,F_2}(t)=\TL_{T,E_1}(t)=\TL_{T,E_2}(t)=\frac 3 2 t^2+\frac 1 2 t,\smallskip\\
    \TL_{T,E_3}(t)=\frac 3 2 t^2 + \frac 3 2 t,\smallskip\\
    \TL_{T,V_1}(t)= \frac 3 2 t^2 +\frac 3 2 t+1.
\end{array}
\end{align}
Also, for $k =0,1,2,\dots,99$, a computer calculation says
\begin{align} \label{eq1-3}
\left[ k \left( \frac {17} {100}, \frac {52} {100} \right) \right]
\in\begin{cases}
    V_1 & (\ k \equiv 0\ ),\\
    E_3 & (\ k \equiv 25,50,75\ ),\\
    E_2 & (\ k \equiv 20,40,60,80\ ),\\
    F_1 & 
    \left( {\footnotesize 
\begin{array}{ll}
k \equiv 1,3,6,7,9,12,13,15,18,19,21,23,24,26,30, \vspace{-3pt}\\
\ 32,36,38,42,44,48,49,53,55,59,61,65,66,67,\vspace{-3pt}\\
\ 69,71,72,73,78,83,84,86,89,90,92,95,96,98\end{array}} \right),\\
F_2 & \left( {\footnotesize 
\begin{array}{ll}
k \equiv 2,4,5,8,10,11,14,16,17,22,28,29,31,33,34, \vspace{-3pt}\\
\ 35,37,41,45,47,51,52,54,56,57,58,62,64,68,70,\vspace{-3pt}\\
\ 74,76,77,79,81,82,85,87,88,91,93,94,97,99\end{array}} \right).
\end{cases}
\end{align}
Since \eqref{eq1-1} says that the $k$th constituent of $\ehr_{P+\bm v}$ equals the $k$th constituent of $\TL_{P,k\bm v}$,
which equals $\TL_{P,C}$ with $[k \bm v] \in C \in \Delta_P/\mathbb Z^d$,
the equations \eqref{eq1-2} and \eqref{eq1-3} recover the formula of  $\ehr_{T+(\frac {17}{100}, \frac {52}{100})}(t)$ given at the beginning of this section.

As we will see,
the proof of Theorem \ref{thm:1-2} is somewhat straightforward,
and the way of computing $\ehr_{P+\bm v}(t)$ from $\TL_{P,C}(t)$ explained above may be considered as a kind of an observation rather than a new result.
But we think that this is a useful observation.
For example,
this way allows us to visualize how the constituents of $\ehr_{P+\bm v}$ change by plotting the points $[k \bm v]$ on $\mathbb R^d/\mathbb Z^d$.
Also, we can see why properties ($\alpha$), ($\beta$) and ($\gamma$) occur from this observation.
For the property $(\alpha)$,
we only see 4 polynomials in $\ehr_{T+(\frac {17}{100},\frac {52}{100})}$ because we have only 4 types of translated lattice point enumerators.
More generally, it can be shown that, if we fix a rational polytope $P$, then we can only have a finite number of polynomials as constituents of $\ehr_{P+\bm v}$ (Theorem \ref{finiteness}).
For the property ($\beta$),
the polynomials $\frac 3 2 t^2 \pm \frac 1 2 t$ appear many times simply because they are polynomials assigned to maximal dimensional cells of $\Delta_T/\mathbb Z^2$
(indeed, if we choose $\bm v$ randomly, then $[k\bm v]$ is likely to belong to a maximal dimensional cell).
Finally, we will see in Section 5 that the property ($\gamma$) can be figured out from the reciprocity of $\TL_{P,\bm v}$
(see Corollary \ref{cor:TLrecipro}).

\subsection{Second Result}
Recently real-valued extension of Ehrhart functions,
namely, the function $\ehr_{P}^{\mathbb R}:\mathbb R_{\geq 0} \to \mathbb Z_{\geq 0}$ given by $\ehr_{P}^{\mathbb R}(t)=\#(tP \cap \mathbb Z^d)$ for all $t \in \mathbb R_{\geq  0}$,
catch interests \cite{BBKV,BER,Linke,Royer1,Royer2}.
One surprising result on this topic is the following result of Royer \cite{Royer1,Royer2} proving that $\ehr_{P+ \bm v}^{\mathbb R}$ for all $\bm v \in \mathbb Z^d$ determines the polytope $P$.

\begin{theorem}[Royer]
\label{thm:Royer}
Let $P$ and $Q$ be rational polytopes in $\mathbb R^d$.
If $\ehr_{P+\bm v}^{\mathbb R}(t)=\ehr_{Q+\bm v}^{\mathbb R}(t)$
for all $\bm v \in \mathbb Z^d$ and $t \in \mathbb R_{\geq  0}$,
then $P=Q$.
\end{theorem}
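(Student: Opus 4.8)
The plan is to show that the family $(\ehr_{P+\bm v}^{\mathbb R})_{\bm v\in\mathbb Z^d}$ determines, for every rational point $\bm q\in\mathbb Q^d$, whether $\bm q\in P$. This suffices to conclude $P=Q$: a rational polytope satisfies $P=\overline{P\cap\mathbb Q^d}$ (rational convex combinations of the rational vertices of $P$ are dense in $P$), so $P\cap\mathbb Q^d=Q\cap\mathbb Q^d$ forces $P=Q$. For the reduction one must also check that $P\cap\mathbb Q^d\neq Q\cap\mathbb Q^d$ whenever $P\neq Q$: if, say, $P\not\subseteq Q$, then some facet-defining halfspace $\{\bm x:\bm a\cdot\bm x\le b\}$ of $Q$ fails to contain $P$, so $\max_{\bm x\in P}\bm a\cdot\bm x>b$, and a routine density argument inside the (rational) affine hull of $P$ produces a rational $\bm q\in P$ with $\bm a\cdot\bm q>b$, hence $\bm q\in P\setminus Q$.

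The key identity is that for $t>0$ and $\bm v\in\mathbb Z^d$,
\[
\ehr_{P+\bm v}^{\mathbb R}(t)=\#\big(t(P+\bm v)\cap\mathbb Z^d\big)=\#\Big\{\bm z\in\mathbb Z^d:\tfrac1t\bm z-\bm v\in P\Big\}=\#\Big(\big(\tfrac1t\mathbb Z^d-\bm v\big)\cap P\Big),
\]
so $\ehr_{P+\bm v}^{\mathbb R}(t)$ counts the points of the affine lattice $\tfrac1t\mathbb Z^d-\bm v$ lying in $P$. I would now fix $\bm q\in\mathbb Q^d$, write $\bm q=\bm a/N$ with $\bm a\in\mathbb Z^d$ and $N\in\mathbb Z_{>0}$, and, for each prime $L$ with $L\nmid N$, choose a solution $\bm v_L\in\mathbb Z^d$ of $N\bm v\equiv-\bm a\pmod L$ (possible since $\gcd(N,L)=1$). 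A direct computation gives $\tfrac1t\mathbb Z^d-\bm v_L=\bm q+\tfrac LN\mathbb Z^d$ for $t=N/L$; that is, this affine lattice passes through $\bm q$ and has spacing $L/N$. Since $P$ is bounded, for all $L$ larger than a bound depending only on $P$ and $\bm q$, no point $\bm q+\tfrac LN\bm m$ with $\bm m\in\mathbb Z^d\setminus\{\bm 0\}$ lies in $P$, whence
\[
\ehr_{P+\bm v_L}^{\mathbb R}(N/L)=\#\Big(\big(\bm q+\tfrac LN\mathbb Z^d\big)\cap P\Big)=\begin{cases}1&\text{if }\bm q\in P,\\0&\text{if }\bm q\notin P.\end{cases}
\]
Running the same argument with $Q$ in place of $P$ and invoking the hypothesis at $t=N/L$ gives $\ehr_{P+\bm v_L}^{\mathbb R}(N/L)=\ehr_{Q+\bm v_L}^{\mathbb R}(N/L)$ for all large such $L$, and therefore $\bm q\in P\iff\bm q\in Q$. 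As $\bm q\in\mathbb Q^d$ was arbitrary, $P\cap\mathbb Q^d=Q\cap\mathbb Q^d$, and the reduction finishes the proof.

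The only subtle points are the density argument producing a rational point of $P\setminus Q$ (standard) and the recognition that letting $L\to\infty$ through primes coprime to $N$ while simultaneously moving the translation vector to $\bm v_L$ turns $\ehr_{P+\bm v}^{\mathbb R}$ into an \emph{exact} local probe for membership of the single point $\bm q$: the coarseness of $\tfrac LN\mathbb Z^d$ for large $L$ is what isolates $\bm q$, and it also sidesteps any question about lattice points on $\partial P$. I expect the main obstacle to be organizing these parameter choices cleanly; once the displayed identity is in hand, the rest is bookkeeping.
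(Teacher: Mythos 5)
Your argument is correct. Note first that the paper does not prove this statement at all: it is quoted as Theorem \ref{thm:Royer} from Royer's preprints, so there is no internal proof to compare against, and your proposal has to stand on its own. It does. The displayed identity $\ehr^{\mathbb R}_{P+\bm v}(t)=\#\big((\tfrac1t\mathbb Z^d-\bm v)\cap P\big)$ is a correct change of variables; the congruence $N\bm v_L\equiv-\bm a\ (\mathrm{mod}\ L)$ is solvable because $\gcd(N,L)=1$, and it is exactly the condition needed for $\tfrac LN\mathbb Z^d-\bm v_L=\bm q+\tfrac LN\mathbb Z^d$; and for $L/N$ exceeding (diameter of $P\cup Q$) plus $\|\bm q\|$ plus a bound on $P\cup Q$, the affine lattice meets $P$ (resp.\ $Q$) only possibly at $\bm q$ itself, so the hypothesis at $(\bm v_L, N/L)$ yields $\bm q\in P\iff\bm q\in Q$. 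Since a rational polytope is the closure of its rational points, $P\cap\mathbb Q^d=Q\cap\mathbb Q^d$ gives $P=Q$ directly; your separate separating-halfspace discussion is therefore redundant but harmless. The statement allows lower-dimensional (even empty) $P,Q$, and your argument handles these without modification. What the approach buys is a short, completely elementary proof that uses the real parameter only at the single family of values $t=N/L$, making transparent why integer translates together with real dilations suffice to probe membership of each rational point, whereas the purely integral-dilation setting of the rest of the paper (Theorem \ref{thm:1-4}) can only recover $P$ up to integer translation.
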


Our second result is somewhat analogous to this result of Royer.
We prove that $\ehr_{P+\bm v}$ for all $\bm v \in \mathbb Q^d$ determines the polytope $P$ up to translations by integer vectors.

\begin{theorem}
\label{thm:1-4}
Let $P$ and $Q$ be rational $d$-polytopes in $\mathbb R^d$.
If $\ehr_{P+\bm v}(t)=\ehr_{Q+\bm v}(t)$
for all $\bm v \in \mathbb Q^d$ and all $t \in \mathbb Z_{\geq 0}$,
then $P=Q+\bm u$ for some $\bm u \in \mathbb Z^d$.
\end{theorem}

After we submitted the paper, we realized that Theorem \ref{thm:1-4} is not new and appears in the thesis of Alhajjar  in a bit stronger form \cite[Theorem 3.9]{alh}. But we keep the proof of Theorem \ref{thm:1-4} since we feel that our proof is more precise and some argument in the proof is also used to prove the third result.

\subsection{Third result}
The original motivation of this study actually comes from an attempt to generalize results of de Vries and Yoshinaga in \cite{DY:2021},
who found a connection between symmetries on constituents of $\ehr_{P+\bm v}$ and geometric symmetries of $P$.
Indeed, the following result is one of the main results in \cite{DY:2021}.
We say that a quasi-polynomial $f$ is {\bf symmetric} if the $k$th constituent of $f$ equals the $(-k)$th constituent of $f$ for all $k \in \mathbb Z$.
Also, a convex polytope $P \subset \mathbb R^d$ is said to be {\bf centrally symmetric}  if $P=-P+\bm x$ for some $\bm x \in \mathbb R^d$.

\begin{theorem}[de Vries--Yoshinaga]
\label{thm:deVriesYoshinaga}
Let $P \subset \mathbb R^d$ be a lattice $d$-polytope.
The following conditions are equivalent.
\begin{itemize}
    \item[(1)] $\ehr_{P+\bm v}$ is symmetric for all $\bm v \in \mathbb Q^d$.
    \item[(2)] $P$ is centrally symmetric. 
\end{itemize}
\end{theorem}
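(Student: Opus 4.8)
The plan is to use the constituent formula \eqref{eq1-1} to recast everything in terms of the translated lattice point enumerators $\TL_{P,\bm w}$. First I would record the basic point that, since $P$ is a \emph{lattice} polytope, McMullen's theorem applies with $q=1$, so every $\TL_{P,\bm w}$ is an honest polynomial; hence by \eqref{eq1-1} the $k$th constituent of $\ehr_{P+\bm v}$ is precisely the polynomial $\TL_{P,k\bm v}$, for all $k\in\mathbb Z$ and $\bm v\in\mathbb Q^d$. Consequently, ``$\ehr_{P+\bm v}$ is symmetric for every $\bm v\in\mathbb Q^d$'' is equivalent to ``$\TL_{P,k\bm v}=\TL_{P,-k\bm v}$ for all $k\in\mathbb Z$ and $\bm v\in\mathbb Q^d$'', and since $k\bm v$ sweeps out all of $\mathbb Q^d$ this is in turn equivalent to the single condition
\[
\TL_{P,\bm w}(t)=\TL_{P,-\bm w}(t)\qquad\text{for all }\bm w\in\mathbb Q^d\text{ and }t\in\mathbb Z_{\ge 0};
\]
denote this condition by $(\star)$. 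So the theorem reduces to proving that $(\star)$ holds if and only if $P$ is centrally symmetric.

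For the implication $(2)\Rightarrow(1)$, suppose $P=-P+\bm u$. Since $P$ is a lattice polytope and each vertex of $-P+\bm u$ has the form $\bm u-\bm p$ for a vertex $\bm p$ of $P$, I first observe that $\bm u\in\mathbb Z^d$. Then, for $t\in\mathbb Z_{\ge0}$ we have $tP=-tP+t\bm u$ with $t\bm u\in\mathbb Z^d$, so translating by the integer vector $-t\bm u$ and then applying the bijection $\bm x\mapsto-\bm x$ of $\mathbb Z^d$ gives
\[
\TL_{P,\bm w}(t)=\#\big((tP+\bm w)\cap\mathbb Z^d\big)=\#\big((-tP+\bm w)\cap\mathbb Z^d\big)=\#\big((tP-\bm w)\cap\mathbb Z^d\big)=\TL_{P,-\bm w}(t),
\]
which is exactly $(\star)$.

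For $(1)\Rightarrow(2)$, assume $(\star)$. Using $\ehr_{P+\bm v}(t)=\#\big((tP+t\bm v)\cap\mathbb Z^d\big)=\TL_{P,t\bm v}(t)$ and applying $(\star)$ with the shift $\bm w=t\bm v$, I get $\ehr_{P+\bm v}(t)=\TL_{P,-t\bm v}(t)=\#\big((tP-t\bm v)\cap\mathbb Z^d\big)$, and applying $\bm x\mapsto-\bm x$ once more this equals $\#\big(t(-P+\bm v)\cap\mathbb Z^d\big)=\ehr_{(-P)+\bm v}(t)$. Hence $\ehr_{P+\bm v}=\ehr_{(-P)+\bm v}$ for all $\bm v\in\mathbb Q^d$ and all $t\in\mathbb Z_{\ge0}$. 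Since $-P$ is again a rational (in fact lattice) $d$-polytope, Theorem \ref{thm:1-4} applies and yields $P=(-P)+\bm u$ for some $\bm u\in\mathbb Z^d$, i.e.\ $P$ is centrally symmetric.

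The bijection-and-translation bookkeeping in the last two paragraphs is routine; the genuine obstacle is the final step, namely converting the coincidence of \emph{all} shifted Ehrhart quasi-polynomials of $P$ and $-P$ into an honest equality of polytopes. In the plan above this is handled by citing Theorem \ref{thm:1-4}; if one were not allowed to use it, one would instead have to recover $P$ (or at least detect its central symmetry) directly from the family $\{\TL_{P,\bm w}\}_{\bm w\in\mathbb Q^d}$ --- for example by examining $\TL_{P,\bm w}$ for $\bm w$ lying just outside $P$ near each vertex in order to read off the support function of $P$ --- which amounts to re-proving Theorem \ref{thm:1-4} and is really the crux of the matter.
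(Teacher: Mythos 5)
Your argument is correct, but note that the paper does not actually prove Theorem \ref{thm:deVriesYoshinaga} at all: it is quoted as a known result of de Vries--Yoshinaga from \cite{DY:2021} and then used as an ingredient (via Corollary \ref{projsym}(i)) in the proof of the paper's own generalization, Theorem \ref{thm:7.5}. So your proof is necessarily a different route, and it is a legitimate one. Your reduction of ``symmetric for all $\bm v$'' to the condition $\TL_{P,\bm w}=\TL_{P,-\bm w}$ for all $\bm w$ is exactly Lemma \ref{symcriteiron} specialized to the lattice case (where every $\TL_{P,\bm w}$ is a polynomial), your $(2)\Rightarrow(1)$ direction is the same elementary bookkeeping as Lemma \ref{lem:cs} together with the $(\mathrm{ii})\Rightarrow(\mathrm{i})$ part of Theorem \ref{thm:7.5}, and your $(1)\Rightarrow(2)$ direction cleverly rewrites $\TL_{P,-t\bm v}(t)$ as $\ehr_{(-P)+\bm v}(t)$ so that Theorem \ref{thm:1-4} (i.e.\ Theorem \ref{thm:6-1}) delivers $P=-P+\bm u$ with $\bm u\in\mathbb Z^d$. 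Crucially, this is not circular: the proof of Theorem \ref{thm:6-1} in Section 6 rests only on Minkowski's theorem, Lemma \ref{lem:codim1} and the projection Propositions \ref{7.6} and \ref{7.6*}, and nowhere invokes the de Vries--Yoshinaga theorem. What your approach buys is therefore a self-contained derivation of Theorem \ref{thm:deVriesYoshinaga} from the present paper's reconstruction theorem, which would let one replace the citation to \cite{DY:2021} in Corollary \ref{projsym}(i); the cost, as you correctly identify, is that all the real work is outsourced to Theorem \ref{thm:1-4}, whose proof is the genuinely hard content.
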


As posed in \cite[Problem 6.7]{DY:2021},
it is natural to ask if there is a generalization of this result for rational polytopes.
Theorem \ref{thm:deVriesYoshinaga} actually proves that,
if a rational polytope $P$ satisfies the property (1) of the above theorem, then $P$ must be centrally symmetric (see Corollary \ref{projsym}).
We generalize Theorem \ref{thm:deVriesYoshinaga} in the following form.

\begin{theorem}
    \label{thm:1-6}
Let $P \subset \mathbb R^d$ be a rational $d$-polytope.
The following conditions are equivalent.
\begin{itemize}
    \item[(1)] $\ehr_{P+\bm v}$ is symmetric for all $\bm v \in \mathbb Q^d$.
    \item[(2)] $P$ is centrally symmetric and $2(P-\bm c)$ is integral, where $\bm c$ is the center of symmetry of $P$. 
\end{itemize}
\end{theorem}

\subsection*{Organization of the paper}
This paper is organized as follows:
We first quickly review basic known properties of Ehrhart quasi-polynomials and translated lattice points enumerators in Section 2.
In Section 3,
we study translated lattice points enumerators using arrangement $\A_P$ and prove Theorem \ref{thm:1-2}.
Then, after seeing two examples in Section 4,
we discuss a reciprocity of translated lattice points enumerators on maximal cells of $\Delta_P/\mathbb Z^d$ in Section 5.
In Section 6, we prove that translated lattice point enumerators determine the polytope $P$ up to translations by integer vectors.
In Section 7, we study translated lattice points enumerators of polytopes with some symmetry,
in particular, prove Theorem \ref{thm:1-6}.
In Section 8,
we discuss a connection to commutative algebra,
more precisely, we discuss a connection between translated lattice points enumerators and conic divisorial ideals in Ehrhart rings.
We list a few problems which we cannot solve in the last section 9.

\subsection*{Acknowledgements}
We thank Katharina Jochemko for letting us know McMullen's work in \cite{McMullen}, 
Matthias Beck for letting us know the work of Royer in \cite{Royer1,Royer2}, 
and Elie Alhajjar for informing us his Ph.D. Thesis \cite{alh} which contains 
closely related results. 
We also thank the anonymous
reviewers for helpful comments and suggestions.
The first author is partially supported by KAKENHI 20K03513 and 21KK0043,
the second author is partially supported by KAKENHI 21K0319 and 21H00975,
and the third author is partially supported by KAKENHI 18H01115 and 23H00081.

\section{Ehrhart quasi-polynomials and translated lattice point enumerators}

In this section,
we recall basic results on Ehrhart quasi-polynomials and explain a connection between Ehrhart quasi-polynomials of translated polytopes and translated lattice point enumerators.

\subsection{Ehrhart quasi-polynomial}
We quickly recall Ehrhart's theorems.
We refer the readers to \cite{BR:2007,Gru:book,Z} for basics on convex polytopes.
A {\bf convex polytope} $P$ in $\mathbb R^d$ 
is a convex hull of finitely many points in $\mathbb R^d$.
The {\bf dimension} of a polytope $P$ is the dimension of its affine hull.
A $k$-dimensional convex polytope will be simply called a {\bf $k$-polytope} in this paper.
A convex polytope $P$ is said to be {\bf integral} (resp.\ {\bf rational}) if all the vertices of $P$ are lattice points (resp.\ rational points).
The \textbf{denominator} of a rational polytope $P$ is the smallest integer $k>0$ such that $kP$ is integral.
The following result is a fundamental result in Ehrhart theory. See \cite[Theorems 3.23 and 4.1]{BR:2007}.

\begin{theorem}[Ehrhart]
Let $P \subset \mathbb R^d$ be a rational polytope and $q$ the denominator of $P$.
Then the function $\ehr_P: \mathbb Z_{\geq 0} \to \mathbb R$ defined by
\[
\ehr_P(t)= \# (t P \cap \mathbb Z^d)
\]
is a quasi-polynomial with period $q$.
\end{theorem}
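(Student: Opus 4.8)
The plan is to run the classical Ehrhart-series argument: pass to the cone over $P$, cut it into rational simplicial subcones, compute the lattice-point generating function of each such cone as a rational function whose denominator is a power of $1-x^q$, and then read off the coefficients.

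First I would reduce to a single rational simplicial cone. Form the cone $C=\cone(P)=\{\lambda(\bm x,1):\lambda\ge0,\ \bm x\in P\}\subset\mathbb R^{d+1}$. Since $P$ sits at height $1$, the slice of $C$ at height $t$ is a copy of $tP$, so
\[
\ehr_P(t)=\#\big(C\cap\mathbb Z^{d+1}\cap\{x_{d+1}=t\}\big)\qquad(t\ge0),
\]
the case $t=0$ giving $1$ on both sides because the only point of $C$ at height $0$ is the origin. Choosing a triangulation of $P$ whose vertex set is the vertex set of $P$ induces a decomposition $C=C_1\cup\cdots\cup C_s$ into simplicial cones meeting along common faces, and every cone occurring is spanned by vectors $w=(\bm v,1)$ with $\bm v$ a vertex of $P$, so that $qw\in\mathbb Z^{d+1}$. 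Inclusion–exclusion applied to the finite sets $C_i\cap\mathbb Z^{d+1}\cap\{x_{d+1}=t\}$ then expresses $\ehr_P$ as a fixed $\mathbb Z$-linear combination of the height-counting functions of rational simplicial cones $\sigma=\cone(w_1,\dots,w_n)$ with each $w_i=(\bm v_i,1)$, $q\bm v_i\in\mathbb Z^d$, and this identity is valid for every $t\ge0$. As a $\mathbb Z$-linear combination of quasi-polynomials of period $q$ is again one, it suffices to treat a single such $\sigma$.

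Second, I would count the lattice points of $\sigma$ by height. Because $w_1,\dots,w_n$ are linearly independent and each $qw_i$ is a lattice point of height $q$, every $z\in\sigma\cap\mathbb Z^{d+1}$ has a unique expression $z=r+\sum_i n_i(qw_i)$ with $n_i\in\mathbb Z_{\ge0}$ and $r$ in the finite "fundamental parallelepiped" set $R=\sigma\cap\mathbb Z^{d+1}\cap\{\sum_i\mu_iw_i:0\le\mu_i<q\}$. Summing $x^{\text{height}}$ over this disjoint decomposition yields
\[
\sum_{t\ge0}\#\big(\sigma\cap\mathbb Z^{d+1}\cap\{x_{d+1}=t\}\big)\,x^t=\frac{h(x)}{(1-x^q)^{n}},\qquad h(x)=\sum_{r\in R}x^{\deg r},
\]
where each $w_i$ has height $1$, so $\deg r=\sum_i\mu_i<qn$ for $r\in R$, giving $\deg h\le qn-1$, strictly below the degree $qn$ of the denominator. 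Finally I would extract coefficients: expanding $(1-x^q)^{-n}=\sum_{m\ge0}\binom{m+n-1}{n-1}x^{qm}$ and writing $h=\sum_jh_jx^j$ gives, for $t\equiv a\pmod q$,
\[
\#\big(\sigma\cap\mathbb Z^{d+1}\cap\{x_{d+1}=t\}\big)=\sum_{j\equiv a\ (q)}h_j\binom{(t-j)/q+n-1}{n-1}.
\]
The bound $\deg h\le qn-1$ forces the terms with $j>t$ to vanish — for such $j$ one checks $(t-j)/q$ is an integer in $\{-1,\dots,-(n-1)\}$, so the binomial coefficient is $0$ — hence the identity holds for all $t\ge0$, not merely eventually. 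For each fixed residue $a$ the right-hand side is a polynomial in $t$ of degree $\le n-1\le d$, so this count is a quasi-polynomial of period $q$; by the first step, so is $\ehr_P$.

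The step I expect to be the main obstacle is the second one: establishing cleanly that $\sigma\cap\mathbb Z^{d+1}$ is the disjoint union $\bigsqcup_{r\in R}\big(r+\sum_i\mathbb Z_{\ge0}\,qw_i\big)$ over the finite set $R$, and carrying along a degree bound on $h$ sharp enough that the coefficient formula is valid for all $t\ge0$ and not just for large $t$. The triangulation/inclusion–exclusion reduction and the power-series extraction are routine bookkeeping once this structural fact is in place.
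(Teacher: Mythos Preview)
The paper does not supply its own proof of this theorem; it is quoted as background and the reader is referred to \cite[Theorems 3.23 and 4.1]{BR:2007}. Your argument is correct and is precisely the classical cone/generating-function proof found in that reference: cone over $P$, triangulate using the vertices of $P$, compute the integer-point transform of each simplicial cone via the half-open parallelepiped spanned by the $qw_i$, and extract coefficients. The degree check you flag (that $j>t$ forces $(t-j)/q\in\{-1,\dots,-(n-1)\}$, so the extraneous binomials vanish and the formula holds for all $t\ge 0$) is exactly the point one has to verify, and your bound $\deg h\le qn-1$ handles it. So there is nothing to compare: you have reproduced the standard proof the paper is citing.
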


As we noted in the Introduction,
we regard $\ehr_P$ as a function from $\mathbb Z$ to $\mathbb R$ by identifying it with the corresponding quasi-polynomial $f: \mathbb Z \to \mathbb R$ that coincides with $\ehr_P$ on $\mathbb Z_{> 0}$.
Thus, if $q$ is a period of $f$, then for a positive integer $t>0$ we set $\ehr_P(-t)=f_k(-t)$, where $f_k$ is the $k$th constituent of $f$ with $-t \equiv k$ (mod $q$).
The quasi-polynomial $\ehr_P$ is called the {\bf Ehrhart quasi-polynomial} of $P$.

The following reciprocity result is another important result in Ehrhart theory.

\begin{theorem}[Ehrhart reciprocity]
Let $P \subset \mathbb R^d$ be a rational $d$-polytope.
Then 
$$\# \big(\mathrm{int}(tP)\cap \mathbb Z^d \big) = (-1)^d \ehr_P(-t) \ \ \mbox{ for }\  t \in \mathbb Z_{>0}.$$
\end{theorem}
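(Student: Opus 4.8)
The plan is to deduce the statement from Stanley's reciprocity for rational polyhedral cones, via the usual coning construction. Place $P$ at height $1$ and form the pointed rational $(d+1)$-dimensional cone
\[
\cone(P)=\{\lambda(\xb,1)\mid \xb\in P,\ \lambda\in\R_{\ge0}\}\subset\R^{d+1}.
\]
Slicing by $\{x_{d+1}=t\}$ returns $tP\times\{t\}$ from $\cone(P)$ and $\inte(tP)\times\{t\}$ from $\operatorname{relint}(\cone(P))$ for $t>0$ (and nothing at $t=0$, since the cone is pointed). So, writing $\sigma_K(z)=\sum_{\bm m\in K\cap\Z^{d+1}}z^{m_{d+1}}$ for the integer-point generating function in the last coordinate, we get
\[
\sigma_{\cone(P)}(z)=\sum_{t\ge0}\ehr_P(t)\,z^t=:R(z),\qquad \sigma_{\operatorname{relint}(\cone(P))}(z)=\sum_{t\ge1}\#\big(\inte(tP)\cap\Z^d\big)\,z^t.
\]
By Ehrhart's theorem $\ehr_P$ is a quasi-polynomial of degree $d$, so $R(z)$ is a proper rational function whose poles are roots of unity; the same holds for the second series.

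Two inputs are needed. First, \emph{Stanley reciprocity for cones}: for a pointed rational $n$-dimensional cone $K$ one has $\sigma_{\operatorname{relint}(K)}(\mathbf z)=(-1)^{n}\sigma_{K}(1/\mathbf z)$ as rational functions. Second, the \emph{quasi-polynomial reciprocity lemma}: if $g$ is a quasi-polynomial and $R(z)=\sum_{t\ge0}g(t)z^t$ as a rational function, then the power-series expansion of $R(1/w)$ around $w=0$ equals $-\sum_{t\ge1}g(-t)w^t$. The second input is routine: by partial fractions it reduces to $g(t)=\binom{t+j}{j}\zeta^t$ for a root of unity $\zeta$, a direct check (for $g\equiv1$, $R(z)=1/(1-z)$ and $R(1/w)=-w/(1-w)=-\sum_{t\ge1}w^t$). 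The first input is the real content; I would prove it by fixing a triangulation of $K$, refining it into a disjoint \emph{half-open} decomposition into simplicial cones, using that both $\sigma_K$ and $\sigma_{\operatorname{relint}(K)}$ are additive over such a decomposition, and then verifying the identity on a single half-open simplicial cone, where $\sigma$ is an explicit ratio $\frac{(\text{sum over a half-open fundamental parallelepiped})}{\prod_i(1-z^{c_i})}$ on which $z\mapsto 1/z$ acts transparently.

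Combining these, applying Stanley reciprocity to $K=\cone(P)$ (so $n=d+1$) gives $\sigma_{\operatorname{relint}(\cone(P))}(z)=(-1)^{d+1}R(1/z)$ as rational functions, and the quasi-polynomial lemma identifies the expansion of $R(1/z)$ with $-\sum_{t\ge1}\ehr_P(-t)z^t$. Therefore
\[
\sum_{t\ge1}\#\big(\inte(tP)\cap\Z^d\big)\,z^t=(-1)^{d}\sum_{t\ge1}\ehr_P(-t)\,z^t,
\]
and comparing the coefficient of $z^t$ for each $t\in\Z_{>0}$ yields $\#(\inte(tP)\cap\Z^d)=(-1)^d\ehr_P(-t)$. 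The main obstacle is the cone reciprocity input — specifically, arranging the half-open decomposition so that each simplicial piece has exactly the right portion of its boundary removed, and checking additivity of $\sigma$ over it; everything after that is formal. (Alternatively one can bypass cones by reducing to relatively open rational simplices via a triangulation and face decomposition and verifying the simplex case by a direct counting argument, but the combinatorial core is the same.)
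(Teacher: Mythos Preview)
The paper does not actually prove this statement: Ehrhart reciprocity is quoted in Section~2 as a standard background result, with an implicit pointer to Beck--Robins. Your argument via the coning construction, Stanley reciprocity for rational cones, and the rational-function reciprocity lemma for quasi-polynomial generating series is precisely the standard textbook proof and is correct as outlined; the only substantive step, as you note, is the half-open simplicial decomposition underlying cone reciprocity.

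That said, the paper does contain an \emph{implicit} alternative proof: Section~8 derives McMullen's reciprocity (Theorem~\ref{thm:McMullen}(2)) algebraically, by identifying $I_{\bm v}^\circ$ with the canonical module of the conic divisorial ideal $I_{-\bm v}$ and invoking the Hilbert-series duality for Cohen--Macaulay modules (Proposition~\ref{prop:hilbseries}(1)); specializing to $\bm v=\bm 0$ recovers Ehrhart reciprocity. So where you reduce to a combinatorial identity on half-open simplicial cones, the paper's route replaces that with the Cohen--Macaulay property of conic divisorial ideals and the known formula $\Hilb(\omega_M,z)=(-1)^{\dim M}\Hilb(M,z^{-1})$. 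Your approach is self-contained and elementary; the paper's buys the more general translated-vector reciprocity in one stroke but imports the commutative-algebra machinery.
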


\subsection{Translated lattice points enumerator}
Recall that,
for a convex set $X \subset \mathbb R^d$ and $\bm v \in \mathbb R^d$,
the {\bf translated lattice points enumerator} of $X$ w.r.t.\ $\bm v$ is the function
$\TL_{X,\bm v}$ defined by
\begin{align}
\label{def1}
\TL_{X,\bm v}(t)=\# \big( (tX + \bm v) \cap \mathbb Z^d \big)
\ \ \mbox{ for }t \in \mathbb Z_{> 0}.
\end{align}
When $X$ is a polytope $P$,
we actually consider that $\TL_{P,\bm v}$ is a function from $\mathbb Z_{\geq 0}$ to $\mathbb R$ by
setting
$\TL_{P,\bm v}(0)=\# (\{\bm v\} \cap \mathbb Z^{d})$.
Thus $\TL_{P,\bm v}(0)=1$ if $\bm v \in \mathbb Z^d$ and
$\TL_{P,\bm v}(0)=0$ if $\bm v \not \in \mathbb Z^{d}$.
In this way, we may consider that $\TL_{P,\bm v}$ is a counting function of lattice points in the translated cone.
Indeed,
for a convex polytope $P \subset \mathbb R^d$, if we write $\mathcal C_P$ for the cone generated by $\{(\bm x,1)\mid \bm x \in P\}$,
Then we have
\[
\TL_{P,\bm v}(t)=\# \big(\big( \mathcal C_P +(\bm v,0) \big)\cap H_{x_{d+1}=t} \big)\cap \mathbb Z^{d+1} \ \ \mbox{ for } t \geq 0,\]
where $H_{x_{d+1}=t}=\{(x_1,\dots,x_{d+1})\mid x_{d+1}=0\}$.

McMullen \cite[\S 4]{McMullen} proved the following generalization of Ehrhart's results.
(We will give an algebraic proof of this theorem later in section 8.)

\begin{theorem}[McMullen]
\label{thm:McMullen}
Let $P \subset \mathbb R^d$ be a rational $d$-polytope and $q$ the denominator of $P$.
Then
\begin{itemize}
    \item[(1)] For any $\bm v \in \mathbb R^d$,
    the function $\TL_{P,\bm v}$ is a quasi-polynomial with period $q$.
    \item[(2)] For any $\bm v \in \mathbb R^d$, one has
    \[
    \TL_{\mathrm{int}(P),\bm v}(t)=(-1)^d \TL_{P,-\bm v}(-t)
    \ \ \mbox{ for $t \in \mathbb Z_{>0}$.}\]
\end{itemize}
\end{theorem}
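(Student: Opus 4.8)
The plan is to pass to the cone over $P\times\{1\}$ and reduce both assertions to standard facts about rational generating functions of polyhedra. Set $C=\mathrm{cone}(P)=\{\lambda(\bm x,1):\lambda\ge 0,\ \bm x\in P\}\subset\mathbb R^{d+1}$; since $P$ is a rational $d$-polytope, $C$ is a line-free rational polyhedral cone of dimension $d+1$. The elementary observation I would start from is that, for $\bm y\in\mathbb Z^d$ and $t\in\mathbb Z_{\ge 0}$, one has $\bm y\in tP+\bm v$ iff $(\bm y,t)\in C+(\bm v,0)$, so that
\[
\TL_{P,\bm v}(t)=\#\bigl((C+(\bm v,0))\cap(\mathbb Z^d\times\{t\})\bigr),
\]
and likewise $\TL_{\mathrm{int}(P),\bm v}(t)=\#\bigl((\mathrm{int}(C)+(\bm v,0))\cap(\mathbb Z^d\times\{t\})\bigr)$, using that for $t>0$ the slice of $\mathrm{int}(C)$ at height $t$ is $t\,\mathrm{int}(P)\times\{t\}$. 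Writing $\sigma_Q(\bm x)=\sum_{\bm m\in Q\cap\mathbb Z^{d+1}}\bm x^{\bm m}$ for the lattice-point generating function of a rational polyhedron $Q\subseteq\mathbb R^{d+1}$, this says
\[
F_{P,\bm v}(z):=\sum_{t\ge 0}\TL_{P,\bm v}(t)\,z^{t}=\sigma_{C+(\bm v,0)}(1,\dots,1,z),
\]
and $G_{P,\bm v}(z):=\sum_{t\ge 1}\TL_{\mathrm{int}(P),\bm v}(t)\,z^{t}=\sigma_{\mathrm{int}(C)+(\bm v,0)}(1,\dots,1,z)$.

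For part (1) I would triangulate $C$ into simplicial subcones using only its extreme rays (the rays through the vertices of $P\times\{1\}$), refine this to a decomposition $C=\bigsqcup_j C_j$ into \emph{half-open} simplicial cones, and take the generator of the ray through a vertex $\bm w$ of $P$ to be $q(\bm w,1)\in\mathbb Z^{d+1}$, whose last coordinate is $q$. For a half-open simplicial cone with integer generators $\bm h_0,\dots,\bm h_d$ and a shift $\bm a\in\mathbb R^{d+1}$, the usual fundamental-parallelepiped decomposition gives $\sigma_{C_j+\bm a}(\bm x)=\bigl(\sum_{\bm p\in\Pi_j}\bm x^{\bm p}\bigr)\big/\prod_{k=0}^{d}(1-\bm x^{\bm h_k})$ with $\Pi_j$ a finite set of lattice points in a translate of the fundamental parallelepiped. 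Specializing $\bm x=(1,\dots,1,z)$ turns each $\bm x^{\bm h_k}$ into $z^{q}$, so summing over $j$ yields $F_{P,\bm v}(z)=g_{\bm v}(z)/(1-z^{q})^{d+1}$ with $g_{\bm v}$ a polynomial. The standard dictionary between rational functions with denominator $(1-z^{q})^{d+1}$ and quasi-polynomials of period $q$ and degree $\le d$ (cf.\ \cite{BR:2007}) then shows $\TL_{P,\bm v}$ agrees with such a quasi-polynomial for $t\gg 0$, and in fact for all $t\ge 0$ once $\deg g_{\bm v}<(d+1)q$; the only way a monomial of degree $(d+1)q$ can occur is from a lattice point all of whose barycentric coordinates equal $1$, which forces $\bm v\in\mathbb Z^d$, and in that case $\TL_{P,\bm v}=\ehr_P$ and Ehrhart's theorem applies directly. (This is the generating-function version of the statement; McMullen's own proof instead exploits that $P\mapsto\#(P\cap\mathbb Z^d)$ is a valuation on rational polytopes together with the structure of such valuations under dilation.)

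For part (2) the key input is Stanley reciprocity for a line-free rational cone $C$ of dimension $n$ with a real shift $\bm a$, namely $\sigma_{C+\bm a}(\bm x^{-1})=(-1)^{n}\sigma_{\mathrm{int}(C)-\bm a}(\bm x)$ as rational functions. For a simplicial cone this is immediate: passing to $\bm x^{-1}$ replaces $\prod_k(1-\bm x^{-\bm h_k})$ by $(-1)^{n}\bm x^{-\sum_k\bm h_k}\prod_k(1-\bm x^{\bm h_k})$, and $\bm p\mapsto\sum_k\bm h_k-\bm p$ is a bijection from the fundamental parallelepiped of $C+\bm a$ onto that of $\mathrm{int}(C)-\bm a$; the general case then follows since $\sigma$ is a valuation and the half-open decomposition of part (1) is compatible with passing to interiors. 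Applying this with $n=d+1$ and $\bm a=(-\bm v,0)$ and specializing $\bm x=(1,\dots,1,z)$ gives $F_{P,-\bm v}(1/z)=(-1)^{d+1}G_{P,\bm v}(z)$. On the other hand, for any quasi-polynomial $f$ with $F(z)=\sum_{t\ge 0}f(t)z^{t}$ one has $\sum_{t\ge 1}f(-t)z^{t}=-F(1/z)$, because the ``positive'' and ``negative'' halves of $\sum_{t\in\mathbb Z}\zeta^{t}z^{t}$ (for $\zeta$ a root of unity) are rational functions summing to $0$, and every period-$q$ quasi-polynomial is a combination of $t^{k}\zeta^{t}$ with $\zeta^{q}=1$. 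Applying this to $f=\TL_{P,-\bm v}$ and combining with the previous identity gives
\[
G_{P,\bm v}(z)=(-1)^{d+1}F_{P,-\bm v}(1/z)=(-1)^{d}\sum_{t\ge 1}\TL_{P,-\bm v}(-t)\,z^{t},
\]
and comparing coefficients of $z^{t}$ for $t\ge 1$ yields $\TL_{\mathrm{int}(P),\bm v}(t)=(-1)^{d}\TL_{P,-\bm v}(-t)$.

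I expect the bookkeeping in part (1) to be routine, so the real obstacle is the reciprocity statement used in part (2): proving $\sigma_{C+\bm a}(\bm x^{-1})=(-1)^{\dim C}\sigma_{\mathrm{int}(C)-\bm a}(\bm x)$ for non-simplicial cones and arbitrary real shifts, which is exactly where one must either arrange a half-open decomposition that behaves well under taking interiors or else invoke the valuation/polyhedral-algebra formalism that underlies McMullen's paper. A secondary nuisance, already visible above, is pinning down the precise range of $t$ (``all $t\ge 0$'', resp.\ ``all $t>0$'') rather than merely ``$t\gg 0$'', which is what forces the degree estimate on $g_{\bm v}$ and the separate treatment of integer $\bm v$.
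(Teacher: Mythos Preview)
Your argument is sound and follows the classical generating-function route: cone over $P$, half-open simplicial decomposition, Stanley-type reciprocity for shifted cones. The paper, by contrast, does not prove Theorem~\ref{thm:McMullen} in Section~2 at all (it is simply attributed to McMullen there); the proof the paper supplies is the algebraic one in Section~8, via Proposition~\ref{prop:hilbseries}. There the authors identify $\TL_{P,\bm v}$ with the Hilbert function of the conic divisorial ideal $I_{(\bm v,0)}$, invoke the Cohen--Macaulay property of $I_{\bm v}$ over the Ehrhart ring to get $\Hilb(I_{\bm v},z)=z^{\alpha}Q(z)/(1-z^{q})^{d+1}$ with $Q\in\mathbb Z_{\ge 0}[z]$, and obtain reciprocity from the canonical-module duality $\Hom_{\mathbb F[P]}(I_{\bm v},\omega)\cong I_{-\bm v}^{\circ}$. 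The degree bound $\deg Q<q(d+1)$---exactly the point you flag as a ``secondary nuisance''---is then deduced \emph{from} the reciprocity formula, not from a parallelepiped count.

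So the two proofs share the same skeleton (rationality of the series with denominator $(1-z^q)^{d+1}$, plus a duality yielding reciprocity) but differ in where the heavy lifting is outsourced: you appeal to Stanley reciprocity for real-shifted cones and half-open triangulations, while the paper appeals to Cohen--Macaulayness and the Matlis/canonical-module formalism from \cite{BH,BG:book}. Your route is more self-contained combinatorially and avoids any commutative algebra, at the cost of the bookkeeping you correctly identify (compatibility of the half-open decomposition with passing to interiors, and the degree estimate on $g_{\bm v}$). The paper's route gets the degree bound and the nonnegativity of the numerator for free from Cohen--Macaulayness, but presupposes that machinery. Your case split on $\bm v\in\mathbb Z^d$ for the degree bound is a little ad hoc; note that the paper instead bounds the degree by applying reciprocity to $I_{-\bm v}^{\circ}$ and reading off where its Hilbert series starts, which you could mimic once part~(2) is in hand and thereby avoid the separate treatment.
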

We remark that $\bm v$ is not necessarily a rational point in the above theorem.
Also the theorem says that, if $P$ is integral, then the function $\TL_{P,\bm v}$ is a polynomial.

The following connection between Ehrhart quasi-polynomials of translated polytopes and translated lattice points enumerators, which essentially appeared in \cite[Corollary 3.4]{DY:2021}, is fundamental in the rest of this paper.

\begin{lemma}
\label{ehrToTrans}
Let $P \subset \mathbb R^d$ be a rational $d$-polytope
and $\bm v \in \mathbb Q^d$.
For all $k \in \mathbb Z$,
one has
\[
\mbox{\rm the $k$th constituent of $\ehr_{P+\bm v}$}=
\mbox{\rm the $k$th constituent of $\TL_{P,k\bm v}$}.
\]
\end{lemma}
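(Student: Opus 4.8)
\textbf{Proof proposal for Lemma \ref{ehrToTrans}.}
The plan is to unwind both sides to a common lattice-point count. First I would fix $k \in \mathbb Z$ and a common period: by Theorem \ref{thm:McMullen}, $\TL_{P,\bm w}$ has period $q$ for every $\bm w$, and by Ehrhart's theorem $\ehr_{P+\bm v}$ has a period dividing the denominator of $P+\bm v$; since $q\bm v$ need not be integral, the cleanest move is to take $Q$ to be a common multiple of $q$ and the denominator of $\bm v$ (i.e.\ the least $Q$ with $Q\bm v\in\mathbb Z^d$ and $Q P$ integral), so that both quasi-polynomials in the statement have period $Q$. Then ``the $k$th constituent'' on each side is a genuine polynomial in $t$, and to prove two polynomials agree it suffices to check equality at infinitely many integer values of $t$.

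Next I would pick a representative $t \equiv k \pmod{Q}$ with $t \geq 0$ large (running over all sufficiently large such $t$), and evaluate. On the left, the $k$th constituent of $\ehr_{P+\bm v}$ at $t$ is $\ehr_{P+\bm v}(t) = \#\big(t(P+\bm v)\cap\mathbb Z^d\big) = \#\big((tP + t\bm v)\cap \mathbb Z^d\big)$. On the right, the $k$th constituent of $\TL_{P,k\bm v}$ at $t$ is $\TL_{P,k\bm v}(t) = \#\big((tP + k\bm v)\cap \mathbb Z^d\big)$. So the whole lemma reduces to the claim
\[
\#\big((tP + t\bm v)\cap \mathbb Z^d\big) = \#\big((tP + k\bm v)\cap \mathbb Z^d\big)
\quad\text{whenever } t \equiv k \pmod{Q}.
\]
But $t\bm v - k\bm v = (t-k)\bm v$, and $t - k$ is a multiple of $Q$, hence $(t-k)\bm v \in \mathbb Z^d$; translation by an integer vector is a bijection of $\mathbb Z^d$ onto itself, so it carries $(tP + k\bm v)\cap\mathbb Z^d$ bijectively onto $(tP + t\bm v)\cap\mathbb Z^d$. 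This proves the displayed equality for all $t$ in the congruence class, hence the two constituents (polynomials) coincide.

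The only subtlety — and the one place to be careful rather than a genuine obstacle — is bookkeeping about periods and the convention extending $\ehr_{P+\bm v}$ to negative arguments: one must make sure ``$k$th constituent'' is computed with respect to a period that works simultaneously for $\ehr_{P+\bm v}$, for $\TL_{P,k\bm v}$, and for the integrality of $(t-k)\bm v$; choosing $Q = \operatorname{lcm}\big(q,\ \text{denominator of }\bm v\big)$ does all three at once, and the paper's remark that the constituent notion is independent of the chosen period means no generality is lost. Everything else is the one-line translation-bijection argument above; there is no hard analytic or combinatorial step.
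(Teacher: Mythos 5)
Your proposal is correct and is essentially the paper's own argument: choose a common period $Q$ with $QP$ integral and $Q\bm v\in\mathbb Z^d$, evaluate both sides at $t\equiv k\pmod Q$, and use that $(t-k)\bm v\in\mathbb Z^d$ so the two lattice-point counts agree, whence the two constituent polynomials coincide. The period bookkeeping you flag is exactly what the paper handles by taking $q=\operatorname{lcm}(\rho,\rho')$, so there is nothing to add.
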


\begin{proof}
We may assume $k\geq 0$.
Let $\rho$ and $\rho'$ be positive integers such that $\rho P$ is integral and $\rho' \bm v \in \mathbb Z^d$.
Let $q$ be a common multiple of $\rho$ and $\rho'$.
Then $q$ is a common period of quasi-polynomials $\ehr_{P+ \bm v}$ and $\TL_{P,\bm v}$.
For every integer $t \geq 0$ with $t \equiv k$ (mod $q$) we have\begin{align*}
    \ehr_{P+\bm v}(t)&=\#\big((t P+t \bm v) \cap \mathbb Z^d\big)
    =\# \big( (t P + k \bm v) \cap \mathbb Z^d \big)
    =\TL_{P,k\bm v}(t),
\end{align*}
where the second equality follows from $(t-k) \bm v \in \mathbb Z^d$.
Since both $\ehr_{P+\bm v}$ and $\TL_{P,\bm v}$ are quasi-polynomials with a period $q$, the above equation proves the desired property.
\end{proof}

\begin{remark}
Let $P \subset \mathbb R^d$ be a rational $d$-polytope and $\bm v \in \mathbb R^d$.
Like usual Ehrhart quasi-polynomials,
each constituent of $\TL_{P,\bm v}$ is a polynomial of degree $d$ whose leading coefficient equals the volume of $P$.
Indeed, if $f_k$ is the $k$th constituent of $\TL_{P,\bm v}$ and $q$ is a period of $\TL_{P,\bm v}$, then $\lim_{t \to \infty} \frac {f_k(qt+k)} {(qt+k)^d}=
\lim_{t \to \infty} \frac {\#((qt+k)P \cap \mathbb Z^d)} {(qt+k)^d}$ is the volume of $P$.
Since $f_k$ is a polynomial,
this means that $f_k$ has degree $d$ and the coefficient of $t^d$ in $f_k$ equals the volume of $P$.
\end{remark}

\section{Translated lattice points enumerators and toric arrangements}

In this section,
we study when $\TL_{P,\bm v}$ equals $\TL_{P,\bm u}$ for different $\bm v, \bm u \in \mathbb R^d$ using toric arrangements.
Recall that the translated lattice points enumerator $\TL_{P,\bm v}$ can be identified with a generating function of a translated cone $\mathcal C_P+(\bm v,0)$
because of the equality
\begin{align}
\label{useTHM3.8}
\sum_{(a_1,\dots,a_{d+1}) \in (\mathcal C_P+(\bm v,0))\cap \mathbb Z^{d+1}} z^{a_{d+1}} = \sum_{t=0}^\infty \big(\TL_{P,\bm v}(t) \big) z^t.
\end{align}
This in particular says that if $\mathcal C_P+(\bm u,0)$ and $\mathcal C_P+(\bm v,0)$ have the same lattice points, then we have $\TL_{P,\bm u}=\TL_{P,\bm v}$.
To prove Theorem \ref{thm:1-2},
we mainly study when $\mathcal C_P+(\bm u,0)$ and $\mathcal C_P+(\bm v,0)$ have the same lattice points.

We note that such a study is not very new.
Indeed, lattice points in the translated cone $\mathcal C_P+\bm v$ is  closely related to conic divisorial ideals of Ehrhart rings studied in \cite{Bruns,BG},
and Bruns \cite{Bruns} explains for which $\bm u,\bm v \in \mathbb R^{d+1}$ the lattice points in $\mathcal C_P+ \bm u$ equal those in $\mathcal C_P+ \bm v$.
We will explain this connection to commutative algebra later in Section 8.

\subsection{Regions associated with hyperplane arrangements}
We first introduce some notation on arrangements of hyperplanes.
For $\bm x,\bm y \in \mathbb R^d$, we write $(\bm x,\bm y)$ for the standard inner product.
Also, for $\bm a \in \mathbb R^d \setminus \{\bm 0\}$ and $b \in \mathbb R$,
we write
$$H^{\geq }_{\bm a,b}=\{ \bm x \in \mathbb R^d \mid (\bm a, \bm x)\geq b\}\ \
\mbox{ and } \ \ H^{>}_{\bm a,b}=\{ \bm x \in \mathbb R^d \mid (\bm a, \bm x)> b\}$$
for closed and open half space defined by the linear inequalities $(\bm a,\bm x)\geq b$ and $(\bm a,\bm x)>b$, respectively,
and write
$$H_{\bm a,b}=\{ \bm x \in \mathbb R^d \mid (\bm a, \bm x)=b\}$$
for the hyperplane defined by the linear equation $(\bm a,\bm x)=b$.
In the case where $\bm a$ can be chosen from $\mathbb Z^d$ and $b$ is from $\mathbb Z$, we call the hyperplane $H_{\bm a,b}$ \textit{rational}. 
Let $N=\{\bm a_1,\dots,\bm a_m \}$ be a set of elements in $\mathbb Z^{d} \setminus \{\bm 0\}$.
Define the arrangement of hyperplanes
$$\A_N=\{H_{\bm a,k} \mid \bm a \in N,\  k \in \mathbb Z\}.$$
See Figure \ref{fig1}.
From now on, we fix an order $\bm a_1,\dots,\bm a_m$ of elements of $N$.
We define the map $\varphi_{(\bm a_1,\dots,\bm a_m)}: \mathbb R^d \to \mathbb R^m$ by
$$\varphi_{(\bm a_1,\dots,\bm a_m)}(\bm x)=\big((\bm a_1,\bm x),(\bm a_2,\bm x),\dots,(\bm a_m,\bm x)\big).$$
For $x \in \mathbb R$,
we write $\lfloor x \rfloor =\max\{ \ell \in \mathbb Z \mid \ell \leq x\}$
and
$\lceil x \rceil =\min\{ \ell \in \mathbb Z \mid \ell \geq x\}$.
Also, given an integer sequence $\bm c=(c_1,\dots,c_m) \in \mathbb Z^m$,
we define 
$$U_{\bm c}^N
=\{ \bm x \in \mathbb R^d \mid \lceil \varphi_{(\bm a_1,\dots,\bm a_m)}(\bm x) \rceil =\bm c\}
=\{ \bm x \in \mathbb R^d \mid c_{i}-1 < (\bm a_i,\bm x) \leq c_i \mbox{ for }i=1,2,\dots,m\}$$
where $\lceil (x_1,\dots,x_m)\rceil =(\lceil x_1 \rceil, \dots,\lceil x_m \rceil)$.
We call $U_{\bm c}^N$ an {\bf upper region} of $N$.
Note that $U_{\bm c}^N$ could be empty.
Also we have the partition
\[
\textstyle
\mathbb R^d = \bigsqcup_{\bm c \in \mathbb Z^d} U_{\bm c}^N\]
where $\bigsqcup$ denotes a disjoint union.
We write $\Lambda_N$ for the set of all upper regions of $N$.
The set $\Lambda_N$ is stable by translations by integer vectors,
so $\mathbb Z^d$ acts on these sets.
Indeed, since $\bm a_1,\dots,\bm a_m$ are integer vectors,
for any $\intvec \in \mathbb Z^d$, we have
$$U_{\bm c}^N+ \intvec= U_{\bm c+\varphi_{(\bm a_1,\dots,\bm a_m)}(\intvec)}^N.
$$
We write $\Lambda_N/\mathbb Z^d$
for the quotient of these sets by this $\mathbb Z^d$-action defined by translations by integer vectors.
This set can be considered as a partition of the $d$-torus $\mathbb R^d /\mathbb Z^d$.

\begin{example}
\label{ex:4.1}
Let $N=\{(1,0),(-1,2)\}$.
Then the set $\Lambda_N/\mathbb Z^2$ consists of two elements
with the following representatives:
\begin{align*}
    R_1 & = U_{(1,1)}^N=\{(x,y) \in \mathbb R^2 \mid 0<x \leq 1, 0 < -x+2y \leq 1\},\\
    R_2 & = U_{(1,0)}^N=\{(x,y) \in \mathbb R^2 \mid 0<x \leq 1, -1 < -x+2y \leq 0\}.
\end{align*}
See Figure \ref{fig1} for the visualization of $\A_N$ and $\Lambda_N /\mathbb Z^2$.
\end{example}

\begin{figure}
\includegraphics[width=8cm,pagebox=cropbox]{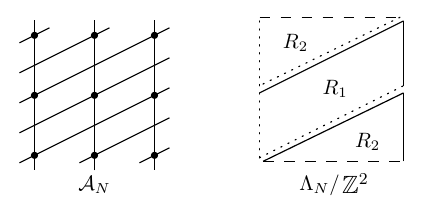}
\caption{$\A_N$ and $\Lambda_N/\mathbb Z^2$ when $N=\{(1,-2),(0,1)\}$.
Doted and solid lines are open and closed boundaries respectively. Dashed lines indicate the occurrence of identification inside a region.
}
\label{fig1}
\end{figure}

\subsection{Upper regions and lattice points in translated cones.}
We now explain how upper regions relate to lattice points in translated cones.
We first recall two basic facts on lattice points.
The following lemma is an easy consequence of Euclidian algorithm.

\begin{lemma}
\label{lem:gcd}
Let $\bm a \in \mathbb Z ^d\setminus \{\bm 0\}$, $b \in \mathbb R$ and $g=\mathrm{gcd}(\bm a)$.
A linear equation $(\bm a,\bm x)=b$ has an integral solution if and only if $b \in g\mathbb Z$.
\end{lemma}

We also need the following statement which easily follows from Lemma \ref{lem:gcd}.
\begin{lemma}
    \label{gcdcor}
    Let $H \subset \mathbb R^d$ be a rational hyperplane and let $\bm v \in \mathbb R^d$ be a point such that $H + \bm v \ne H$.
    There is an $\varepsilon>0$ such that $H+s\bm v$ contains no lattice points for any $0<s\leq \varepsilon$.
\end{lemma}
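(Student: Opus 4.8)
The plan is to reduce the statement to the one-dimensional picture transverse to the hyperplane. Write $H = H_{\bm a, b}$ for some $\bm a \in \mathbb R^d \setminus \{\bm 0\}$ and $b \in \mathbb R$. The condition $H + \bm v \neq H$ means precisely that $(\bm a, \bm v) \neq 0$; indeed, $H + \bm v = H_{\bm a, b + (\bm a, \bm v)}$, so the translated hyperplane coincides with $H$ exactly when $(\bm a, \bm v) = 0$. For $s \in \mathbb R$ the hyperplane $H + s\bm v$ is $H_{\bm a, b + s(\bm a, \bm v)}$, and as $s$ ranges over a small interval $(0, \varepsilon]$ the right-hand constant $b + s(\bm a, \bm v)$ sweeps out a nondegenerate interval (of length $\varepsilon |(\bm a, \bm v)| > 0$) not containing $b$. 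So the task becomes: find $\varepsilon > 0$ such that none of the values $b + s(\bm a, \bm v)$, $0 < s \leq \varepsilon$, is attained by $(\bm a, \bm n)$ for any $\bm n \in \mathbb Z^d$.

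The first step is to reduce to the case $\bm a \in \mathbb Z^d$. If $\bm a$ is not a scalar multiple of an integer vector, then $H_{\bm a, c}$ contains no lattice point at all for \emph{any} $c$ for which $H$ does not already — more carefully, the set of $c$ for which $H_{\bm a, c} \cap \mathbb Z^d \neq \emptyset$ is either empty or, after rescaling $\bm a$ to a primitive integer vector, a coset of a rank-one subgroup of $\mathbb R$. Since the lemma is only claimed for hyperplanes (not assumed rational), the cleanest route is: if $\bm a \notin \mathbb R \cdot \mathbb Z^d$, then $\{(\bm a, \bm n) : \bm n \in \mathbb Z^d\}$ is a dense subgroup of $\mathbb R$ whose intersection with any single value is a coset of $\{(\bm a,\bm n): \bm n\in\mathbb Z^d, (\bm a,\bm n)=0\}$; but density is not enough by itself, so I would instead simply observe that $H \cap \mathbb Z^d$ may be empty and handle the two cases separately. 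When $\bm a \in \mathbb R \cdot \mathbb Z^d$, rescale so that $\bm a$ is a primitive integer vector with $g = \gcd(\bm a) = 1$; by Lemma \ref{lem:gcd}, $H_{\bm a, c} \cap \mathbb Z^d \neq \emptyset$ if and only if $c \in \mathbb Z$. Thus $H + s\bm v$ contains a lattice point only if $b + s(\bm a, \bm v) \in \mathbb Z$, i.e. $s \in \big(\mathbb Z - b\big)/(\bm a, \bm v)$, which is a discrete (in fact uniformly spaced) subset of $\mathbb R$. Since $s = 0$ is either in this set or not, in either case there is a gap $(0, \varepsilon]$ below the next element, and this $\varepsilon$ works.

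The irrational case is the only genuine subtlety, and my plan is to dispose of it as follows. Suppose $\bm a$ is not a scalar multiple of any integer vector. Pick a basis of $\mathbb R^d$ and write the lattice $\mathbb Z^d$; the image $\{(\bm a, \bm n) : \bm n \in \mathbb Z^d\} \subseteq \mathbb R$ is a subgroup, and since $\bm a \notin \mathbb R \cdot \mathbb Z^d$ one checks it is \emph{not} cyclic, hence dense. Density would seem to obstruct the argument — but the rescue is that we do not need $H + s\bm v$ to avoid a whole interval of lattice-point-carrying $c$'s; rather, for each fixed $\bm n \in \mathbb Z^d$ the equation $(\bm a, \bm n) = b + s(\bm a, \bm v)$ has at most one solution $s = s_{\bm n}$, and $s = 0$ occurs only for those $\bm n$ with $(\bm a, \bm n) = b$, i.e. $\bm n \in H \cap \mathbb Z^d$. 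The real obstacle is that infinitely many distinct $\bm n$ could give solutions $s_{\bm n}$ accumulating at $0$ from above. To rule this out, note $s_{\bm n} = \big((\bm a,\bm n) - b\big)/(\bm a, \bm v)$, and $s_{\bm n} \to 0$ forces $(\bm a, \bm n) \to b$ with $\bm n$ ranging over an infinite set; restricting to $\bm n$ in a fixed ball is impossible (finitely many), so $\|\bm n\| \to \infty$. This is where I would invoke a compactness/geometry-of-numbers estimate: the lattice points with $(\bm a, \bm n)$ close to $b$ and $\|\bm n\|$ large lie in a thin slab, and one shows the values $(\bm a, \bm n)$ they realize in any bounded window of $s$ form a set bounded away from $b$ unless $b$ itself is a limit of such values — and if $b$ \emph{is} such a limit, a short argument shows $b \in \{(\bm a,\bm n)\}$ after all, reducing to the previous paragraph. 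I expect the bookkeeping in this last reduction, rather than any deep idea, to be the main chore; it is entirely elementary once one commits to the "at most one $s$ per lattice point, and the bad ones cannot accumulate at $0$" framing.
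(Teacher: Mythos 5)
Your treatment of the case where $\bm a$ is proportional to an integer vector is correct and is exactly the argument the paper intends (the paper offers no proof beyond the remark that the lemma easily follows from Lemma \ref{lem:gcd}): after normalizing $\bm a$ to a primitive integer vector, the set of constants $c$ for which $H_{\bm a,c}$ meets $\mathbb Z^d$ is $\mathbb Z$, so the bad parameters $s$ form the discrete set $(\mathbb Z-b)/(\bm a,\bm v)$ and one can fit an interval $(0,\varepsilon]$ below its smallest positive element.

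The genuine gap is the irrational case, where your proposed rescue cannot succeed because the statement is simply false there. If $\bm a$ is not a real multiple of an integer vector, the subgroup $G=\{(\bm a,\bm n)\mid \bm n\in\mathbb Z^d\}$ of $\mathbb R$ is finitely generated of rank at least $2$, hence dense; consequently, for every $\varepsilon>0$, the interval between $b$ and $b+\varepsilon(\bm a,\bm v)$ meets $G$, so some $s\in(0,\varepsilon]$ gives a hyperplane $H+s\bm v$ containing a lattice point. Concretely, $H=\{x_1+\sqrt{2}\,x_2=0\}\subset\mathbb R^2$ with $\bm v=(1,0)$ is a counterexample, since $\mathbb Z+\sqrt{2}\,\mathbb Z$ is dense in $\mathbb R$. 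In particular, the two claims closing your third paragraph --- that the values $(\bm a,\bm n)$ in a bounded window are bounded away from $b$ unless $b$ is a limit of them, and that in the latter case $b\in G$ so one reduces to the discrete case --- both fail: when $G$ is dense, $b$ is always a limit of $G$ from above, and even when $b\in G$ the set of bad $s$ is dense at $0$, so no $\varepsilon$ exists. The correct resolution is not a cleverer argument but a restriction of the hypothesis: the lemma should be read for hyperplanes with rational normal direction (i.e.\ $\bm a$ may be taken in $\mathbb Z^d$, with $b\in\mathbb R$ arbitrary), which covers every application in the paper (Lemmas \ref{lem:codim1} and \ref{enumProjection} only invoke it for translates of facet hyperplanes of rational cones), and for which your first paragraph already gives a complete proof.
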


\begin{lemma}
\label{lem:cone}
Let $H \subset \mathbb R^d$ be a rational hyperplane.
Any $(d-1)$-dimensional convex cone in $H$ contains a lattice point. 
\end{lemma}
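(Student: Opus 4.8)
The plan is to exploit the fact that the lattice points of a rational hyperplane $H$ form a full‑rank coset of a lattice inside the $(d-1)$‑dimensional affine space $H$, so that they are ``dense enough'' to meet any full‑dimensional subcone of $H$.

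First I would fix a lattice point $\bm p\in H\cap\mathbb Z^d$ (such a point exists because $H$ is rational; see Lemma~\ref{lem:gcd}), denote by $H_0$ the linear hyperplane parallel to $H$, and set $L=H_0\cap\mathbb Z^d$. Then $H\cap\mathbb Z^d=\bm p+L$ and $L$ is a lattice of rank $d-1$ spanning $H_0$. Next, given a $(d-1)$‑dimensional convex cone $C\subseteq H$, I would write $C=\bm q+K$ with $\bm q\in C$ and $K\subseteq H_0$ a convex cone with apex at the origin; since $\dim C=d-1=\dim H$, the cone $K$ is full‑dimensional in $H_0$. Consequently the interior $\inte(K)$ of $K$ within $H_0$ is a nonempty open subset of the rational subspace $H_0$, so I can select linearly independent rational vectors $\bm w_1,\dots,\bm w_{d-1}\in\inte(K)$ and, after clearing denominators, assume $\bm w_1,\dots,\bm w_{d-1}\in K\cap\mathbb Z^d=K\cap L$; these still span $H_0$. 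Since $K$ is a convex cone, $\cone(\bm w_1,\dots,\bm w_{d-1})\subseteq K$, and hence $\bm q+\cone(\bm w_1,\dots,\bm w_{d-1})\subseteq C$.

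It then remains to place a point of $\bm p+L$ inside $\bm q+\cone(\bm w_1,\dots,\bm w_{d-1})$. Because $\bm p,\bm q\in H$, we have $\bm p-\bm q\in H_0$, so I can write $\bm p-\bm q=\sum_{i=1}^{d-1}c_i\bm w_i$ with $c_i\in\mathbb R$, and set $n_i=\lceil -c_i\rceil\in\mathbb Z$, so that $c_i+n_i\ge 0$ for every $i$. Then $\sum_i n_i\bm w_i\in L$, so $\bm p+\sum_i n_i\bm w_i\in\bm p+L\subseteq\mathbb Z^d$, while
\[
\bm p+\sum_{i=1}^{d-1}n_i\bm w_i=\bm q+\sum_{i=1}^{d-1}(c_i+n_i)\bm w_i\in\bm q+\cone(\bm w_1,\dots,\bm w_{d-1})\subseteq C;
\]
this is the desired lattice point in $C$.

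The one step needing a little care is the extraction of the integral generators $\bm w_1,\dots,\bm w_{d-1}$: this uses only density of the rationals in $H_0$ together with the fact that the open set $\inte(K)$ cannot be contained in a proper subspace of $H_0$, so one may choose the $\bm w_i$ one at a time, at each stage taking a rational point of $\inte(K)$ outside the span of those already chosen. Everything else is a direct computation, so I do not expect any genuine obstacle.
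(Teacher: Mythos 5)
Your proof is correct and follows essentially the same route as the paper's: both reduce the problem to the rank-$(d-1)$ lattice $H\cap\mathbb Z^d$ spanning $H$, after which one only needs that a full-dimensional convex cone over a full-rank lattice contains a lattice point. The paper simply cites this last fact (as the known statement that any $d$-dimensional cone in $\mathbb R^d$ contains a lattice point, transported via $H\cap\mathbb Z^d\cong\mathbb Z^{d-1}$), whereas you prove it directly by picking integral generators $\bm w_1,\dots,\bm w_{d-1}$ in the interior of the cone and rounding up the coordinates of $\bm p-\bm q$; that is added detail, not a different method.
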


\begin{proof}
Let $H=H_{\bm a,b}$ for some $\bm a \in \mathbb Z^d$ and $b \in \mathbb Z$. Without loss of generality, we may assume $b=0$. 
Since any $d$-dimensional convex cone in $\mathbb R^d$ contains a lattice point,
the lemma follows from the fact that $H \cap \mathbb Z^d \cong \mathbb Z^{d-1}$ as $\mathbb Z$-modules.
\end{proof}

Let $P \subset \mathbb R^d$ be a rational $d$-polytope.
By the Weyl-Minkowski theorem for convex cones,
the cone $\mathcal C_P$ has the unique presentation
\begin{align}
    \label{4-1}
    \mathcal C_P=H_{\bm a_1,0}^\geq \cap \cdots \cap H_{\bm a_m,0}^\geq
\end{align}
such that
\begin{itemize}
    \item[(1)] each $\bm a_i$ is primitive (that is, $\mathrm{gcd}(\bm a_i)=1$), and
    \item[(2)] the presentation is irredundant, that is, each $H_{\bm a_i,0}^\geq$ cannot be omitted from the presentation.
\end{itemize}
Note that the second condition says that $\mathcal C_P \cap H_{\bm a_i,0}$ is a facet of $\mathcal C_P$.
The vectors $\bm a_1,\dots,\bm a_m$ in \eqref{4-1} are called (inner) {\bf normal vectors} of $\mathcal C_P$
and we write $\widetilde N(P)=\{\bm a_1,\dots,\bm a_m\}$ for the set of all normal vectors of $\mathcal C_P$.
The next statement was given in \cite{Bruns}

\begin{proposition}[Bruns]
\label{idealregion}
Let $P \subset \mathbb R^d$ a convex polytope and
$\widetilde N(P)=\{\bm a_1,\dots,\bm a_m\}$.
Let $\bm u,\bm v \in \mathbb R^{d+1}$.
The following conditions are equivalent.
\begin{itemize}
    \item[(1)] $(\mathcal C_P+ \bm u)\cap \mathbb Z^{d+1}=(\mathcal C_P +\bm v) \cap \mathbb Z^{d+1}$.
    \item[(2)] $\lceil \varphi_{(\bm a_1,\dots,\bm a_m)}(\bm u)\rceil=\lceil \varphi_{(\bm a_1,\dots,\bm a_m)}(\bm v)\rceil$, that is, $\bm u$ and $\bm v$ belong to the same upper region of $\Lambda_{\widetilde N(P)}$.
\end{itemize}
\end{proposition}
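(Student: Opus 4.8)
\emph{Proof proposal.}
The plan is to base both implications on one explicit description of the lattice points in the translated cone. First I would note that, since $\mathcal C_P=\bigcap_{i=1}^m H^{\geq}_{\bm a_i,0}$ by \eqref{4-1} and each $\bm a_i\in\mathbb Z^{d+1}$, for every $\bm z\in\mathbb Z^{d+1}$ the pairing $(\bm a_i,\bm z)$ is an integer; hence the condition $\bm z\in\mathcal C_P+\bm u$, i.e.\ $(\bm a_i,\bm z)\geq(\bm a_i,\bm u)$ for all $i$, is equivalent to $(\bm a_i,\bm z)\geq\lceil(\bm a_i,\bm u)\rceil$ for all $i$. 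That is,
\[
(\mathcal C_P+\bm u)\cap\mathbb Z^{d+1}=\bigl\{\bm z\in\mathbb Z^{d+1}:(\bm a_i,\bm z)\geq\lceil(\bm a_i,\bm u)\rceil\text{ for }i=1,\dots,m\bigr\}.
\]
The right-hand side depends only on $\lceil\varphi_{\bm n}(\bm u)\rceil=(\lceil(\bm a_1,\bm u)\rceil,\dots,\lceil(\bm a_m,\bm u)\rceil)$, so $(2)\Rightarrow(1)$ is immediate.

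For $(1)\Rightarrow(2)$ I would prove that, for each $i$,
\[
\lceil(\bm a_i,\bm u)\rceil=\min\bigl\{(\bm a_i,\bm z):\bm z\in(\mathcal C_P+\bm u)\cap\mathbb Z^{d+1}\bigr\},
\]
so that every coordinate of $\lceil\varphi_{\bm n}(\bm u)\rceil$ is recovered from the set $(\mathcal C_P+\bm u)\cap\mathbb Z^{d+1}$ alone; granting this, $(1)$ forces $\lceil\varphi_{\bm n}(\bm u)\rceil=\lceil\varphi_{\bm n}(\bm v)\rceil$. In this formula the inequality ``$\geq$'' is exactly the displayed description, so the task reduces to constructing a lattice point $\bm z\in\mathcal C_P+\bm u$ with $(\bm a_i,\bm z)=c_i$, where $c_i:=\lceil(\bm a_i,\bm u)\rceil$.

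To build such a $\bm z$ I would combine two lattice points. Since $\bm a_i$ is primitive, Lemma \ref{lem:gcd} supplies $\bm r\in\mathbb Z^{d+1}$ with $(\bm a_i,\bm r)=1$. Since \eqref{4-1} is irredundant, $F_i:=\mathcal C_P\cap H_{\bm a_i,0}$ is a facet of $\mathcal C_P$, hence $d$-dimensional; using that $\bm a_i$ is primitive and $\mathcal C_P$ is full-dimensional (so no $\bm a_j$ with $j\neq i$ is a scalar multiple of $\bm a_i$), its relative interior equals $\{\bm x\in H_{\bm a_i,0}:(\bm a_j,\bm x)>0\text{ for all }j\neq i\}$ and is a $d$-dimensional convex cone inside the rational hyperplane $H_{\bm a_i,0}\subset\mathbb R^{d+1}$. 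By Lemma \ref{lem:cone} (applied in $H_{\bm a_i,0}$) it contains a lattice point $\bm w$; thus $\bm w\in\mathbb Z^{d+1}$, $(\bm a_i,\bm w)=0$, and $(\bm a_j,\bm w)\geq1$ for all $j\neq i$. Now put $\bm z:=c_i\bm r+\lambda\bm w$ for a positive integer $\lambda$. Then $(\bm a_i,\bm z)=c_i$ for every $\lambda$, while for each $j\neq i$ we have $(\bm a_j,\bm z)=c_i(\bm a_j,\bm r)+\lambda(\bm a_j,\bm w)\to+\infty$ as $\lambda\to\infty$; choosing $\lambda$ large enough (finitely many linear conditions) makes $(\bm a_j,\bm z)\geq(\bm a_j,\bm u)$ for all $j$, so $\bm z\in\mathcal C_P+\bm u$ by \eqref{4-1}, and $(\bm a_i,\bm z)=c_i$, as required.

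I expect the main obstacle to be this last geometric step: producing a lattice point $\bm w$ in the relative interior of the facet $F_i$ on which all the other normals $\bm a_j$ are strictly positive. This is where irredundancy of \eqref{4-1} (so that $F_i$ is genuinely $d$-dimensional) and primitivity of the $\bm a_i$ (so that Lemmas \ref{lem:gcd} and \ref{lem:cone} apply and no $\bm a_j$ is parallel to $\bm a_i$) are essential; dropping either hypothesis makes the equivalence fail, as small examples with redundant or non-primitive inequalities show. The remaining ingredients---the displayed description of $(\mathcal C_P+\bm u)\cap\mathbb Z^{d+1}$ and the choice of $\lambda$---are routine.
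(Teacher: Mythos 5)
Your proposal is correct and follows essentially the same route as the paper: the forward implication is the same rounding-up observation (the paper packages it as Lemma \ref{lem:gcd}), and the reverse implication in both arguments comes down to exhibiting, via Lemma \ref{lem:cone}, a lattice point of $\mathcal C_P+\bm u$ on the hyperplane $H_{\bm a_i,c_i}$, which then cannot lie in $\mathcal C_P+\bm v$ if the ceilings differ. The only cosmetic difference is how that witness is produced — you build it as $c_i\bm r+\lambda\bm w$ from a lattice point $\bm w$ in the relative interior of the facet $\mathcal C_P\cap H_{\bm a_i,0}$, whereas the paper applies Lemma \ref{lem:cone} directly to the slice $(\mathcal C_P+\bm u)\cap H_{\bm a_i,c_i}$, which contains a translate of that facet cone.
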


\begin{proof}
Let $\lceil \varphi_{(\bm a_1,\dots,\bm a_m)}(\bm u) \rceil=(c_1,\dots,c_m)$ and let $\lceil \varphi_{(\bm a_1,\dots,\bm a_m)}(\bm v) \rceil =(d_1,\dots,d_m)$.
Since 
$$\mathcal C_P+\bm u=H_{\bm a_1,(\bm a_1,\bm u)}^\geq \cap \cdots \cap H_{\bm a_m,(\bm a_m,\bm u)}^{\geq} $$
and since Lemma \ref{lem:gcd} says
$$H_{\bm a,b} ^\geq\cap \mathbb Z^{d+1}=H_{\bm a,\lceil b \rceil}^{\geq} \cap \mathbb Z^{d+1} \ \ \mbox{ for any } \bm a \in \mathbb Z^d,\ b \in \mathbb R, $$
we have
\begin{align}
    \label{eqcone}
(\mathcal C_P+\bm u) \cap \mathbb Z^{d+1}= \big(H_{\bm a_1,c_1}^\geq \cap \cdots \cap H_{\bm a_m,c_m}^{\geq}\big)\cap \mathbb Z^{d+1}
\end{align}
and
$$(\mathcal C_P+\bm v) \cap \mathbb Z^{d+1}= \big(H_{\bm a_1,d_1}^\geq \cap \cdots \cap H_{\bm a_m,d_m}^{\geq}\big)\cap \mathbb Z^{d+1}.$$
These prove (2) $\Rightarrow$ (1).

We prove (1) $\Rightarrow$ (2).
We assume $c_1 <d_1$ and prove $(\mathcal C_P+ \bm u) \cap \mathbb Z^{d+1} \ne (\mathcal C_P + \bm v)\cap \mathbb Z^{d+1}$.
In this case $F=(\mathcal C_P + \bm u)\cap H_{\bm a_1,c_1}$ contains a $d$-dimensional cone in $H_{\bm a_1,c_1}$, so it contains a lattice point by Lemma \ref{lem:cone}.
On the other hand,
since $c_1<d_1$ we have
$\mathbb Z^{d+1} \cap (\mathcal C_P+\bm v) \cap H_{\bm a_1,c_1}=\varnothing$. These prove $(\mathcal C_P+\bm u) \cap \mathbb Z^{d+1} \ne (\mathcal C_P + \bm v)\cap \mathbb Z^{d+1}$.
\end{proof}


\begin{remark}
If $N=\widetilde N(P)$ for some rational $d$-polytope $P \subset \mathbb R^d$
(that is, $N$ is the set of normal vectors of the cone $\mathcal C_P \subset \mathbb R^{d+1}$),
then the set $\Lambda_N$ has a special property that every element of $\Lambda_N$ has dimension $d+1$.
Indeed, if $R \in \Lambda_N$ and $\bm x \in R$
then we have $\bm x - \bm y \in R$ for all $\bm y \in \mathrm{int}(\mathcal C_P)$ that is sufficiently close to the origin,
which implies that $R$ has dimension $d+1$.
As we see in Example \ref{ex4.2},
this property does not hold when $N$ is the set of normal vectors of a polytope (not the cone over a polytope).
\end{remark}




We have studied lattice points in translated cones $\mathcal C_P + \bm v$,
but we are actually interested in a special case when $\bm v=(\bm v',0)$ since this is the case which is related to translated lattice points enumerators.
Below we describe when $\mathcal C_P+(\bm u,0)$ and $\mathcal C_P+(\bm v,0)$ have the same lattice points.
Let $P \subset \mathbb R^d$ be a rational $d$-polytope.
By the fundamental theorem on convex polytopes,
there is the unique presentation
$$P=H_{\bm a_1,b_1} ^\geq \cap \cdots \cap H_{\bm a_m,b_m}^\geq$$
such that
\begin{itemize}
    \item[(1)] each $(\bm a_i,b_i) \in \mathbb Z^{d+1}$ is primitive, and
    \item[(2)] the presentation is irredundant.
\end{itemize}
The vectors $\bm a_1,\dots,\bm a_m$ are called \textbf{normal vectors} of $P$.
We define 
$$N(P)=\{\bm a_1,\dots,\bm a_m\}.$$
We note that 
$$\widetilde N(P)=\{(\bm
 a_1,b_1),\dots,(\bm a_m,b_m)\}$$
since if $H \subset \mathbb R^d$ is a hyperplane defined by $a_1x_1+ \cdots + a_d x_d=b$ then the cone $\mathcal C_H$ is the hyperplane defined by $a_1x_1+ \cdots + a_d x_d=bx_{d+1}$.
We write
\[ \A_P=\A_{N(P)}\ \  \mbox{ and } \ \ \Lambda_P=\Lambda_{N(P)}.\]
The following statement is essentially a consequence of Proposition \ref{idealregion}.

\begin{corollary}
\label{stdidealregion}
Let $P \subset \mathbb R^d$ be a rational $d$-polytope and let
$\bm u,\bm v \in \mathbb R^d$. The following conditions are equivalent.
\begin{itemize}
    \item[(1)] $(\mathcal C_P +(\bm u,0)) \cap \mathbb Z^{d+1}=(\mathcal C_P +(\bm v,0)) \cap \mathbb Z^{d+1}$.
    \item[(2)] $(\bm u,0)$ and $(\bm v,0)$ belong to the same upper region in $\Lambda_{\widetilde N(P)}$.
    \item[(3)] $\bm u$ and $\bm v$ belong to the same upper region in $\Lambda_P$
\end{itemize}
\end{corollary}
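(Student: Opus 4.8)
The plan is to deduce all three equivalences from Proposition \ref{idealregion}, applied with the particular point $\bm u$ replaced by $(\bm u, 0) \in \mathbb R^{d+1}$. The first observation to nail down is the identity $\widetilde N(P) = \{(\bm a_1, b_1), \dots, (\bm a_m, b_m)\}$ recorded just above the statement: the irredundant presentation $P = \bigcap_i H_{\bm a_i, b_i}^{\geq}$ in $\mathbb R^d$ lifts to the irredundant presentation $\mathcal C_P = \bigcap_i H_{(\bm a_i, -b_i), 0}^{\geq}$ in $\mathbb R^{d+1}$ (the sign on $b_i$ depending on the convention for $\mathcal C_P$; I would pick the convention consistent with the displayed equation $\mathcal C_P = \bigcap H_{\bm a_i, 0}^{\geq}$ in \eqref{4-1} and be careful that primitivity of $(\bm a_i, b_i)$ is exactly primitivity of the lifted normal vector). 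Once this is in hand, the equivalence (1) $\Leftrightarrow$ (2) is literally Proposition \ref{idealregion} with $\bm u, \bm v$ there taken to be $(\bm u, 0), (\bm v, 0)$ here, and with $\widetilde N(P)$ as the set $N$; nothing new is required.

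The remaining work is the equivalence (2) $\Leftrightarrow$ (3), which says that membership of $(\bm u, 0)$ and $(\bm v, 0)$ in a common upper region of $\Lambda_{\widetilde N(P)} \subset \mathbb R^{d+1}$ is the same as membership of $\bm u$ and $\bm v$ in a common upper region of $\Lambda_P = \Lambda_{N(P)} \subset \mathbb R^d$. Here I would simply unwind the definitions: for a lifted normal vector $(\bm a_i, b_i)$, the pairing with $(\bm u, 0)$ is $\big((\bm a_i, b_i), (\bm u, 0)\big) = (\bm a_i, \bm u)$, so $\varphi_{\bm n}(\bm u, 0) = \varphi_{(\bm a_1, \dots, \bm a_m)}(\bm u)$ componentwise. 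Consequently $\lceil \varphi_{\bm n}(\bm u, 0) \rceil = \lceil \varphi_{(\bm a_1, \dots, \bm a_m)}(\bm u) \rceil$, and the condition defining the upper region containing $(\bm u, 0)$ in $\mathbb R^{d+1}$ coincides with the condition defining the upper region containing $\bm u$ in $\mathbb R^d$. The same holds for $\bm v$, so the two "same region" conditions are identical, giving (2) $\Leftrightarrow$ (3).

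The only genuine subtlety — and the step I would be most careful about — is the bookkeeping around the lift $\mathbb R^d \hookrightarrow \mathbb R^{d+1}$, $\bm x \mapsto (\bm x, 0)$: one must check that the last coordinate $b_i$ of the lifted normal vector contributes nothing to the pairing precisely because we translate by vectors of the form $(\bm u, 0)$ with vanishing last coordinate, and that the passage from the polytope presentation to the cone presentation really does preserve both primitivity and irredundancy (so that $N(P)$ and $\widetilde N(P)$ carry the same combinatorial data). Modulo that routine verification, the corollary is immediate. I would phrase the proof in two short paragraphs: first establishing $\widetilde N(P) = \{(\bm a_i, b_i)\}$ and citing Proposition \ref{idealregion} for (1) $\Leftrightarrow$ (2), then the one-line inner-product computation for (2) $\Leftrightarrow$ (3).
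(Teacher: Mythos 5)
Your proposal is correct and follows exactly the paper's own argument: (1) $\Leftrightarrow$ (2) is Proposition \ref{idealregion} applied to $(\bm u,0)$ and $(\bm v,0)$, and (2) $\Leftrightarrow$ (3) follows from the identity $\varphi_{\tilde{\bm n}}(\bm a,0)=\varphi_{\bm n}(\bm a)$, which is the one-line inner-product computation you describe. Your extra care about the sign of the last coordinate of the lifted normals is reasonable but, as you note, immaterial since that coordinate pairs against the vanishing last entry of $(\bm u,0)$.
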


\begin{proof}
The equivalence between (1) and (2) is Proposition \ref{idealregion}.
Let $(\bm a_1,\dots,\bm a_m)$ be the sequence of normal vectors of $P$ and let $((\bm a_1,b_1),\dots,(\bm a_m,b_m))$ be that of $\mathcal C_P$.
The equivalence between (2) and (3) follows from the fact that $\varphi_{(\bm a_1,\dots,\bm a_m)}(\bm x)=\varphi_{((\bm a_1,b_1),\dots,(\bm a_m,b_m))}(\bm x,0)$ for all $\bm x \in \mathbb R^{d}$.
\end{proof}

We now discuss a consequence of Corollary \ref{stdidealregion} to translated lattice point enumerators
and Ehrhart quasi-polynomials.
Recall that
$[\bm x]$ denotes the image of $\bm x \in \mathbb R^d$ by the natural projection $\mathbb R^d \to \mathbb R^d/\mathbb Z^d$.

\begin{theorem}
\label{main:tlpregion}
Let $P \subset \mathbb R^d$ be a rational $d$-polytope
and let $\bm u,\bm v \in \mathbb R^d.$
\begin{itemize}
    \item[(1)] If $[\bm u]$ and $[\bm v]$ belong to the same region in $\Lambda_P/\mathbb Z^d$, then $\TL_{P,\bm u}(t)=\TL_{P,\bm v}(t)$ for all $t \in \mathbb Z_{\geq 0}$.
    \item[(2)] The set $\{\TL_{P,\bm w} \mid \bm w \in \mathbb R^d\}$ is a finite set.
\end{itemize}
\end{theorem}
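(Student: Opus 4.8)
The plan is to deduce Theorem \ref{main:tlpregion} directly from Corollary \ref{stdidealregion} together with the generating-function identity relating $\TL_{P,\bm v}$ to lattice points in the translated cone $\mathcal C_P + (\bm v, 0)$.

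For part (1): suppose $[\bm u]$ and $[\bm v]$ lie in the same region of $\Lambda_P/\mathbb Z^d$. Unwinding the definition of the $\mathbb Z^d$-action, this means there is $\bm w \in \mathbb Z^d$ such that $\bm u$ and $\bm v + \bm w$ belong to the same upper region of $\Lambda_P$. First I would replace $\bm v$ by $\bm v + \bm w$: since $\bm w \in \mathbb Z^d$, the polytopes $tP + \bm v$ and $tP + \bm v + t\bm w$ differ by an integer translation for every $t \in \mathbb Z_{\geq 0}$, so $\TL_{P,\bm v}(t) = \TL_{P, \bm v + \bm w}(t)$; hence it suffices to treat the case $\bm w = \bm 0$, i.e. $\bm u$ and $\bm v$ in the same upper region of $\Lambda_P$. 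By Corollary \ref{stdidealregion} (the equivalence of (1) and (3)), we get $(\mathcal C_P + (\bm u,0)) \cap \mathbb Z^{d+1} = (\mathcal C_P + (\bm v,0)) \cap \mathbb Z^{d+1}$. Now I would invoke the generating-function identity displayed at the start of Section 3: both $\sum_{t \geq 0} \TL_{P,\bm u}(t) z^t$ and $\sum_{t\geq 0}\TL_{P,\bm v}(t)z^t$ are obtained by grading the common lattice-point set of the translated cone by its last coordinate, so the two power series coincide, and comparing coefficients of $z^t$ gives $\TL_{P,\bm u}(t) = \TL_{P,\bm v}(t)$ for all $t \in \mathbb Z_{\geq 0}$.

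For part (2): by part (1), the function $\TL_{P,\bm w}$ depends only on the class of $[\bm w]$ in $\Lambda_P/\mathbb Z^d$, so $\#\{\TL_{P,\bm w} \mid \bm w \in \mathbb R^d\} \leq \#(\Lambda_P/\mathbb Z^d)$. Thus it remains to observe that $\Lambda_P/\mathbb Z^d$ is finite. This follows because $\Lambda_P = \Lambda_{N(P)}$ is the set of nonempty upper regions $U_{\bm c}^{N(P)}$, which are the nonempty cells cut out in $\mathbb R^d$ by the (locally finite) rational hyperplane arrangement $\A_P = \A_{N(P)}$, and $\A_P$ together with its induced decomposition is invariant under translation by $\mathbb Z^d$; the quotient arrangement on the compact torus $\mathbb R^d/\mathbb Z^d$ consists of finitely many hyperplanes and hence cuts the torus into finitely many pieces. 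Concretely: finitely many of the hyperplanes $H_{\bm a_i, k}$ meet the fundamental domain $[0,1)^d$ for each fixed $\bm a_i$, so $\A_P/\mathbb Z^d$ is a finite arrangement, and therefore $\Lambda_P/\mathbb Z^d$ is finite.

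I do not expect a serious obstacle here; the theorem is essentially a packaging of Corollary \ref{stdidealregion}. The only point requiring a little care is the reduction modulo $\mathbb Z^d$ in part (1) — one must check that translating the data by an integer vector genuinely does not change $\TL_{P,\bm v}$, and that the $\mathbb Z^d$-action on $\Lambda_P$ used in the definition of $\Lambda_P/\mathbb Z^d$ is exactly the one $U_{\bm c}^N + \bm v = U_{\bm c + \varphi_N(\bm v)}^N$ recorded earlier in the section, so that ``same region of $\Lambda_P/\mathbb Z^d$'' unambiguously means ``$\bm u$ and some $\mathbb Z^d$-translate of $\bm v$ lie in a common upper region.'' For part (2) the only thing to pin down is the finiteness of a rational hyperplane arrangement on the torus, which is standard. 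Note Theorem \ref{thm:1-2} from the introduction is the special case of part (1) phrased in terms of $\Delta_P/\mathbb Z^d$, so this argument also establishes that.
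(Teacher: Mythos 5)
Your proof is correct and follows essentially the same route as the paper: reduce modulo $\mathbb Z^d$ using the integer-translation invariance of $\TL_{P,\cdot}$, apply Corollary \ref{stdidealregion} to identify the lattice points of the two translated cones, and for (2) observe that only finitely many upper regions meet a fundamental domain of the torus. (One cosmetic slip: for $\bm w \in \mathbb Z^d$ the relevant comparison is between $tP+\bm v$ and $tP+\bm v+\bm w$, not $tP+\bm v+t\bm w$, since the translation vector in $\TL_{P,\bm v}$ is not dilated with $t$; both are integer translates of $tP+\bm v$, so the conclusion is unaffected.)
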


\begin{proof}
(1) Corollary \ref{stdidealregion} says that if $[\bm u]$ and $[\bm v]$ belong to the same region in $\Lambda_P/\mathbb Z^d$,
then 
\[
\big(\mathcal C_P+(\bm u,0) \big)\cap \mathbb Z^{d+1} =\big(\mathcal C_P+(\bm v,0)\big)\cap \mathbb Z^{d+1} +(\intvec,0),\]
where $\intvec \in \mathbb Z^{d}$ is the vector such that $\bm u$ and $\bm v+ \intvec$ belong to the same region of $\Lambda_P$.
Then \eqref{useTHM3.8} implies $\TL_{P,\bm u}(t)=\TL_{P,\bm v}(t)$ for all $t \in \mathbb Z_{\geq 0}$.

(2) 
Since there are only finitely many hyperplanes in $\mathcal A_P$ that intersect $[0,1)^d$,
the set $\{R \in \Lambda_P \mid R \cap [0,1)^d \ne \varnothing \}$ is finite,
which implies that
$\Lambda_{P}/\mathbb Z^d$ is a finite set.
This fact and (1) prove the desired statement.
\end{proof}

\begin{example}
\label{ex4.2}
Consider the trapezoid $T$ from the Introduction.
The set of normal vectors of $T$ is $N(T)=\{(1,0),(0,1),(0,-1),(-1,1)\}$.
Then the set of upper regions $\Lambda_T/\mathbb Z^2$ consists of 4 elements with the following representatives:
{\small
\begin{align*}
    V & = U_{(0,0,0,0)}=\{(x,y) \in \mathbb R^2 \mid -1< x \leq 0,\ -1 < y \leq 0,\ -1 <-y \leq 0,\ -1 < -x+y \leq 0\},\\
    E & = U_{(1,0,0,0)}=\{(x,y) \in \mathbb R^2 \mid 0< x \leq 1,\ -1 < y \leq 0,\ -1 < -y \leq 0,\ -1 < -x+y \leq 0\},\\
    R_1 & = U_{(1,1,0,0)}=\{(x,y) \in \mathbb R^2 \mid 0< x \leq 1,\ 0 < y \leq 1,\ -1<-y \leq 0,\ -1 < -x+y \leq 0\},\\
    R_2 & = U_{(1,1,0,1)}=\{(x,y) \in \mathbb R^2 \mid 0< x \leq 1,\ 0 < y \leq 1,\ -1 < -y \leq 0,\ 0<-x+y\leq 1\}.
\end{align*}
}

\noindent
See Figure \ref{fig3} for the visualization of $\Lambda_T/\mathbb Z^2$ in the torus $\mathbb R^2 /\mathbb Z^2$.
Note that $V$ is a one point set.
Theorem \ref{main:tlpregion} says that $\TL_{T,\bm v}$ only depends on the upper region in $\Lambda_T/\mathbb Z^2$ where $[\bm v]$ belongs,
and the table below is a list of the polynomials $\TL_{T,C}(t)$ in each upper region $C \in \Lambda_T/\mathbb Z^2$.
\bigskip

\begin{center}
    \begin{tabular}{|c|c|}
    \hline
    region & polynomial $\TL_{T,C}(t)$\\
    \hline
    $V$ & {\tiny $\frac 3 2 t^2+\frac 5 2 t+1$}\\
    \hline
    $E$ & {\tiny $\frac 3 2 t^2 + \frac 3 2 t$}\\
    \hline
    $R_1$ & {\tiny $\frac 3 2 t^2 + \frac 1 2 t$}\\
    \hline
    $R_2$ & {\tiny $\frac 3 2 t^2 - \frac  1 2 t$}\\
    \hline
    \end{tabular}
\end{center}

\begin{figure}
\includegraphics[width=12cm,pagebox=cropbox]{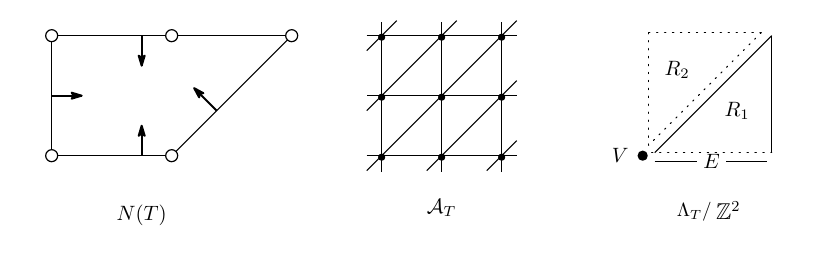}
\caption{$N(T)$, $\mathcal A_T$ and $\Lambda_T/\mathbb Z^2$.}
\label{fig3}
\end{figure}
\end{example}

For a quasi-polynomial $f$,
let $\mathrm{Const}(f)$ be the set of constituents of $f$.
Since the $k$th constituent of $\ehr_{P+\bm v}$ is the $k$th constituent of $\TL_{P,k\bm v}$,
the second statement of the above theorem gives the following finiteness result for constituents of Ehrhart quasi-polynomials of translated polytopes.

\begin{corollary}
\label{finiteness}
If $P \subset \mathbb R^d$ is a rational $d$-polytope, then
\[
\textstyle
\# \big (\bigcup_{\bm v \in \mathbb Q^d} \mathrm{Const}\big(\ehr_{P+\bm v} \big) \big) < \infty.\]
\end{corollary}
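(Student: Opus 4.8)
The plan is to derive the corollary directly from Lemma \ref{ehrToTrans} and the finiteness assertion of Theorem \ref{main:tlpregion}(2), so essentially no new work is required. First I would note that, by Lemma \ref{ehrToTrans}, for every $\bm v \in \mathbb Q^d$ and every $k \in \mathbb Z$ the $k$th constituent of $\ehr_{P+\bm v}$ coincides with the $k$th constituent of $\TL_{P,k\bm v}$. In particular each polynomial lying in $\mathrm{Const}(\ehr_{P+\bm v})$ already lies in $\mathrm{Const}(\TL_{P,\bm w})$ for the single vector $\bm w = k\bm v \in \mathbb R^d$ that realizes it, whence
\[
\bigcup_{\bm v \in \mathbb Q^d} \mathrm{Const}\big(\ehr_{P+\bm v}\big)\ \subseteq\ \bigcup_{\bm w \in \mathbb R^d} \mathrm{Const}\big(\TL_{P,\bm w}\big).
\]
It therefore suffices to show that the right-hand side is finite.

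For the right-hand side, I would invoke Theorem \ref{main:tlpregion}(2): the collection $\{\TL_{P,\bm w}\mid \bm w \in \mathbb R^d\}$ consists of only finitely many functions, say $g_1,\dots,g_N$, one for each region of $\Lambda_P/\mathbb Z^d$. By Theorem \ref{thm:McMullen}(1), each $g_j$ is a quasi-polynomial with period $q$, the denominator of $P$, so $\mathrm{Const}(g_j)$ contains at most $q$ polynomials. Hence $\bigcup_{\bm w \in \mathbb R^d}\mathrm{Const}(\TL_{P,\bm w}) = \bigcup_{j=1}^{N}\mathrm{Const}(g_j)$ is finite, of cardinality at most $Nq$. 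Combined with the displayed inclusion, this yields $\#\big(\bigcup_{\bm v \in \mathbb Q^d}\mathrm{Const}(\ehr_{P+\bm v})\big) \le Nq < \infty$, as desired.

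Since both ingredients are already in hand, there is no genuine obstacle here; the only subtlety worth flagging is that the argument uses that all enumerators $\TL_{P,\bm w}$ admit the \emph{common} period $q$ from McMullen's theorem, so that each of the finitely many distinct enumerators contributes only finitely many constituents. (The weaker statement "finitely many functions, each with finitely many constituents" would suffice even without a uniform period, but the uniform period makes the count explicit and is the cleanest route.) All the real content has thus been pushed into Theorem \ref{main:tlpregion}(2), whose proof in turn rests on the finiteness of the set of upper regions meeting $[0,1)^d$.
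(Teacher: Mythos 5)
Your proof is correct and follows essentially the same route as the paper, which likewise deduces the corollary by combining Lemma \ref{ehrToTrans} with the finiteness of $\{\TL_{P,\bm w}\mid \bm w\in\mathbb R^d\}$ from Theorem \ref{main:tlpregion}(2). The extra remark about the common period $q$ is a harmless (and correct) way of making explicit why each of the finitely many enumerators contributes only finitely many constituents.
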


\subsection{Polyhedral decompositions associated with hyperplane arrangements}

Theorem \ref{main:tlpregion} is slightly different to Theorem \ref{thm:1-2} in the Introduction (indeed the cell complex $\Lambda_T/\mathbb Z^2$ has $4$ cells while $\Delta_T/\mathbb Z^2$ has 6 cells),
but it can be considered as a refined version of Theorem \ref{thm:1-2}.
We explain this in the rest of this section.

Let $P \subset \mathbb R^d$ be a rational $d$-polytope and $N(P)=\{\bm a_1,\dots,\bm a_m\}$.
The arrangement $\A_P$ gives a natural polyhedral decomposition of $\mathbb R^d$ whose maximal open cells are connected components of $\mathbb R^d \setminus (\bigcup_{H \in \A_P} H)$.
We write $\Delta_P$ for this polyhedral complex.
Note that this $\Delta_P$ is the same as the one defined in the Introduction.
Since any half line $\bm v + \{s \bm w \mid s \in \mathbb R_{\geq 0}\}$,
where $\bm v \in \mathbb R^d$ and $\bm 0 \ne \bm w \in \mathbb R^d$,
must hit one of $H_{\bm a_i,k} \in \A_P$, each connected component of $\mathbb R^d \setminus (\bigcup_{H \in \A_P} H)$ is a bounded set, so $\Delta_P$ is actually a polytopal complex.
By the definition of $\A_P$,
each open cell $A$ of $\Delta_P$ can be written in the form
$$A=A_1 \cap A_2 \cap \cdots \cap A_m$$
such that each $A_i$ is either $H_{\bm a_i,k}$ or $\{\bm x \in \mathbb R^d \mid k< (\bm a_i,\bm x)< k+1\}$.
This means that
each upper region in $\Lambda_P$ can be written as a disjoint union of open cells in $\Delta_P$, in particular,
each element in $\Lambda_P/\mathbb Z^d$ can be written as a disjoint union of elements in $\Delta_P /\mathbb Z^d$.
This proves Theorem \ref{thm:1-2}.

\begin{example}
    Consider the trapezoid $T$ from the Introduction.
    As one can see from Figures \ref{trapezoid} and \ref{fig3},
    $\Lambda_T/\mathbb Z^2$ consists of 4 elements $V,E,R_1,R_2$ and $\Delta_T/\mathbb Z^2$ consists of $6$ elements $V_1,E_1,E_2,E_3,F_1,F_2$.
    We have
    \[
    V=V_1,\ E=E_3,\ R_1=E_1 \cup E_2 \cup F_2,\ R_2=F_1.
    \]
\end{example}

While $\Lambda_P$ and $\Delta_P$ are different in general,
there is a nice case that we have $\Lambda_P=\Delta_P$.
If the set of normal vectors of $P$ is the set of the form $\{\pm \bm a_1,\dots,\pm \bm a_l\}$ then each upper region $R \in \Lambda_P$ must be a region of the form
$$
R=\bigcap_{i=1}^m \big\{\bm x \in \mathbb R^d \mid c_i-1< (\bm a_i,\bm x) \leq c_i\ \mbox{ and } c_i'-1 < (-\bm a_i,\bm x) \leq c_i'\big\}.$$
Each non-empty content in the RHS equals either $H_{\bm a_i,c_i}$ or $\{\bm x \in \mathbb R^d \mid c_i-1< (\bm a_i,\bm x)< c_i\}$
so we have $\Lambda_P=\Delta_P$ in that case.
To summarize, we get the following statement.

\begin{proposition}
If $P \subset \mathbb R^d$ is a $d$-polytope with $N(P)=-N(P)$ then $\Lambda_P=\Delta_P$.
\end{proposition}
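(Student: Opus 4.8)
The plan is to show directly that when $N(P) = -N(P)$, every upper region $U_{\bm c}^{N(P)} \in \Lambda_P$ is already an open cell of $\Delta_P$, so that the two polyhedral decompositions coincide. The key observation, already noted informally in the paragraph preceding the statement, is that when the normal vectors come in pairs $\pm \bm a_i$, the pair of constraints $c_i - 1 < (\bm a_i, \bm x) \le c_i$ and $c_i' - 1 < (-\bm a_i, \bm x) \le c_i'$ imposed by the upper region definition together pin down $(\bm a_i, \bm x)$ either to the single value making both inequalities tight — forcing $\bm x \in H_{\bm a_i, c_i}$ with $c_i = -c_i'$ — or to the open interval $(c_i - 1, c_i)$, which is exactly $\{\bm x : c_i - 1 < (\bm a_i, \bm x) < c_i\}$. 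Either way, the constraint contributed by the pair $\{\bm a_i, -\bm a_i\}$ is of the form defining a slab or a bounding hyperplane of $\A_P$, which is precisely the kind of constraint appearing in the description $A = A_1 \cap \cdots \cap A_m$ of an open cell of $\Delta_P$ quoted in the preceding paragraph.

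First I would fix the hypothesis $N(P) = \{\pm \bm a_1, \dots, \pm \bm a_\ell\}$ and take an arbitrary nonempty region $R = U_{\bm c}^{N(P)} \in \Lambda_P$. For each index $i$, let $c_i$ be the entry of $\bm c$ corresponding to $\bm a_i$ and $c_i'$ the entry corresponding to $-\bm a_i$. The constraints are $c_i - 1 < (\bm a_i, \bm x) \le c_i$ and $c_i' - 1 < -(\bm a_i, \bm x) \le c_i'$, i.e. $-c_i' \le (\bm a_i, \bm x) < -c_i' + 1$. Intersecting the half-open interval $(c_i - 1, c_i]$ with the half-open interval $[-c_i', -c_i' + 1)$: if the intervals are disjoint, $R$ is empty, contradiction; if they overlap, the overlap is either the single point $\{c_i\}$ (when $c_i = -c_i'$, giving the hyperplane constraint $(\bm a_i, \bm x) = c_i$) or the open interval $(c_i - 1, c_i)$ (when $-c_i' < c_i$, which combined with $-c_i' + 1 > c_i - 1$ forces $-c_i' = c_i - 1 + 1 = c_i$... so actually the only other possibility is $-c_i' + 1 = c_i$, i.e. the overlap is $(c_i - 1, c_i)$). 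I would write this case analysis out cleanly, concluding that for each $i$ the joint constraint from the pair is either $\bm x \in H_{\bm a_i, c_i}$ or $\bm x \in \{c_i - 1 < (\bm a_i, \bm x) < c_i\}$.

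Then I would simply note that $R = \bigcap_{i=1}^\ell (\text{constraint from pair } i)$ is exactly of the form $A_1 \cap \cdots \cap A_\ell$ with each $A_i$ either a hyperplane $H_{\bm a_i, c_i}$ of $\A_P$ or an open slab between consecutive hyperplanes, which is the defining form of an open cell of $\Delta_P$; hence $R \in \Delta_P$. Conversely, every open cell of $\Delta_P$ arises this way and, running the same computation backwards, is an upper region. This gives $\Lambda_P = \Delta_P$.

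I do not expect a genuine obstacle here — the proof is a short case analysis on intersecting two half-open unit intervals of the form $(c-1, c]$ and $[-c', -c'+1)$. The only point requiring a little care is making sure the case analysis is exhaustive and that the "degenerate" case ($c_i = -c_i'$) is correctly identified as producing the bounding hyperplane rather than an empty set or a half-open segment; once that is pinned down, matching the resulting description against the cell-description of $\Delta_P$ from the preceding paragraph is immediate. I would also remark that this recovers, as a special case, the earlier observation that $\Lambda_P = \Delta_P$ for the relevant symmetric polytopes, and connects to the setting of Theorem \ref{thm:1-6}.
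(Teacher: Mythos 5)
Your proof is correct and is essentially the paper's own argument: the paper disposes of the same interval-intersection observation in two sentences in the paragraph preceding the proposition, and you have simply written the case analysis out in full. The only blemish is the garbled intermediate step where you briefly derive $-c_i'=c_i$ in the slab case before self-correcting to $-c_i'+1=c_i$; the final trichotomy (overlap equal to $\{c_i\}$ when $-c_i'=c_i$, equal to the open interval $(c_i-1,c_i)$ when $-c_i'=c_i-1$, and empty otherwise) is right, and the identification with the cell description of $\Delta_P$ goes through as you say.
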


A typical example of a polytope $P$ satisfying $N(P)=-N(P)$ is a centrally symmetric polytope $P$ with $P=-P$ (or more generally, a polytope $P$ with $P=-P+ \intvec$ for some $\intvec \in \mathbb Z^d$).

\begin{remark}
Each element of $\Delta_P/\mathbb Z^d$ is an open cell ball, so 
$\Delta_P/\mathbb Z^d$ is indeed a CW complex.
To see that each element of $\Delta_P/\mathbb Z^d$ is a ball, it suffices to check that for each $C \in \Delta_P$ the restriction of $\mathbb R^d \to \mathbb R^d / \mathbb Z^d$ to $C$ is injective.
This injectivity follows from Corollary \ref{stdidealregion} since,
if we have $\bm u=\bm v +\intvec$ for some $\bm u,\bm v \in C$ and $\bm 0 \ne \intvec \in \mathbb Z^d$, then $\mathcal C_P+(\bm v,0)$ and $\mathcal C_P+(\bm u,0) =\big(\mathcal C_P+(\bm v,0)\big)+(\intvec,0)$ must have different sets of integer points.
\end{remark}

\section{Some examples}

Let $P \subset \mathbb R^d$ be a rational $d$-polytope.
We saw in the previous section that $\TL_{P,\bm v}$ only depends on the cell $C$ in $\Delta_P/\mathbb Z^d$ (or the upper region $C$ in $\Lambda_P/ \mathbb Z^d$) with $[\bm v] \in C$,
so for $C \in \Delta_P/\mathbb Z^d$ (or $C \in \Lambda_P/\mathbb Z^d$) we write $\TL_{P,C}=\TL_{P,\bm v}$ with $[\bm v ] \in C$.
In this section, we give a few examples of the computations of $\ehr_{P+\bm v}$ using translated lattice points enumerators.

\begin{example}
    \label{ex5.1}
    Consider the lattice parallelogram $P$ with vertices $(0,0),(1,0),(1,3),(2,3)$.
    Then $N(P)=\{(0,1),(0,-1),(3,-1),(-3,1)\}$ and $\Delta_P/\mathbb Z^2$ $(=\Lambda_P/\mathbb Z^2)$ consists of three vertices
    $V_1,V_2,V_3$, $6$ edges $E_1,E_2,\dots,E_6$ and three $2$-dimensional open cells $R_1,R_2,R_3$ shown in Figure \ref{fig4}.
Since $P$ is a lattice polytope,
translated lattice points enumerators of $P$ are actually polynomials.
The table below is a list of the polynomials $\TL_{P,C}(t)$.

\begin{center}
    \begin{tabular}{|c|c|c|}
\hline
 cell     &  polynomial $\TL_{P,C}(t)$ \\
     \hline
$V_1$     &  {\small $3t^2+2t+1$}  \\
\hline
$V_2,V_3$     & {\small $3t^2+2t$}\\
\hline
$E_1,\dots,E_6$     & {\small $3t^2+t$} \\
\hline
$R_1,R_2,R_3$     &  {\small $3t^2$} \\
\hline
\end{tabular}
\end{center}
\medskip

\begin{figure}
    \centering
    \includegraphics[width=12cm,pagebox=cropbox]{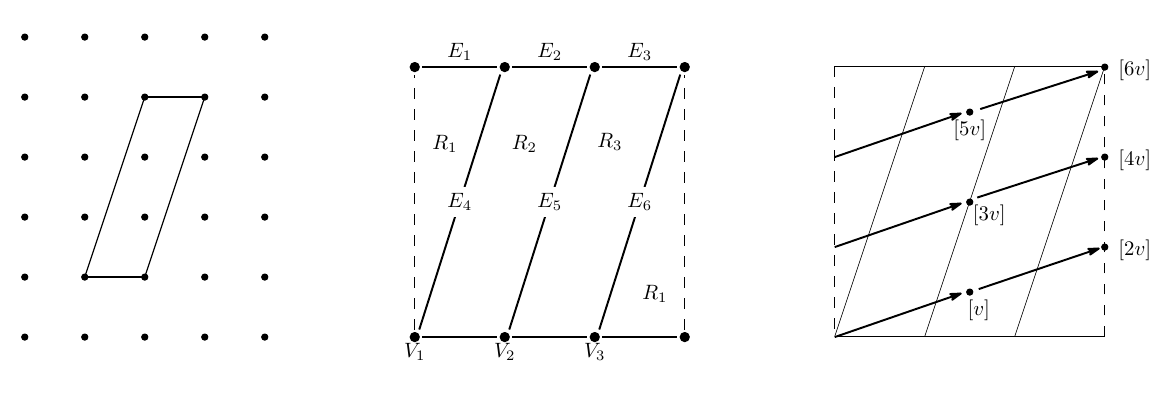}
    \caption{Parallelogram $P$ with vertices $(0,0),(1,0),(1,3),(2,3)$, the cell complex $\Delta_P/\mathbb Z^2$ and positions of $[k\bm v]$ when $\bm v=(\frac 1 3, \frac 1 6)$. All the cells of $\Delta_P/\mathbb Z^2$ in the figure are open cells.}
    \label{fig4}
\end{figure}

Now we compute $\ehr_{P+\bm v}(t)$ when $\bm v=(\frac 1 3, \frac 1 6)$ using this information.
One can compute the constituents of $\ehr_{P+\bm v}$
visually by drawing a line of direction $\bm v$ in $\mathbb R^2/\mathbb Z^2$ and plot the points $[k\bm v]$ for $k=0,1,2,\dots$ as follows.
First, by drawing points $[k\bm v]$ on $\mathbb R^2/\mathbb Z^2$ for $k=0,1,2,\dots$, one can see
\begin{align*}
    [k \bm v] \in
    \begin{cases}
    V_1, & (k \equiv 0 \ (\mathrm{mod}\ 6)),\\
    R_3, & (k \equiv 1 \ (\mathrm{mod}\ 6)),\\
    R_1, & (k \equiv 2,4 \ (\mathrm{mod}\ 6)),\\
    E_5, & (k \equiv 3 \ (\mathrm{mod}\ 6)),\\
    R_2, & (k \equiv 5 \ (\mathrm{mod}\ 6)).
    \end{cases}
\end{align*}
See the second and the third figures in Figure \ref{fig4}.
Lemma \ref{ehrToTrans} says that the $k$th constituent of $\ehr_{P+\bm v}$ is nothing but the $k$th constituent of $\TL_{P,C}$ with $[k\bm v] \in C$.
Hence we get
\begin{align*}
    \ehr_{P+\bm v}(t)=
    \begin{cases}
    \TL_{P,V_1}(t)=3t^2+2t+1, & (t \equiv 0 \ (\mathrm{mod}\ 6)),\\
    \TL_{P,R_3}(t)=3t^2, & (t \equiv 1 \ (\mathrm{mod}\ 6)),\\
    \TL_{P,R_1}(t)=3t^2, & (t \equiv 2,4 \ (\mathrm{mod}\ 6)),\\
    \TL_{P,E_5}(t)=3t^2+t, & (t \equiv 3 \ (\mathrm{mod}\ 6)), \\
    \TL_{P,R_2}(t)=3t^2, & (t \equiv 5 \ (\mathrm{mod}\ 6)).
    \end{cases}
\end{align*}

We remark that parallelogram is a special case of a zonotope,
and a nice combinatorial formula of the Ehrhart quasi-polynomial of a translated integral zonotope is given in \cite[Proposition 3.1]{ABM}.
\end{example}

\begin{example} \label{ex:4.2}
We give a more complicated example.
    Consider the rhombus $Q \subset \mathbb R^2$ having vertices $(\pm 1,0)$ and $(0,\pm \frac 1 2)$.
    Then the cell complex $\Delta_Q/\mathbb Z^2$ consists of four vertices, eight edges and four $2$-dimensional cells. See Figure \ref{fig:rhomb}.
\begin{figure}[h]
    \centering
\includegraphics[width=8cm,pagebox=cropbox]{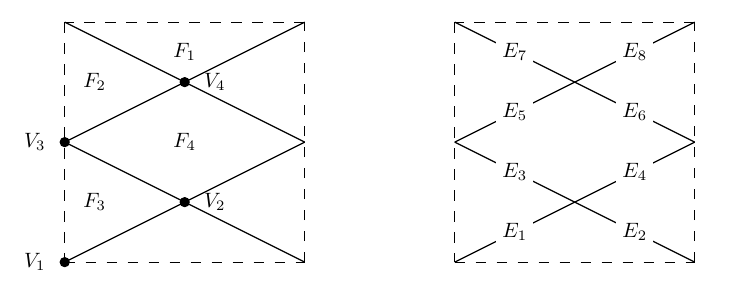}
    \caption{Cell complex associated with $Q$}
    \label{fig:rhomb}
\end{figure}

    Since $2Q$ is integral, $\TL_{Q,C}$ is a quasi-polynomial having period $2$ for each $C \in \Delta_Q/\mathbb Z^2$.
    For a quasi-polynomial $f$ with period $2$,
    we write $f=(f_0,f_1)$, where $f_k$ is the $k$th constituent of $f$. Below is the table of translated lattice points enumerators of $Q$.

    \begin{center}
    \begin{tabular}{|c|c|c|}
\hline
 cell     &  quasi-polynomial $\TL_{Q,C}$ \\
     \hline
$V_1$     &  {\small $(t^2+t+1,t^2+t+1)$}  \\
\hline
$V_2,V_3,V_4$     & {\small $(t^2+t,t^2+t)$}\\
\hline
$E_1,E_2,E_7,E_8$     & {\tiny $(t^2+\frac 1 2 t,t^2+\frac 1 2 t+\frac 1 2)$} \\
\hline
$E_3,E_4,E_5,E_6$     &  {\tiny $(t^2+\frac 1 2 t,t^2+\frac 1 2 t-\frac 1 2 )$} \\
\hline
$F_1$     &  {\small $(t^2,t^2+1)$} \\
\hline
$F_2,F_3$     &  {\small $(t^2,t^2)$} \\
\hline
$F_4$     &  {\small $(t^2,t^2-1)$} \\
\hline
\end{tabular}
\end{center}
\medskip

Consider $\bm u=(\frac 1 8, \frac 1 8)$ and $\bm w =(\frac 1 3, \frac 1 3)$.
Then
\begin{align*}
    [k \bm u] \in
    \begin{cases}
    V_1, & (k \equiv 0 \ (\mathrm{mod}\ 8)),\\
    F_3, & (k \equiv 1,2 \ (\mathrm{mod}\ 8)),\\
    F_4, & (k \equiv 3,4,5 \ (\mathrm{mod}\ 8)),\\
    F_2, & (k \equiv 6,7 \ (\mathrm{mod}\ 8)),
    \end{cases}\ \ \ 
    \mbox{ and } \ \ \ 
    [k \bm w] \in
    \begin{cases}
    V_1, & (k \equiv 0 \ (\mathrm{mod}\ 3)),\\
    E_3, & (k \equiv 1 \ (\mathrm{mod}\ 3)),\\
    E_6, & (k \equiv 2 \ (\mathrm{mod}\ 3)).
    \end{cases}
\end{align*}
Using that the $k$th constituent of $\ehr_{Q+\bm u}$ equals the $k$th constituent of $\TL_{Q,k\bm u}$,
it follows that
\begin{align*}
    \ehr_{Q+\bm u}(t)=
    \begin{cases}
    t^2+t+1, & (t \equiv 0 \ (\mathrm{mod}\ 8)),\\
    t^2, & (t \equiv 1,2,4,6,7 \ (\mathrm{mod}\ 8)),\\
    t^2-1, & (t \equiv 3,5 \ (\mathrm{mod}\ 8)),
    \end{cases}
\end{align*}
and
\begin{align*}
    \ehr_{Q+\bm w}(t)=
    \begin{cases}
    t^2+t+1, & (t \equiv 0,3 \ (\mathrm{mod}\ 6)),\\
    t^2+\frac 1 2 t-\frac 1 2, & (t \equiv 1,5 \ (\mathrm{mod}\ 6)),\\
    t^2+\frac 1 2 t, & (t \equiv 2,4 \ (\mathrm{mod}\ 6)).
    \end{cases}
\end{align*}
One can also see from the second example that the minimum period of $\ehr_{Q+\bm w}$ is not necessary the denominator of $\bm w$.
\end{example}

\section{Reciprocity in maximal regions}
In this section, we explain that the quasi-polynomials $\TL_{P,C}$ for maximal dimensional cells $C \in \Delta_P/\mathbb Z^d$ have a reciprocity which comes from the reciprocity in Theorem \ref{thm:McMullen}.

\subsection{Reciprocity}
Let $P \subset \mathbb R^d$ be a rational $d$-polytope.
The reciprocity in Theorem \ref{thm:McMullen} says that, for any $\bm v \in \mathbb R^d$, one has
\begin{align}
\label{555}    
\TL_{\mathrm{int}(P),\bm v}(t)=(-1)^d \TL_{P,-\bm v}(-t)\ \ \ \mbox{ for } t \in \mathbb Z_{>0}.
\end{align}
Since the affine hyperplane arrangement $\A_P$ is centrally symmetric, that is $-\A_P=\A_P$,
we have $R \in \Delta_P$ if and only if $-R \in \Delta_P$.
For each $C \in \Delta_P/\mathbb Z^d$ with a representative $R \in \Delta_P$,
we write $-C$ for the element of $\Delta_P/\mathbb Z^d$ corresponding to the cell $-R$.
By Theorem \ref{thm:1-2} and \eqref{555}
we have $\TL_{\mathrm{int}(P),\bm v}(t)=\TL_{\mathrm{int}(P),\bm u}(t)$ when $[\bm u]$ and $[\bm v]$ belong to the same cell in $\Delta_P/\mathbb Z^d$.
Thus, for each $C \in \Delta_P/\mathbb Z^d$,
we write $\TL_{\mathrm{int}(P),C}(t)=\TL_{\mathrm{int}(P),\bm v}(t)$ with $[\bm v] \in C$.
Using this notation, \eqref{555} can be written in the following form.

\begin{proposition}
\label{prop5.1}
Let $P \subset \mathbb R^d$ be a rational $d$-polytope.
For any $C\in \Delta_P/\mathbb Z^d$, one has
\[
\TL_{\mathrm{int}(P),C}(t)=
(-1)^d \TL_{P,-C}(-t)
\ \ \ \mbox {for } t \in \mathbb Z_{>0}.
\]
\end{proposition}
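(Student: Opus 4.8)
The plan is to deduce the statement directly from the reciprocity \eqref{555} of Theorem \ref{thm:McMullen} together with Theorem \ref{thm:1-2}, the only subtlety being that the two sides of \eqref{555} involve different objects ($\mathrm{int}(P)$ versus $P$) so I must check that \emph{both} $\TL_{\mathrm{int}(P),\bm v}$ and $\TL_{P,\bm v}$ are constant on the cells of $\Delta_P/\mathbb Z^d$. First I would fix $C \in \Delta_P/\mathbb Z^d$ with a representative $R \in \Delta_P$, pick $\bm v \in \mathbb R^d$ with $[\bm v] \in C$, and note that $[-\bm v] \in -C$ by the central symmetry $-\A_P = \A_P$ already recorded in the text. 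Applying \eqref{555} gives $\TL_{\mathrm{int}(P),\bm v}(t) = (-1)^d \TL_{P,-\bm v}(-t)$ for $t \in \mathbb Z_{>0}$. Then $\TL_{\mathrm{int}(P),\bm v}(t) = \TL_{\mathrm{int}(P),C}(t)$ by definition, and $\TL_{P,-\bm v}(t) = \TL_{P,-C}(t)$ since $[-\bm v] \in -C$ and $\TL_{P,\cdot}$ depends only on the cell by Theorem \ref{thm:1-2}; substituting $-t$ for $t$ in the latter (legitimate because we have extended quasi-polynomials to all of $\mathbb Z$) gives $\TL_{P,-\bm v}(-t) = \TL_{P,-C}(-t)$, and combining yields the claim.

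The one genuine point to verify is that $\TL_{\mathrm{int}(P),\bm v}$ really is constant on each cell of $\Delta_P/\mathbb Z^d$, so that $\TL_{\mathrm{int}(P),C}$ is well-defined; this is asserted in the paragraph preceding the proposition but deserves a sentence. I would obtain it from the reciprocity itself: rewriting \eqref{555} as $\TL_{\mathrm{int}(P),\bm v}(t) = (-1)^d\TL_{P,-\bm v}(-t)$ for $t>0$ expresses the left side, for all positive $t$, in terms of $\TL_{P,-\bm v}$, which by Theorem \ref{thm:1-2} depends only on the cell of $[-\bm v]$, hence only on the cell of $[\bm v]$ by central symmetry. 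Since a quasi-polynomial is determined by its values on $\mathbb Z_{>0}$, this pins down $\TL_{\mathrm{int}(P),\bm v}$ as a function of the cell alone. (Alternatively one could run the cone argument of Section 3 with $\mathcal C_{\mathrm{int}(P)}$ in place of $\mathcal C_P$, but invoking the already-established reciprocity is cleaner and avoids re-doing the half-space bookkeeping.)

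No part of this is really an obstacle: the content is entirely in Theorem \ref{thm:McMullen} and Theorem \ref{thm:1-2}, and the proof is a matter of matching notation and keeping track of the sign flip $\bm v \mapsto -\bm v$ and the dilation flip $t \mapsto -t$. If anything, the mildly delicate step is making sure the identification of $\TL_{\mathrm{int}(P),C}$ as a cell-indexed quasi-polynomial is justified before it is used; once that is in place the displayed identity is immediate from \eqref{555}.
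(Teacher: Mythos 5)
Your argument is correct and is essentially the paper's own: the proposition is obtained by combining McMullen's reciprocity \eqref{555} with Theorem \ref{thm:1-2} and the central symmetry $-\A_P=\A_P$, with the well-definedness of $\TL_{\mathrm{int}(P),C}$ deduced from the reciprocity exactly as you describe. No gaps.
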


Note that the above equation says that
$\TL_{\mathrm{int}(P),C}$ is a quasi-polynomial on $\mathbb Z_{>0}$.

\begin{example}
Consider the trapezoid $T$ in the Introduction.
We have
\[
F_1=-F_2,\ E_1=-E_1,\ E_2=-E_2, E_3=-E_3, V_1=-V_1.
\]
The following tables are lists of polynomials $\TL_{T,C}(t)$ and $\TL_{\mathrm{int}(T),C}(t)$.
\medskip

    \centering
\begin{tabular}{|c|c|c|}
\hline
cell     &  polynomial $\TL_{T,C}(t)$ \\
     \hline
$V_1$     &  {\tiny $\frac 3 2 t^2+ \frac 3 2 t+1$}  \\
\hline
$E_1,E_2$     &  {\tiny $\frac 3 2 t^2+\frac 1 2 t$} \\
\hline
$E_3$     &  {\tiny $\frac 3 2 t^2+\frac 3 2 t$} \\
\hline
$F_1$     &  {\tiny $\frac 3 2 t^2-\frac 1 2 t$} \\
\hline
$F_2$     &  {\tiny $\frac 3 2 t^2+\frac 1 2 t$} \\
\hline
\end{tabular}
\ \ \ \begin{tabular}{|c|c|c|}
\hline
 cell     &  polynomial $\TL_{\mathrm{int}(T),C}(t)$ \\
     \hline
$V_1$     &  {\tiny $\frac 3 2 t^2- \frac 3 2 t+1$}  \\
\hline
$E_1,E_2$     & {\tiny $\frac 3 2 t^2-\frac 1 2 t$ }\\
\hline
$E_3$     & {\tiny $\frac 3 2 t^2-\frac 3 2 t$} \\
\hline
$F_2$     &  {\tiny $\frac 3 2 t^2+\frac 1 2 t$} \\
\hline
$F_1$     &  {\tiny $\frac 3 2 t^2-\frac 1 2 t$} \\
\hline
\end{tabular}
\end{example}

\subsection{Maximal cells}

Let $P \subset \mathbb R^d$ be a rational $d$-polytope with the unique irredundant presentation
\begin{align}
    \label{6-1} P= H_{\bm a_1,b_1}^{\geq} \cap \cdots \cap H_{\bm a_m,b_m}^\geq.
\end{align}
We write $F_i=P \cap H_{\bm a_i,b_i}$ for the facet of $P$ which lies in the hyperplane $H_{\bm a_i,b_i}$.
The next lemma follows from Lemmas \ref{lem:gcd} and \ref{lem:cone}.

\begin{lemma}
\label{lem:6.1}
With the same notation as above,
for $\bm v \in \mathbb R^d$,
the cone $\mathcal C_{F_i} +(\bm v,0)$ contains a lattice point if and only if $\bm v \in H_{\bm a_i,k}$ for some $k \in \mathbb Z$.
\end{lemma}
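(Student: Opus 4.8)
The plan is to use the fact that $\mathcal C_{F_i}$ is a $d$-dimensional cone lying inside the hyperplane $\mathcal C_{H_{\bm a_i,b_i}}$ of $\mathbb R^{d+1}$, which is defined by the primitive equation $(\bm a_i,\bm x) = b_i x_{d+1}$, i.e.\ by the linear form $\ell_i(\bm x, x_{d+1}) = (\bm a_i, \bm x) - b_i x_{d+1}$ with $(\bm a_i, -b_i)$ primitive. The translated cone $\mathcal C_{F_i} + (\bm v, 0)$ lies in the hyperplane $\ell_i = (\bm a_i, \bm v)$, that is, in $H_{(\bm a_i, -b_i), (\bm a_i,\bm v)}$. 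By Lemma \ref{lem:gcd}, applied with the primitive vector $(\bm a_i,-b_i)$ (so $g = 1$), this hyperplane meets $\mathbb Z^{d+1}$ if and only if $(\bm a_i, \bm v) \in \mathbb Z$, i.e.\ if and only if $\bm v \in H_{\bm a_i, k}$ for some $k \in \mathbb Z$. This already proves the ``only if'' direction: if $\mathcal C_{F_i} + (\bm v,0)$ contains a lattice point, then that lattice point lies in $H_{(\bm a_i,-b_i),(\bm a_i,\bm v)}$, forcing $(\bm a_i,\bm v)$ to be an integer.

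For the ``if'' direction, suppose $(\bm a_i,\bm v) = k \in \mathbb Z$. Then $\mathcal C_{F_i} + (\bm v,0)$ is a $d$-dimensional convex cone contained in the rational hyperplane $H_{(\bm a_i,-b_i), k} \subset \mathbb R^{d+1}$. (It is $d$-dimensional because $F_i$ is a facet of the $d$-polytope $P$, hence $(d-1)$-dimensional, so $\mathcal C_{F_i}$ is $d$-dimensional, and translation preserves dimension.) Now apply Lemma \ref{lem:cone} in ambient dimension $d+1$: any $d$-dimensional convex cone inside a rational hyperplane of $\mathbb R^{d+1}$ contains a lattice point. This yields the desired lattice point in $\mathcal C_{F_i} + (\bm v, 0)$.

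The only genuine subtlety — and the step I would be most careful about — is checking that $\mathcal C_{F_i}+(\bm v,0)$ really is a $(d-1)+1 = d$-dimensional cone and that the hyperplane it sits in is the rational hyperplane $H_{(\bm a_i,-b_i),k}$; this relies on the irredundancy of the presentation \eqref{6-1}, which guarantees $F_i$ is an honest facet rather than a lower-dimensional face, and on the identification $\mathcal C_{H_{\bm a_i,b_i}} = H_{(\bm a_i,-b_i),0}$ noted earlier in the paragraph defining $\widetilde N(P)$. Once that is in place, everything else is a direct invocation of Lemmas \ref{lem:gcd} and \ref{lem:cone}, exactly as the statement advertises.
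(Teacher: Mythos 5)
Your argument is correct and is exactly the route the paper intends: the paper gives no details beyond asserting that the lemma ``follows from Lemmas \ref{lem:gcd} and \ref{lem:cone}'', and you supply precisely those details --- the ``only if'' direction from the integrality of $(\bm a_i,-b_i)$ via Lemma \ref{lem:gcd}, and the ``if'' direction by applying Lemma \ref{lem:cone} to the $d$-dimensional cone $\mathcal C_{F_i}+(\bm v,0)$ inside the rational hyperplane $H_{(\bm a_i,-b_i),k}$ of $\mathbb R^{d+1}$. The subtleties you flag (full dimensionality of $F_i$ from irredundancy, and the identification $\mathcal C_{H_{\bm a_i,b_i}}=H_{(\bm a_i,-b_i),0}$) are the right ones and are handled correctly.
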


The lemma says that the cone $\mathcal C_P+(\bm v,0)$ has no lattice points in its boundary if $\bm v \in \mathbb R^d \setminus \bigcup_{H \in \A_P} H$,
equivalently, if $[\bm v]$ belongs to a $d$-dimensional cell of $\Delta_P/\mathbb Z^d$.
Hence we have

\begin{corollary}
\label{cor:TLrecipro}
If $P \subset \mathbb R^d$ is a rational $d$-polytope
and $C$ is a $d$-dimensional cell of $\Delta_P /\mathbb Z^d$, then
\[ \TL_{P,C}(t)=(-1)^d\TL_{P,-C}(-t) \ \ \mbox{ for all } t \in \mathbb Z_{\geq 0}.
\]
\end{corollary}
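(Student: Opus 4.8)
The plan is to read the corollary off from Proposition~\ref{prop5.1} together with the boundary-freeness remark preceding the statement. Proposition~\ref{prop5.1} already gives $\TL_{\mathrm{int}(P),C}(t)=(-1)^d\TL_{P,-C}(-t)$ for $t\in\mathbb Z_{>0}$, so the whole task reduces to establishing
\[
\TL_{P,C}(t)=\TL_{\mathrm{int}(P),C}(t)\qquad\text{for all }t\in\mathbb Z_{>0},
\]
and then upgrading this to an equality on $\mathbb Z_{\geq0}$. For the upgrade I would note that $\TL_{P,C}$ and $t\mapsto(-1)^d\TL_{P,-C}(-t)$ are both quasi-polynomials sharing a common period (the denominator of $P$), so once they agree on $\mathbb Z_{>0}$ — which contains infinitely many integers in each residue class modulo that period — they agree on all of $\mathbb Z$, in particular on $\mathbb Z_{\geq0}$.

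To prove the displayed identity, fix $\bm v\in\mathbb R^d$ with $[\bm v]\in C$. Because $C$ is a $d$-dimensional cell of $\Delta_P/\mathbb Z^d$ we have $\bm v\notin\bigcup_{H\in\A_P}H$, i.e.\ $\bm v\notin H_{\bm a_i,k}$ for every $i$ and every $k\in\mathbb Z$. Now pass to the cone $\mathcal C_P$: since $P$ is a $d$-polytope, $\mathcal C_P$ is full dimensional, its facets are exactly the cones $\mathcal C_{F_i}$ over the facets $F_i$ of $P$, and for $t>0$ the slice of $\mathcal C_P$ (resp.\ of $\mathrm{int}(\mathcal C_P)$) at height $x_{d+1}=t$ is $tP\times\{t\}$ (resp.\ $t\cdot\mathrm{int}(P)\times\{t\}$). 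Hence a point $(\bm p,t)\in\mathbb Z^{d+1}$ with $t>0$ lies in $\mathcal C_P+(\bm v,0)$ but not in $\mathrm{int}(\mathcal C_P)+(\bm v,0)$ exactly when $(\bm p,t)\in\partial\mathcal C_P+(\bm v,0)=\bigcup_i\big(\mathcal C_{F_i}+(\bm v,0)\big)$. By Lemma~\ref{lem:6.1}, each $\mathcal C_{F_i}+(\bm v,0)$ contains no lattice point, because $\bm v$ lies on none of the hyperplanes $H_{\bm a_i,k}$. Therefore $\partial\mathcal C_P+(\bm v,0)$ contains no lattice point, so
\[
\big(\mathcal C_P+(\bm v,0)\big)\cap\mathbb Z^{d+1}=\big(\mathrm{int}(\mathcal C_P)+(\bm v,0)\big)\cap\mathbb Z^{d+1}.
\]
Counting the lattice points of height $t$ on each side yields $\TL_{P,\bm v}(t)=\TL_{\mathrm{int}(P),\bm v}(t)$ for every $t>0$, which is the displayed identity; both sides depend only on the cell $C$ containing $[\bm v]$, by Theorem~\ref{thm:1-2} and its analogue for $\mathrm{int}(P)$ used to define $\TL_{\mathrm{int}(P),C}$.

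I expect the only genuinely delicate point to be the bookkeeping around the boundary of the cone: one must check that $\partial\mathcal C_P$ is covered by the facet cones $\mathcal C_{F_i}$, and that for $t>0$ the height-$t$ slice of $\mathrm{int}(\mathcal C_P)$ is exactly $t\cdot\mathrm{int}(P)$ rather than merely the relative interior of the slice of $\mathcal C_P$ — this is where full-dimensionality of $\mathcal C_P$, hence of $P$, enters. Apart from that, the argument is a direct combination of Lemma~\ref{lem:6.1}, Proposition~\ref{prop5.1}, and the quasi-polynomial rigidity used in the last step, with essentially no computation.
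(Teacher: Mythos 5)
Your proposal is correct and follows essentially the same route as the paper: use Lemma~\ref{lem:6.1} to see that for $[\bm v]$ in a $d$-dimensional cell the boundary of $\mathcal C_P+(\bm v,0)$ contains no lattice points, deduce $\TL_{P,C}=\TL_{\mathrm{int}(P),C}$ on $\mathbb Z_{>0}$, apply Proposition~\ref{prop5.1}, and extend to $t=0$ by quasi-polynomiality. Your extra bookkeeping on the facet decomposition of $\partial\mathcal C_P$ and the height-$t$ slices just makes explicit what the paper leaves implicit.
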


\begin{proof}
    Let $\bm v \in \mathbb R^d$ such that $[\bm v] \in C$.
    Then $\bm v \not \in H$ for any $H \in \A_P$, which implies that the cone $\mathcal C_P+(\bm v,0)$ has no lattice points in its boundary.
    Thus by Proposition \ref{prop5.1} we have
    \[
    \TL_{P,C}(t)=\TL_{\mathrm{int}(P),C}(t)=(-1)^d\TL_{P,-C}(-t) \ \ \ \mbox{ for }t \in \mathbb Z_{>0}.
    \]
    Since $\TL_{P,C}$ and $\TL_{P,-C}$ are quasi-polynomials on $\mathbb Z_{\geq 0}$,
    this implies the desired equality.
\end{proof}

The above reciprocity has a special meaning for centrally symmetric polytopes.
Looking at the quasi-polynomials $\TL_{Q,F_i}$ in Example \ref{ex:4.2}, one may notice that each constituent is a polynomial in $t^2$. In other words, the linear term $t$ vanishes. 
We explain that this has a reason.
We first remind the following easy fact.

\begin{lemma} \label{lem:cs}
    Let $P \subset \mathbb R^d$ be a rational polytope with $-P=P+\intvec$ for some $\intvec \in \mathbb Z^d$. Then $\TL_{P,\bm v}(t)=\TL_{P,-\bm v}(t)$ for any $\bm v \in \mathbb R^d$ and $t \in \mathbb Z_{\geq 0}$.
\end{lemma}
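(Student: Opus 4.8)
The plan is to reduce the statement to the fact that both the negation map $\bm x \mapsto -\bm x$ and integer translations act as bijections of the lattice $\mathbb Z^d$ onto itself. So fix $\bm v \in \mathbb R^d$ and $t \in \mathbb Z_{\geq 0}$ and chase the set $(tP+\bm v)\cap\mathbb Z^d$ through these bijections.

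First I would apply the bijection $\bm x \mapsto -\bm x$ of $\mathbb Z^d$: since it carries $tP+\bm v$ to $-(tP+\bm v)$, it gives $\TL_{P,\bm v}(t)=\#\big((tP+\bm v)\cap\mathbb Z^d\big)=\#\big(-(tP+\bm v)\cap\mathbb Z^d\big)$. Next I would simplify the set $-(tP+\bm v)$: for $t\geq 0$ one has $-(tP+\bm v)=t(-P)-\bm v$, and the hypothesis $-P=P+\bm u$ rewrites this as $t(P+\bm u)-\bm v=tP+(t\bm u-\bm v)$. Because $\bm u\in\mathbb Z^d$, the vector $t\bm u$ lies in $\mathbb Z^d$, so translation by $t\bm u$ is again a bijection of $\mathbb Z^d$; applying it yields $\#\big((tP+t\bm u-\bm v)\cap\mathbb Z^d\big)=\#\big((tP-\bm v)\cap\mathbb Z^d\big)=\TL_{P,-\bm v}(t)$. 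Chaining the three equalities gives $\TL_{P,\bm v}(t)=\TL_{P,-\bm v}(t)$, as required, and the same computation handles $t=0$ with the convention $0\cdot P=\{\bm 0\}$.

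There is essentially no obstacle here — this is the ``obvious fact'' the statement advertises. The only point that should not be glossed over is that $\bm u$ is assumed to be an \emph{integer} vector: this is exactly what makes $t\bm u\in\mathbb Z^d$ and hence keeps translation by $t\bm u$ an automorphism of the lattice. If $\bm u$ were merely rational, the last step would fail and the conclusion would no longer hold for every $t$.
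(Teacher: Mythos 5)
Your proof is correct, and the paper itself offers no proof at all — it labels the lemma an ``obvious fact'' — so your chain of bijections (negation of $\mathbb Z^d$, the identity $-(tP+\bm v)=tP+t\bm u-\bm v$, then translation by the integer vector $t\bm u$) is exactly the intended argument, spelled out. Your closing remark about why $\bm u\in\mathbb Z^d$ is essential is also the right thing to flag.
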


\begin{proof}
The assertion follows since, for each integer $k \geq 0$, the correspondence $\bm x \to -\bm x -k \intvec$
give a bijection between lattice points in $kP+\bm v$
and those in $-kP-\bm v-k\intvec =kP-\bm v$.
\end{proof}

\begin{theorem}
\label{thm:cssymmetry}
Let $P \subset \mathbb R^d$ be a rational $d$-polytope with $-P=P+ \intvec$ for some $\intvec \in \mathbb Z^d$
and let $C \in \Delta_P /\mathbb Z^d$ be a $d$-dimensional cell. 
Let $f(t)$ be the $k$th constituent of $\TL_{P,C}$
and let $g(t)$ be the $(-k)$th constituent of $\TL_{P,C}$. Then
$$f(t)=(-1)^d g(-t).$$
\end{theorem}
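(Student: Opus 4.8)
The plan is to combine the two reciprocities already available: the McMullen-type reciprocity on maximal cells recorded in Corollary \ref{cor:TLrecipro}, and the central-symmetry identity of Lemma \ref{lem:cs}. First I would observe that since $C$ is a $d$-dimensional cell, Corollary \ref{cor:TLrecipro} applies and gives
\[
\TL_{P,C}(t)=(-1)^d \TL_{P,-C}(-t)\qquad\text{for all }t\in\mathbb Z_{\geq 0}.
\]
Next, because $-P=P+\bm u$ with $\bm u\in\mathbb Z^d$, Lemma \ref{lem:cs} yields $\TL_{P,\bm v}=\TL_{P,-\bm v}$ for every $\bm v$; picking $\bm v$ with $[\bm v]\in C$, the point $[-\bm v]$ lies in $-C$, so $\TL_{P,-C}=\TL_{P,C}$ as quasi-polynomials. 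Substituting this into the displayed identity collapses it to
\[
\TL_{P,C}(t)=(-1)^d \TL_{P,C}(-t)\qquad\text{for all }t\in\mathbb Z_{\geq 0},
\]
now an identity relating $\TL_{P,C}$ to itself.

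The remaining step is purely about constituents. If $q$ is a common period, write $\TL_{P,C}$ via its constituents $h_j$. Evaluating the self-reciprocity at $t\equiv k\pmod q$ and using that $-t\equiv -k\pmod q$, the left side is $f(t)$ (the $k$th constituent) and the right side is $(-1)^d h_{-k}(-t)=(-1)^d g(-t)$ where $g=h_{-k}$ is the $(-k)$th constituent. Since this holds for infinitely many integers $t$, and both sides are polynomials in $t$, the polynomial identity $f(t)=(-1)^d g(-t)$ follows. I should be slightly careful here: the self-reciprocity was derived for $t\in\mathbb Z_{\geq 0}$, but that is already an infinite set, which suffices to equate the two polynomials $f(t)$ and $(-1)^d g(-t)$; equivalently one can note $\TL_{P,C}$ and $\TL_{P,-C}$ are honest quasi-polynomials on all of $\mathbb Z$ under our conventions, so the identity extends to every $t\in\mathbb Z$ and hence holds constituent-by-constituent.

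I do not anticipate a genuine obstacle: the theorem is essentially a bookkeeping consequence of the two lemmas, and the only subtlety is being precise about how an identity of quasi-polynomials in $t$ translates into an identity of individual constituents (matching $k$ with $-k$, and $t$ with $-t$). One small point worth stating explicitly is why $[-\bm v]\in -C$ when $[\bm v]\in C$: by definition $-C$ is the class of $-R$ for a representative $R\in\Delta_P$ of $C$, and $-\A_P=\A_P$ makes $-R$ a cell of $\Delta_P$; since $\bm v$ (up to an integer vector) lies in $R$, $-\bm v$ lies in $-R$, giving $[-\bm v]\in -C$. With that noted, the argument above completes the proof.
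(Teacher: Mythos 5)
Your proposal is correct and follows exactly the paper's own route: combine Corollary \ref{cor:TLrecipro} with Lemma \ref{lem:cs} to get the self-reciprocity $\TL_{P,C}(t)=(-1)^d\TL_{P,C}(-t)$, then read off the statement constituent by constituent. The extra care you take about why $[-\bm v]\in -C$ and how the $k$th and $(-k)$th constituents match under $t\mapsto -t$ is exactly the bookkeeping the paper leaves implicit.
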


\begin{proof}
    Corollary \ref{cor:TLrecipro} and Lemma \ref{lem:cs} say
    \[
    \TL_{P,C}(t)=(-1)^d \TL_{P,-C}(-t)=(-1)^d \TL_{P,C}(-t) \ \ \mbox{ for all }t \in \mathbb Z_{\geq 0}.
    \]
By considering the $k$th constituent in the above equality, we get the desired assertion.
\end{proof}

If a polynomial $f(t)$ of degree $d$ satisfies
$f(t)=(-1)^d f(-t)$,
then it must be a polynomial in $t^2$ when $d$ is even 
and $t$ times a polynomial in $t^2$ when $d$ is odd.
Hence we get the following corollary,
which explains a reason why we get polynomials in $t^2$ in Example \ref{ex:4.2}.

\begin{corollary}
    With the same notation as in Theorem \ref{thm:cssymmetry},    
\begin{itemize}
    \item [(1)] the $0$th constituent of $\TL_{P,C}(t)$ is either a polynomial in $\mathbb Q[t^2]$ or $t \mathbb Q[t^2]$;
    \item[(2)] if $2P$ is integral, then the $1$st constituent of $\TL_{P,C}(t)$ is either a polynomial in $\mathbb Q[t^2]$ or $t \mathbb Q[t^2]$.
\end{itemize}
\end{corollary}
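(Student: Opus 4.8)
The plan is to deduce the corollary directly from Theorem~\ref{thm:cssymmetry}, together with the elementary observation about polynomials $f$ of degree $d$ satisfying $f(t) = (-1)^d f(-t)$ that is recorded just above its statement. So the real content is already contained in Theorem~\ref{thm:cssymmetry}; what remains is, in each of the two cases, to exhibit an index $k$ for which the $k$th and the $(-k)$th constituents of $\TL_{P,C}$ coincide, and then to unwind the parity argument.

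First I would isolate the elementary fact. If $f(t) = \sum_{i=0}^{d} c_i t^i$ satisfies $f(t) = (-1)^d f(-t)$, then comparing coefficients gives $c_i = (-1)^{d-i} c_i$ for every $i$, hence $c_i = 0$ whenever $d-i$ is odd. Thus every monomial occurring in $f$ has exponent congruent to $d \pmod 2$: when $d$ is even this says $f \in \mathbb Q[t^2]$, and when $d$ is odd it says $f \in t\mathbb Q[t^2]$. In either case $f$ lies in $\mathbb Q[t^2] \cup t\mathbb Q[t^2]$, which is exactly the conclusion we want for the relevant constituent.

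For part~(1), I would take $k = 0$ in Theorem~\ref{thm:cssymmetry}. Since the $0$th constituent and the $(-0)$th constituent of $\TL_{P,C}$ are literally the same polynomial $f$, the theorem yields $f(t) = (-1)^d f(-t)$, and the elementary fact applies. For part~(2), I would first observe that if $2P$ is integral then the denominator of $P$ divides $2$, so by Theorem~\ref{thm:McMullen}(1) the quasi-polynomial $\TL_{P,C}$ has period dividing $2$; consequently $1 \equiv -1$ modulo a period of $\TL_{P,C}$, so the $1$st constituent and the $(-1)$th constituent of $\TL_{P,C}$ coincide. Calling this common polynomial $f$ and applying Theorem~\ref{thm:cssymmetry} with $k = 1$ again gives $f(t) = (-1)^d f(-t)$, and the elementary fact finishes the argument.

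I do not anticipate a genuine obstacle; the only point needing care is the bookkeeping in part~(2). The hypothesis that $2P$ is integral is precisely what forces the period to be small enough that the $1$st and $(-1)$th constituents agree, which is what permits Theorem~\ref{thm:cssymmetry} to be invoked at $k=1$. Without that hypothesis the $1$st constituent need not be its own mirror under $t \mapsto (-1)^d(-t)^{\bullet}$, and the stated conclusion for the $1$st constituent would fail, so the assumption cannot be dropped.
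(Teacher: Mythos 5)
Your proposal is correct and follows exactly the paper's route: the elementary parity fact about polynomials with $f(t)=(-1)^d f(-t)$ (stated just above the corollary), $k=0$ for part (1), and the observation that integrality of $2P$ forces period $2$ so the $1$st and $(-1)$th constituents coincide for part (2). Nothing to add.
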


Note that when $2P$ is integral the quasi-polynomial $\TL_{P,C}$ has period 2, so its $1$st constituent equals its $(-1)$th constituent.

\section{Translated lattice points enumerators determine polytopes}

It is clear that if $P =Q+\intvec$ for some integer vector $\intvec$,
then $\TL_{P,\bm v}=\TL_{Q,\bm v}$ for all  vectors $\bm v$.
The goal of this section is to prove the converse of this simple fact,
which is equivalent to Theorem \ref{thm:1-4} in the Introduction by Lemma \ref{ehrToTrans}\footnote{The condition ``$\TL_{P,\bm v}=\TL_{Q,\bm v}$ for all $\bm v \in \mathbb Q^d$" is equivalent to the condition ``$\TL_{P,\bm v}=\TL_{Q,\bm v}$ for all $\bm v \in \mathbb R^d$".}.

\begin{theorem}
\label{thm:6-1}
    Let $P$ and $Q$ be rational $d$-polytopes in $\mathbb R^d$. If $\TL_{P,\bm v}(t)=\TL_{Q,\bm v}(t)$ for all $\bm v \in \mathbb R^d$ and $t \in \mathbb Z_{\geq 0}$ then $P=Q+ \intvec$ for some $\intvec \in \mathbb Z^d$.
\end{theorem}

To simplify notation, we use the notation
$$\Gamma_P=\big\{\big(\bm v,\TL_{P,\bm v}(t)\big) \in \mathbb R^d \times \mathcal {QP}\mid \bm v\in \mathbb R^d\big\},$$
where $\mathcal {QP}$ is the set of all quasi-polynomials in $t$.
Thus,
 what we want to prove is that $\Gamma_P=\Gamma_Q$ implies $P=Q+\intvec$ for some $\intvec \in \mathbb Z^d$.

To prove the theorem,
we first recall Minkowski's theorem,
which says that normal vectors and volumes of facets determine a polytope.
Let $P \subset \mathbb R^d$ be a $d$-polytope with irredundant presentation $P=\bigcap_
{i=1}^m H_{\bm a_i,b_i}^\geq$,
where $\|\bm a_i\|=1$,
and let $F_i=P \cap H_{\bm a_i,b_i}$
be the facet of $P$ which lies in the hyperplane $H_{\bm a_i,b_i}$.
We write
$$\mathcal M(P)=\big\{\big(\bm a_1,\mathrm{vol}(F_1)\big),\dots,(\bm a_m,\mathrm{vol}(F_m))\big\},$$
where $\mathrm{vol}(F_i)$ is the relative volume of $F_i$.
The following result is known as Minkowski's theorem (see \cite[\S6.3 Theorem 1]{Alexandrov}).

\begin{theorem}[Minkowski]
\label{thm:Minkowski}
If $P$ and $Q$ are $d$-polytopes in $\mathbb R^d$ with $\mathcal M(P)=\mathcal M(Q)$, then $P=Q+\bm v$ for some $\bm v \in \mathbb R^d$.
\end{theorem}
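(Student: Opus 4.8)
The plan is to derive this uniqueness statement from the Brunn--Minkowski inequality, in the classical form known as Minkowski's first inequality for mixed volumes. First I would restate the hypothesis measure-theoretically. Write $\bm n_i=-\bm a_i$ for the outward unit facet normals of $P$ (the paper's $\bm a_i$ are inward); then $\mathcal M(P)=\mathcal M(Q)$ says exactly that $P$ and $Q$ have the same finite set of outward facet normal directions $\bm n_1,\dots,\bm n_m$ --- these are pairwise distinct for each polytope, so the pairing with facet volumes is unambiguous --- and equal facet volumes $v_i:=\mathrm{vol}(F_i^P)=\mathrm{vol}(F_i^Q)$. Equivalently, the surface area measures on $\mathbb S^{d-1}$ agree: $S(P,\cdot)=\sum_i v_i\,\delta_{\bm n_i}=S(Q,\cdot)$.

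Next I would compute two mixed volumes. For convex bodies $A,B$ in $\mathbb R^d$ with $h_B$ the support function of $B$ one has the integral representation $V_1(A,B)=\frac1d\int_{\mathbb S^{d-1}}h_B\,dS(A,\cdot)$; for a polytope $A$ this reads $V_1(A,B)=\frac1d\sum_i h_B(\bm n_i^A)\,\mathrm{vol}(F_i^A)$, and in particular $V_1(A,A)=V(A)$ (this last identity is also the divergence-theorem formula $d\,V(A)=\sum_i h_A(\bm n_i^A)\mathrm{vol}(F_i^A)$, valid for any choice of origin). Since $S(P,\cdot)=S(Q,\cdot)$, substituting gives $V_1(Q,P)=\frac1d\sum_i h_P(\bm n_i)v_i=V(P)$ and, symmetrically, $V_1(P,Q)=V(Q)$. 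Now I invoke Minkowski's first inequality $V_1(A,B)^d\ge V(A)^{d-1}V(B)$, with equality (for bodies of nonempty interior) if and only if $B$ is a translate of a positive dilate of $A$. Applied to $(A,B)=(Q,P)$ it gives $V(P)^d=V_1(Q,P)^d\ge V(Q)^{d-1}V(P)$, hence $V(P)\ge V(Q)$; applied to $(P,Q)$ it gives the reverse inequality, so $V(P)=V(Q)$. Therefore equality holds, so $P=\lambda Q+\bm w$ for some $\lambda>0$ and $\bm w\in\mathbb R^d$; comparing volumes forces $\lambda^d=1$, i.e.\ $\lambda=1$, and $P=Q+\bm w$, as desired.

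The hard part is Minkowski's first inequality together with its equality characterization. The inequality itself drops out of concavity of $t\mapsto V((1-t)A+tB)^{1/d}$ (the Brunn--Minkowski inequality) by differentiating at $t=0$; but the equality statement --- that equality forces $A$ and $B$ to be homothetic --- needs the sharp form of Brunn--Minkowski, which is the genuinely nontrivial input, and which I would quote from a standard reference (e.g.\ Schneider's book) rather than reprove. Along the way two routine points must be checked: that the common outward normals of a $d$-polytope positively span $\mathbb R^d$, so that $S(P,\cdot)$ is not supported in a closed hemisphere and the mixed-volume setup is nondegenerate (immediate from boundedness of $P$); and that the representation $V_1(A,B)=\frac1d\sum_i h_B(\bm n_i^A)\mathrm{vol}(F_i^A)$ is indeed correct, for which I would cite the basic theory of mixed volumes of polytopes. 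An alternative line, closer to Aleksandrov's treatment cited in the paper, would replace the Brunn--Minkowski input with his uniqueness lemma for the Minkowski problem --- a maximum-principle argument on pairs of polytopes with prescribed facet normals --- which avoids mixed-volume inequalities altogether but is longer to set up; I would present the Brunn--Minkowski proof as the main route.
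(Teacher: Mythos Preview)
The paper does not prove this theorem at all; it simply quotes it as a known result, with a reference to Alexandrov's \emph{Convex Polyhedra}. So there is no ``paper's own proof'' to compare against.

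Your argument via Minkowski's first inequality is correct and is the standard modern proof of uniqueness in the Minkowski problem. The chain $S(P,\cdot)=S(Q,\cdot)\Rightarrow V_1(Q,P)=V(P),\ V_1(P,Q)=V(Q)\Rightarrow V(P)=V(Q)\Rightarrow$ equality in Minkowski's first inequality $\Rightarrow$ homothety $\Rightarrow$ translation is exactly right, and you have correctly flagged that the substantive input is the equality case of Brunn--Minkowski, which you would cite. You also correctly note that Alexandrov's own treatment (the reference the paper cites) proceeds differently, via a direct comparison/maximum-principle argument on polytopes with prescribed normals rather than through mixed-volume inequalities; either route is acceptable here since the paper only needs the statement as a black box.
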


To apply Minkowski's theorem in our situation,
we will show that we can know volumes of facets of a polytope from translated lattice points enumerator on codimension $1$ cells of $\Delta_P/\mathbb Z^d$.
We say that a point $\bm x \in H_{\bm a,k} \in \A_P$ is {\bf generic} in $\A_P$ if $\bm x \not \in H$ for any $H \in \A_P$ with $H \ne H_{\bm a,k}$.
Note that $\bm x \in H_{\bm a,k}$ is generic if and only if it is contained in a $(d-1)$-dimensional cell of $\Delta_P$.

\begin{lemma}
    \label{lem:codim1}
    Let $P \subset \mathbb R^d$ be a rational $d$-polytope,
    $\bm a \in N(P)$,
    and let $F$ be a facet of $P$ corresponding to the normal vector $\bm a$.
    If $\bm v \in H_{\bm a,k}$ is generic in $\A_P$, then for all sufficiently small $\varepsilon>0$, one has
    \begin{itemize}
        \item [(1)]
        $\TL_{P,\bm v}-\TL_{P,\bm v+\varepsilon \bm a}=\TL_{F,\bm v} \ne 0$.
        \item[(2)] $\TL_{P,\bm v}-\TL_{P,\bm v-\varepsilon \bm a}=0$ if there is no $c \in \mathbb R_{>0}$ such that $- c\bm a \in N(P)$. 
    \end{itemize}
\end{lemma}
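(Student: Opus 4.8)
The plan is to analyze how lattice points enter or leave the translated cone $\mathcal C_P + (\bm v, 0)$ as we move $\bm v$ slightly in the direction $\pm \bm a$, using the facet description of $\mathcal C_P$ established in Corollary \ref{stdidealregion} and Lemma \ref{lem:6.1}. Write $P = H_{\bm a_1,b_1}^\geq \cap \cdots \cap H_{\bm a_m,b_m}^\geq$ irredundantly with $\bm a = \bm a_1$, say, so that $\mathcal C_P = H_{(\bm a_1,-b_1),0}^\geq \cap \cdots$. Fix $\bm v \in H_{\bm a,k}$ generic in $\A_P$; by Lemma \ref{lem:6.1} the only facet hyperplane of $\mathcal C_P$ that can meet the lattice in the boundary over $(\bm v,0)$ is the one corresponding to $\bm a$, and by genericity $(\bm v,0)$ lies strictly inside all the other facet halfspaces' boundaries at the relevant integer levels. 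Choosing $\varepsilon>0$ small enough that $\bm v + s\bm a$ crosses no hyperplane of $\A_P$ other than $H_{\bm a,k}$ for $0<|s|\le\varepsilon$, the combinatorial type of the translated cone relative to the lattice changes only through the single halfspace $H_{\bm a_1, (\bm a_1, \bm v + s\bm a)}^\geq$ (in the $\mathcal C_P$ picture).

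For part (1): as $\bm v$ moves to $\bm v + \varepsilon\bm a$, the value $(\bm a, \bm v + \varepsilon\bm a)$ increases strictly past the integer value $(\bm a,\bm v)=k$ (note $(\bm a,\bm a)>0$), so by the ceiling description $\lceil(\bm a_1,\bm v+\varepsilon\bm a)\rceil$ jumps up by one while all other coordinates of $\lceil\varphi_{\bm n}(\cdot)\rceil$ stay fixed; hence, using \eqref{eqcone}, the lattice points of $\mathcal C_P + (\bm v,0)$ that are lost are exactly those lying on the hyperplane $H_{(\bm a_1,-b_1), k'}$ bounding that facet, i.e. the lattice points of the translated facet cone $\mathcal C_F + (\bm v,0)$. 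This gives $\TL_{P,\bm v}(t) - \TL_{P,\bm v+\varepsilon\bm a}(t) = \TL_{F,\bm v}(t)$. That this difference is nonzero follows because $F$ is a genuine facet: $\mathcal C_F$ is $d$-dimensional in its hyperplane, so $\mathcal C_F + (\bm v,0)$ contains a lattice point by Lemma \ref{lem:cone} (this is the content of Lemma \ref{lem:6.1}'s ``if'' direction, and $\bm v \in H_{\bm a,k}$), hence $\TL_{F,\bm v}$ takes a positive value for $t$ large.

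For part (2): moving to $\bm v - \varepsilon\bm a$ decreases $(\bm a,\bm v - \varepsilon\bm a)$ strictly below $k$, so $\lceil(\bm a_1, \bm v - \varepsilon\bm a)\rceil$ stays equal to $k$ — the ceiling does not drop because we have not reached the next integer down. Thus, provided there is no facet with normal vector a positive multiple of $-\bm a$ (so that moving in the $-\bm a$ direction cannot cause a \emph{different} facet hyperplane's value to hit an integer from below either — this is exactly where the hypothesis in (2) is used), all coordinates of $\lceil\varphi_{\bm n}(\bm v - s\bm a)\rceil$ are unchanged for $0<s\le\varepsilon$, so by Corollary \ref{stdidealregion} the translated cones $\mathcal C_P+(\bm v,0)$ and $\mathcal C_P+(\bm v-\varepsilon\bm a,0)$ have the same lattice points and $\TL_{P,\bm v}-\TL_{P,\bm v-\varepsilon\bm a}=0$.

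The main obstacle I expect is bookkeeping the genericity argument carefully: I need to confirm that a single small step in direction $\pm\bm a$ really changes only the $\bm a_1$-coordinate of the ceiling vector, which requires (a) genericity of $\bm v$ in $\A_P$ to keep all $(\bm a_i,\bm v)$ with $i\ne 1$ away from integers, and (b) the hypothesis of (2) to rule out the degenerate possibility that some $\bm a_i$ is a negative multiple of $\bm a_1$, in which case stepping in $-\bm a$ would push $(\bm a_i, \cdot)$ \emph{up} across an integer and delete the corresponding facet's lattice points. One should also double-check that ``$\TL_{F,\bm v}\ne 0$'' is interpreted as ``not the zero quasi-polynomial,'' which is what is needed in the subsequent application to Minkowski's theorem; this follows from $F$ being $(d-1)$-dimensional together with Lemma \ref{lem:cone}.
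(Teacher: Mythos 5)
Your argument is correct, and it establishes both parts of the lemma by the same core mechanism as the paper: identify the lattice points gained or lost under the shift $\bm v\mapsto\bm v\pm\varepsilon\bm a$ as exactly the lattice points of the translated facet cone $\mathcal C_F+(\bm v,0)$, with genericity guaranteeing that no other constraint is affected, and with the antiparallel normal $-k\bm a\in N(P)$ singled out as the only possible obstruction in part (2). The bookkeeping, however, is organized differently. The paper writes $(\mathcal C_P+(\bm v,0))\cap\mathbb Z^{d+1}$ as the union of the interior with the (one or two) boundary facet cones that contain lattice points, and then invokes Lemma \ref{gcdcor} to choose $\varepsilon$ so that the shifted facet hyperplanes miss the lattice; this forces a case split in part (1) according to whether a facet $G\ne F$ orthogonal to $\bm a$ exists. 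You instead track the ceiling vector $\lceil\varphi_{\bm n}(\cdot)\rceil$ from Proposition \ref{idealregion} and equation \eqref{eqcone}: only the coordinate attached to $\bm a$ can jump, it jumps up by one under $+\varepsilon\bm a$ and is unchanged under $-\varepsilon\bm a$ (absent an antiparallel normal), and the discarded lattice points are precisely those on $H_{(\bm a,-b),k}$, i.e.\ those of $\mathcal C_F+(\bm v,0)$. This buys a uniform treatment of part (1) with no case distinction and replaces the $\varepsilon$-perturbation lemma by the already-proved upper-region criterion; the cost is that you must justify (as you do) that an antiparallel $\bm a_i=-c\bm a$ cannot be excluded by genericity alone, since $H_{\bm a_i,-ck}$ may coincide with $H_{\bm a,k}$ as a set. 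One small caution: your opening claim that ``the only facet hyperplane that can meet the lattice in the boundary over $(\bm v,0)$ is the one corresponding to $\bm a$'' is false precisely in that antiparallel case, but since your actual argument runs through the ceiling coordinates rather than through that claim, the proof stands.
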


\begin{proof}
The fact that $\TL_{F,\bm v} \ne 0$ follows from Lemma \ref{lem:6.1}.
By Lemma \ref{gcdcor}, there is an $\varepsilon>0$
such that
\begin{align*}
\partial \big(\scC_P+(\bm v+s \bm a,0)\big) \cap \mathbb Z^{d+1}
=
\partial \big(\scC_P+(\bm v-s \bm a,0)\big) \cap \mathbb Z^{d+1}=\varnothing
\ \ \ \mbox{for all }0 < s \leq \varepsilon.
\end{align*}
Let $\scC_\varepsilon^+=\scC_P+(\bm v+\varepsilon \bm a,0)$
and $\scC_\varepsilon^-=\scC_P+(\bm v-\varepsilon \bm a,0)$.
By the above equation, we have
\begin{itemize}
    \item[(i)] $(\scC_\varepsilon^+ \cup  \scC_\varepsilon^-) \cap \mathbb Z^{d+1} \subset ( \scC_P + (\bm v,0)\big)\cap \mathbb Z^{d+1}$;
    \item[(ii)] $\mathrm{int}
    \big(\scC_P+(\bm v,0)\big)\cap  \mathbb Z^{d+1} \subset \big( \scC_\varepsilon^+ \cap \scC_\varepsilon^-\big)\cap \mathbb Z^{d+1}$.  
\end{itemize}
Also, regarding lattice points in the boundary of $\scC_P+(\bm v,0)$, we have 
\begin{itemize}
    \item[(iii)] if $\bm x$ is a lattice point in a facet 
$(\scC_P \cap H_{(\bm b,k),0}) +(\bm v,0)$ of $\scC_P+(\bm v,0)$ with $H_{(\bm b,k),0}^{\geq } \supset \scC_P$, then
\begin{align}
\label{6.3-2}
\bm x \in \scC_\varepsilon^{+} \ \ \Leftrightarrow
(\bm b, \bm a) < 0
\ \ \ \mbox{ and }\ \ \ 
\bm x \in \scC_\varepsilon^{-} \ \ \Leftrightarrow
(\bm b, \- \bm a) > 0.
\end{align}
In particular, since $\scC_\varepsilon^+$ and $\scC_\varepsilon^-$ have no lattice points in their boundaries,
a lattice point in the boundary of $\scC_P+(\bm v,0)$ is contained in exactly one of $\scC_\varepsilon^+$ and $\scC_\varepsilon^-$.\end{itemize}

Now we assume that $F$ is the only facet of $P$ that is orthogonal to $\bm a$ and prove (1) and (2).
Observe $\scC_F=\scC_P \cap H_{(\bm a,k),0}$ for some $k \in \mathbb R$.
By the assumption and Lemma \ref{lem:6.1},
$\mathcal \scC_{F}+(\bm v,0)$ is the only facet of $\mathcal \scC_P+(\bm v,0)$ that contains lattice points, so
\begin{align}
\label{6.3-3}   
\partial \big(\mathcal \scC_P +(\bm v,0) \big) \cap \mathbb Z^{d+1}
=
\big(\mathcal \scC_F+(\bm v,0) \big) \big) \cap \mathbb Z^{d+1}.
\end{align}
On the other hand, lattice points in $\scC_P+(\bm v,0)$ are not contained in $\scC_\varepsilon^+$ by (iii),
so by (i) and (ii) we have
\begin{align}
\label{6.3-4}
\big(\scC_P+(\bm v+\varepsilon \bm a,0)\big) \cap \mathbb Z^{d+1}
=\scC_\varepsilon^+\cap \mathbb Z^{d+1}= \mathrm{int}\big( \scC_P+(\bm v,0) \big) \cap \mathbb Z^{d+1}.
\end{align}
Then the equations \eqref{6.3-3} and \eqref{6.3-4} prove (1).
Similarly, 
all lattice points in $\scC_P+(\bm v,0)$ are contained in $\scC_\varepsilon^-$ by (iii),
so again by (i) and (ii) we have
\[
\big(\scC_P+(\bm v-\varepsilon \bm a,0)\big) \cap \mathbb Z^{d+1}
=\scC_\varepsilon^-\cap \mathbb Z^{d+1}
=
\big( \scC_P+(\bm v,0) \big) \cap \mathbb Z^{d+1},
\]
proving (2).

Second, we assume that there is a facet $G \ne F$ of $P$ that is orthogonal to $\bm a$.
This condition is equivalent to the condition that there is $c \in \mathbb R_{>0}$ such that $-c \bm a \in N(P)$.
Also,
the normal vector corresponding to the facet $G$ must be equal to $-c\bm a$
and by the assumption and Lemma \ref{lem:6.1} we have
\begin{align}
\label{6.3-5}    
\partial \big(\mathcal C_P +(\bm v,0) \big) \cap \mathbb Z^{d+1}
=
\big(\big(\mathcal C_F+(\bm v,0) \big) \cap \mathbb Z^{d+1} \big)
\cup \big(\big(\mathcal C_G+(\bm v,0) \big)\big) \cap \mathbb Z^{d+1}\big).
\end{align}
The property (iii) says
\[
\big(\big(\scC_F+(\bm v,0) \big)\cap
\mathbb Z^{d+1} \big)\cap \scC_\varepsilon^+=\varnothing
\mbox{ and }
\big(\scC_G+(\bm v,0) \big)\cap
\mathbb Z^{d+1} \subset \scC_\varepsilon^+.\]
Then
by (i), (ii) and \eqref{6.3-5},
we have
\begin{align*}
\big(\mathcal C_P +(\bm v+ \varepsilon \bm a,0)\big) \cap \mathbb Z^{d+1}
&= \big( \mathrm{int}\big(\mathcal C_P+(\bm v,0)\big) \cap \mathbb Z^{d+1}\big)\cup
\big(\big(\mathcal C_G +(\bm v,0)\big)\cap \mathbb Z^{d+1}\big)\\
&= \big( \big(\mathcal C_P+(\bm v,0)\big) \cap \mathbb Z^{d+1}\big)\setminus
\big(\big(\mathcal C_F +(\bm v,0)\big)\cap \mathbb Z^{d+1}\big)
\end{align*}
proving (1).
\end{proof}

\begin{lemma}
    \label{7.3}
If $P$ and $Q$ are rational $d$-polytopes in $\mathbb R^d$
with $\Gamma_P=\Gamma_Q$,
then $\mathcal M(P)=\mathcal M(Q)$.
\end{lemma}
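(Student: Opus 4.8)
Note first that $\Gamma_P=\Gamma_Q$ is exactly the statement $\TL_{P,\bm v}=\TL_{Q,\bm v}$ for every $\bm v\in\mathbb R^d$. The plan is to recover both the set $N(P)$ of primitive integer facet normals of $P$ and the relative volume of each facet of $P$ from the family $\{\TL_{P,\bm v}\}_{\bm v\in\mathbb R^d}$ alone; since this family coincides with $\{\TL_{Q,\bm v}\}_{\bm v}$, that yields $N(P)=N(Q)$ together with matching facet volumes, which is precisely $\mathcal M(P)=\mathcal M(Q)$ (and then Theorem~\ref{thm:Minkowski} finishes Theorem~\ref{thm:6-1}).

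First I would recover $N(P)$. For a primitive $\bm a\in\mathbb Z^d$, call $\bm v\in\mathbb R^d$ \emph{$\bm a$-special} if $(\bm a,\bm v)=0$ and $(\bm b,\bm v)\notin\mathbb Z$ for every primitive integer vector $\bm b$ not parallel to $\bm a$; such $\bm v$ are dense in $H_{\bm a,0}$. The point is that an $\bm a$-special $\bm v$ lies on $H_{\bm a,0}$ and on no other hyperplane of $\A_P$ when $\bm a\in N(P)$ or $-\bm a\in N(P)$, and on no hyperplane of $\A_P$ at all when $\pm\bm a\notin N(P)$. Fixing such a $\bm v$: if $\bm a\in N(P)$ then $\bm v$ is generic in $\A_P$, so Lemma~\ref{lem:codim1}(1) gives $\TL_{P,\bm v}-\TL_{P,\bm v+\varepsilon\bm a}=\TL_{F,\bm v}\neq 0$ for all small $\varepsilon>0$ (with $F$ the facet of $P$ normal to $\bm a$); if $-\bm a\in N(P)$ but $\bm a\notin N(P)$, then Lemma~\ref{lem:codim1}(2) applied to $P$ with normal vector $-\bm a$ (its hypothesis holds because $\bm a$ is primitive and $\bm a\notin N(P)$) gives $\TL_{P,\bm v}-\TL_{P,\bm v+\varepsilon\bm a}=0$ for small $\varepsilon$; and if $\pm\bm a\notin N(P)$, then $\bm v$ lies in the interior of its region of $\Lambda_P$, so $\TL_{P,\bm v}-\TL_{P,\bm v+\varepsilon\bm a}=0$ for small $\varepsilon$ by Theorem~\ref{main:tlpregion}. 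Hence $\bm a\in N(P)$ if and only if $\TL_{P,\bm v}-\TL_{P,\bm v+\varepsilon\bm a}\neq 0$ for some $\bm a$-special $\bm v$ and all small $\varepsilon>0$ --- a condition depending only on $\{\TL_{P,\bm w}\}_{\bm w}$. So $N(P)=N(Q)$, and in particular $\A_P=\A_Q$.

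Next I would match facet volumes. Fix $\bm a\in N(P)=N(Q)$, let $F\subset P$ and $G\subset Q$ be the facets normal to $\bm a$, and let $\bm v$ be $\bm a$-special. Applying Lemma~\ref{lem:codim1}(1) to $P$ and to $Q$ with a common small $\varepsilon>0$,
\[
\TL_{F,\bm v}=\TL_{P,\bm v}-\TL_{P,\bm v+\varepsilon\bm a}=\TL_{Q,\bm v}-\TL_{Q,\bm v+\varepsilon\bm a}=\TL_{G,\bm v},
\]
so $\TL_{F,\bm v}$ and $\TL_{G,\bm v}$ are equal quasi-polynomials and thus share the same leading coefficient $\ell$. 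That $\ell$ equals the relative volume of $F$ with respect to the lattice $\mathbb Z^d\cap H_{\bm a,0}$ is the $(d-1)$-dimensional case of the Remark following Lemma~\ref{ehrToTrans}, after a $\mathbb Q$-linear identification of $H_{\bm a,0}$ with $\mathbb R^{d-1}$ carrying $\mathbb Z^d\cap H_{\bm a,0}$ to $\mathbb Z^{d-1}$ (legitimate for all $t$ since $\bm v\in H_{\bm a,0}$ keeps $tF+\bm v$ inside a hyperplane containing lattice points); the same holds for $G$. Since $\mathbb Z^d\cap H_{\bm a,0}$ has covolume $\|\bm a\|$ in $H_{\bm a,0}$, the Euclidean $(d-1)$-volume of $F$ is $\|\bm a\|\,\ell$, and likewise for $G$, so $\mathrm{vol}(F)=\mathrm{vol}(G)$. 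As $F$ and $G$ also share the unit normal $\bm a/\|\bm a\|$, letting $\bm a$ range over $N(P)$ gives $\mathcal M(P)=\mathcal M(Q)$.

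The routine points I would then have to check are: that $\bm a$-special points exist and are dense in $H_{\bm a,0}$ (the complement there is a countable union of hyperplanes of $H_{\bm a,0}$, so Baire applies); that $\mathbb Z^d\cap H_{\bm a,0}$ has covolume $\|\bm a\|$ when $\bm a$ is primitive; and the identification of $\TL_{F,\bm v}$ with a genuine $(d-1)$-dimensional translated lattice point enumerator. I expect the only real subtlety to be the recovery of $N(P)$: one cannot simply declare $N(P)$ to be ``the directions $\bm a$ along which some $\TL_{P,\bm v}$ jumps'', since moving a point that lies on a facet hyperplane in almost any direction changes the lattice count; the $\bm a$-special points are designed precisely so that such a point meets only the single hyperplane $H_{\bm a,0}$ of $\A_P$, which is what makes Lemma~\ref{lem:codim1}(1) versus~(2) able to separate ``$\bm a\in N(P)$'' from ``$-\bm a\in N(P)$ and $\bm a\notin N(P)$''.
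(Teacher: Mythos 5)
Your proof is correct and takes essentially the same route as the paper: both arguments recover $N(P)$ from the jump behaviour $\TL_{P,\bm v}-\TL_{P,\bm v+\varepsilon\bm a}$ at generic points of $H_{\bm a,0}$ via Lemma \ref{lem:codim1}, and then read off the relative facet volumes as leading coefficients of $\TL_{F,\bm v}$ viewed as a $(d-1)$-dimensional translated lattice point enumerator. The only cosmetic difference is that you test each candidate primitive $\bm a$ directly with ``$\bm a$-special'' points, whereas the paper first recovers the arrangement $\A_P$ from the local constancy of $\bm v\mapsto\TL_{P,\bm v}$ and then picks generic points on its hyperplanes.
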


\begin{proof}
What we must prove is that the set $\Gamma_P$ determines the directions of inner normal vectors of $P$ as well as volumes of the facets of $P$.

By Theorem \ref{main:tlpregion}(1) and Lemma \ref{lem:codim1}(1), $\bm x \in \mathbb R^d\setminus \bigcup_{H \in \A_P} H$ if and only if there is an open ball $B \ni \bm x$ such that $\TL_{P,\bm x}=\TL_{P,\bm y}$ for all $\bm y \in B$.
This says that the set $\Gamma_P$ determines $\A_P$,
and the definition of $\A_P$ says that $\A_P$ determines the set $\overline N=\{\pm (\bm a/\|\bm a\|) \mid \bm a \in N(P)\}$.
For each $\bm a \in \overline N$, Lemma \ref{lem:codim1} also says $c \bm a \in N(P)$ for some 
$c>0$ if and only if, for a generic $\bm x \in H_{\bm a,0} \in \A_P$,
we have $\TL_{P,\bm x} \ne \TL_{P,\bm x+ \varepsilon \bm a}$ for a sufficiently small $\varepsilon>0$.
Hence the set $\Gamma_P$ determines $\{(\bm a/\|\bm a\|) \mid \bm a \in N(P)\}$.

It remains to prove that $\Gamma_P$ determines the volumes of facets of $P$.
Let $F$ be a facet of $P$ and let $\bm a \in N(P)$
be the normal vector associated with the facet $F$.
For any $\bm v \in \mathbb R^d$,
let $\TL_{P,\bm v}^{0}(t)$ denote the $0$th constituent of $\TL_{P,\bm v}$,
which must be a degree $d$ polynomial whose leading coefficient is the normalized volume of $P$.
If we take a generic point $\bm x \in H_{\bm a,0}$ in $\A_P$, then by Lemma \ref{lem:codim1} we have
\[
\textstyle
\mathrm{lim}_{t \to \infty} \frac 1 {t^{d-1}} \TL^{0}_{F,\bm x}(t)
= \mathrm{lim}_{t \to \infty} \frac 1 {t^{d-1}} \big(\TL^{0}_{P,\bm x}(t)-\TL^{0}_{P,\bm x+\varepsilon \bm a}(t) \big),
\]
where $\varepsilon>0$ is sufficiently small.
Since $\TL_{P,\bm x}^{0}(t)-\TL^{0}_{P,\bm x+\varepsilon \bm a}(t)$ is a polynomial of degree $\leq d-1$, this limit exists and must be equal to the relative volume of $F$ since $\TL_{F,\bm x}$ can be considered as a translated lattice points enumerator in the Euclidean space $H_{\bm a,0} \cong \mathbb R^{d-1}$ with the lattice $H_{\bm a,0}\cap \mathbb Z^d \cong \mathbb Z^{d-1}$.
Thus volumes of facets of $P$ are determined by $\Gamma_P$.
\end{proof}

\begin{remark}
If one know $\Gamma_P$ then we can know the volume of $P$ since it appears in the leading coefficient of a constituent of $\mathrm{TL}_{P,\bm v}$.
There is another way to compute the volume of $P$ that was considered in \cite{alh}.
Let $P\subset \mathbb R^d$ be a rational convex polytope.
For each cell $C \in \Delta_P/\mathbb Z^d$,
we call the number $\mathrm{TL}_{P,-C}(1)$
the multiplicity of $C$.
This number is indeed the multiplicity in the sense that, if $\rho :\mathbb R^d \to \mathbb R^d/\mathbb Z^d$ is the natural projection, then
for $[\bm v] \in \mathbb R^d/\mathbb Z^d$ one has
\[
\mathrm{TL}_{P,-\bm v}(1)=\#((P-\bm v) \cap \mathbb Z^d)=
\#(P \cap (\bm v+\mathbb Z^d))=
\# (\rho^{-1}([\bm v])).
\]
This equation says that the volume of $P$ equals to the sum of volumes of (maximal dimensional) cells of $\Delta_P/\mathbb Z^d$ times their multiplicities.
Volumes of facets of $P$ can be also computed using similar argument given in the proof of Lemma \ref{lem:codim1}.
\end{remark}

Let $\pi_i: \mathbb R^d \to \mathbb R^{d-1}$ be the projection given by
$$\pi_i(x_1,\dots,x_d)=(x_1,\dots,x_{i-1},x_{i+1},\dots,x_d).$$
We next show that translated lattice points enumerators of $\pi_i(P)$ can be determined from those of $P$.
Let $P \subset \mathbb R^d$ be a $d$-polytope.
We define
$$\partial_i^-P=\{ \bm x \in P \mid \bm x \not \in (P + \varepsilon \mathbf {e}_i) \mbox{ for all } \varepsilon >0\},$$
where $\mathbf e_1,\dots,\mathbf e_d$ are the standard vectors of $\mathbb R^d$.
Intuitively, $\partial_i^-P$ is the set of points in $P$ which is visible from $-\infty \mathbf e_i$ (see Figure \ref{fig6}).
Indeed, $\partial_i^- P$ has the following description:
Let $\mathrm{Facets}(P)$ be the set of facets of $P$
and assume $P=\bigcup_{F \in \mathrm{Facets}(P)} H^\geq _{\bm a_F,b_F}$.
Then, for any $\bm x \in P$ and $\varepsilon \in \mathbb R$,
we have $\bm x \not \in P+\varepsilon \mathbf e_i$
if and only if $(\bm a_F,\bm x)-\varepsilon (\bm a_F,\mathbf e_i)= (\bm a_F,\bm x-\varepsilon \mathbf e_i) < b_F$ for some $F \in \mathrm{Facets}(P)$.
This means
\begin{align}    \label{7-1}
    \partial_i^- P= \bigcup_{ F \in \mathrm{Facets}(P),\ (\bm a_F,\mathbf e_i)>0} F,
\end{align}
and the RHS of the above equation is nothing but the set of points in $P$ which is visible from $-\infty \mathbf e_i$ (see \cite[\S 5.2]{Gru:book} for more information on visible faces of a polytope).

\begin{figure}[h]
    \centering
    \includegraphics{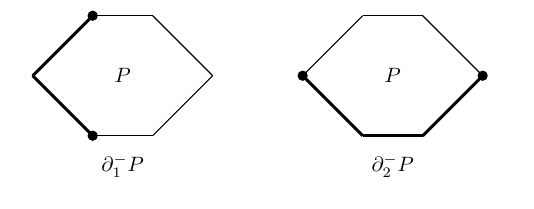}
    \caption{Visualizations of $\partial_1^- P$ and $\partial_2^- P$ when $P$ is the hexagon with vertices $\pm (2,0),\pm (1,1),\pm (1,-1)$. Thick lines correspond to $\partial_1^- P$ and $\partial_2^- P$.}
    \label{fig6}
\end{figure}

\begin{lemma}
    \label{enumProjection}
    With the same notation as above,
    for any $\bm v \in \mathbb R^d$, there is an $\varepsilon_{i,\bm v}>0$ such that
    $$
    (tP+\bm v) \cap \mathbb Z^d=
    \left(\big( t(\partial_i^- P)+\bm v \big) \bigsqcup \big( t P + \bm v + \varepsilon_{i,\bm v} \mathbf e_i \big) \right)\cap \mathbb Z^d
    \ \ \mbox{ for all $t  \in \mathbb Z_{\geq 0}.$}$$
\end{lemma}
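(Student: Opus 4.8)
The plan is to decompose the polytope $tP+\bm v$ according to which lattice points are "hidden" from below in the $i$th coordinate direction, and to absorb the remaining part into a slightly translated copy of $tP+\bm v$. First I would use the description \eqref{7-1}, namely that $\partial_i^-P = \bigcup_{F \in \mathrm{Facets}(P),\ (\bm a_F,\mathbf e_i)>0} F$ is exactly the union of facets whose inner normal $\bm a_F$ has positive $i$th component, equivalently the part of the boundary of $P$ visible from $-\infty\mathbf e_i$. Fix $t\in\mathbb Z_{\geq 0}$ and $\bm v\in\mathbb R^d$; I would argue that the three sets on the two sides differ only by what happens on $t(\partial_i^-P)+\bm v$ and by a small perturbation. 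Concretely, a lattice point $\bm p\in(tP+\bm v)\cap\mathbb Z^d$ either lies on $t(\partial_i^-P)+\bm v$, or it lies in the complement, and in the latter case $\bm p-\varepsilon\mathbf e_i$ still lies in $tP+\bm v$ for all sufficiently small $\varepsilon>0$ (equivalently $\bm p\in tP+\bm v+\varepsilon\mathbf e_i$), while on $t(\partial_i^-P)+\bm v$ this fails. This dichotomy already gives the set-theoretic picture; the disjointness of the union on the right-hand side is immediate from the same dichotomy, since no lattice point can simultaneously lie on $t(\partial_i^-P)+\bm v$ and in $tP+\bm v+\varepsilon\mathbf e_i$ once $\varepsilon$ is small.

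The key technical point, analogous to Lemma \ref{gcdcor} and the arguments in Lemma \ref{lem:codim1}, is that $\varepsilon$ can be chosen \emph{uniformly} in a way that works for all the relevant lattice points at once. Since $tP+\bm v$ has finitely many lattice points and finitely many facets, I would proceed as follows. For each facet $F$ with $(\bm a_F,\mathbf e_i)>0$ lying in a hyperplane $H_{\bm a_F,c}$ (translated appropriately by $\bm v$ and scaled by $t$), translating by $\varepsilon\mathbf e_i$ moves this hyperplane strictly in the direction of decreasing $(\bm a_F,\cdot)$, so $H_{\bm a_F,c}+\varepsilon\mathbf e_i$ misses the affine hull of that facet; choosing $\varepsilon$ small enough (smaller than the gap to the next lattice hyperplane parallel to $H_{\bm a_F,c}$, using Lemma \ref{gcdcor} applied with $\bm w=\mathbf e_i$) ensures $t(\partial_i^-P)\cap H_{\bm a_F,c}+\bm v+\varepsilon\mathbf e_i$ contains no lattice points, i.e. the facet lattice points of $\partial_i^-$-type drop out. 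For facets $F$ with $(\bm a_F,\mathbf e_i)\leq 0$, translation by $\varepsilon\mathbf e_i$ keeps the closed half-space $H_{\bm a_F,\cdot}^{\geq}$ containing these facets (when $(\bm a_F,\mathbf e_i)<0$ it actually pushes inward, and when $(\bm a_F,\mathbf e_i)=0$ it slides the facet within its own hyperplane), so no interior lattice point of $tP+\bm v$ is lost. Taking $\varepsilon_{i,\bm v}$ to be the minimum of the finitely many thresholds arising this way yields the claimed identity.

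The main obstacle I anticipate is making the uniform choice of $\varepsilon_{i,\bm v}$ fully rigorous: one must verify that a single $\varepsilon$ simultaneously (a) removes precisely the lattice points on $t(\partial_i^-P)+\bm v$ from $tP+\bm v$, and (b) removes \emph{nothing else} — in particular no lattice point in the relative interior of $tP+\bm v$ nor on a facet with $(\bm a_F,\mathbf e_i)\leq 0$ should be lost, and no lattice point outside $tP+\bm v$ should be gained. Point (b) for the facets with $(\bm a_F,\mathbf e_i)=0$ is the subtlest: such a facet is parallel to $\mathbf e_i$, so translation by $\varepsilon\mathbf e_i$ slides lattice points along the facet's hyperplane, and one must check these points stay inside $tP+\bm v$; this follows because $\bm p$ being in the (relatively open) part of that facet not meeting $\partial_i^-P$ means its $\mathbf e_i$-translate by a small amount remains in $P$ by a local convexity argument, but it needs care near lower-dimensional faces. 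Once the uniform $\varepsilon$ is pinned down, the set equality — and hence the lemma — follows by the dichotomy above. An alternative, perhaps cleaner, route would be an induction on $\dim P$ peeling off one facet of $\partial_i^-P$ at a time in the spirit of the proof of Lemma \ref{lem:codim1}, but the uniform-$\varepsilon$ formulation seems most direct given what has already been established.
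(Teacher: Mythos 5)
Your mechanism — perturb by $\varepsilon\mathbf e_i$, observe that facets with $(\bm a_F,\mathbf e_i)>0$ lose exactly their lattice points while nothing else is lost or gained, using a no-lattice-points-in-a-thin-slab argument in the spirit of Lemma \ref{gcdcor} — is the right one, and your treatment of the three cases $(\bm a_F,\mathbf e_i)>0$, $<0$, $=0$ is sound. But there is a genuine gap in the quantifier order. The lemma asserts a single $\varepsilon_{i,\bm v}>0$ that works \emph{for all} $t\in\mathbb Z_{\geq 0}$ simultaneously, whereas you fix $t$ at the outset and then take the minimum of "finitely many thresholds" attached to the lattice points and facets of $tP+\bm v$. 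As written, your $\varepsilon$ depends on $t$: the facet of $tP+\bm v$ with normal $\bm a_F$ lies in $H_{\bm a_F,\,tb_F+(\bm a_F,\bm v)}$, and the admissible range of $\varepsilon$ is governed by the smallest $s>0$ with $tb_F+(\bm a_F,\bm v)+s(\bm a_F,\mathbf e_i)\in\mathbb Z$ (Lemma \ref{lem:gcd}). You never argue that these thresholds have a positive infimum over all $t$. The obstacle you flag as "the main obstacle" (uniformity over lattice points and facets for fixed $t$) is not the one the statement actually requires; uniformity in $t$ is.

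The gap is fixable in two ways. The ad hoc fix within your argument: since $t\in\mathbb Z$ and $b_F\in\mathbb Z$, the fractional part of $tb_F+(\bm a_F,\bm v)$ equals that of $(\bm a_F,\bm v)$ and is independent of $t$, so the threshold for each facet direction is the same for every $t$ and a single $\varepsilon_{i,\bm v}$ suffices. The cleaner fix, which is what the paper does, is to run the entire argument once on the cone $\mathcal C_P+(\bm v,0)\subset\mathbb R^{d+1}$: Lemma \ref{gcdcor} applied to the finitely many facet hyperplanes of the cone yields one $\varepsilon$ for which
\[
\big(\mathcal C_P+(\bm v,0)\big)\cap\mathbb Z^{d+1}
=\Big(\big(\mathcal C_P+(\bm v+\varepsilon\mathbf e_i,0)\big)\sqcup\{\text{points not in }\mathcal C_P+s(\mathbf e_i,0)\text{ for any }s>0\}\Big)\cap\mathbb Z^{d+1},
\]
and slicing by $x_{d+1}=t$ produces the claimed identity for every $t$ at once. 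I recommend rewriting your proof in the cone formulation; it eliminates the $t$-dependence by construction and matches the setup already used in Lemma \ref{lem:codim1}.
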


We note that when $t=0$, we consider that $t(\partial^-_i P)=\{0\}$ in Lemma \ref{enumProjection}.

\begin{proof}
By Lemma \ref{gcdcor} there is an $\varepsilon>0$ such that
{\small
\begin{align*}
&\big( \mathcal C_P +(\bm v,0 ) \big) \cap \mathbb Z^{d+1}\\
&=\left(\big( \mathcal C_P\!+\! (\bm v \! +\! \varepsilon \mathbf e_i
,0) \big)\bigsqcup
\big\{\bm x\! +\!(\bm v,0) \in \mathcal C_P \!+ \!(\bm v,0) \mid
\bm x \not \in \mathcal C_P \! +\! s(\mathbf e_i,0) \
\mbox{ for all }s>0\big\}
\right) \cap \mathbb Z^{d+1}.
\end{align*}
}
\hspace{-6pt}
Cutting the above equation by the hyperplane $x_{d+1}=t$,
we get the desired equality.
\end{proof}

We define $\TL_{P,\bm v}^{(-i)}(t)$ by
\[\TL_{P,\bm v}^{(-i)}(t)=
\# \big(\big( t(\partial_i^- P)+ \bm v \big) \cap \mathbb Z^{d}\big).\]
Lemma \ref{enumProjection} says that
\[\TL_{P,\bm v}^{(-i)}(t)
=\TL_{P,\bm v}(t)-\TL_{P,\bm v+\varepsilon_{i,\bm v} \mathbf e_i}(t),\]
where $\varepsilon_{i,\bm v}$ is a number given in Lemma \ref{enumProjection}.
We note that the function $\TL_{P,\bm v}^{(-i)}$ is zero for almost all $\bm v \in \mathbb R^d$.
Indeed,
we have the following statement.

\begin{lemma}
    \label{7-5}
    With the same notation as above,
    $\TL_{P,\bm v}^{(-i)}$ is not a zero function only when there is $\bm a \in N(P)$ and $k \in \mathbb Z$ such that $\bm v \in H_{\bm a,k}$ and $(\bm a,\mathbf e_i)>0$.
\end{lemma}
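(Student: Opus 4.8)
The plan is to unwind the definition of $\TL_{P,\bm v}^{(-i)}$ using the description \eqref{7-1} of $\partial_i^- P$ as a union of facets $F$ with $(\bm a_F, \mathbf e_i) > 0$. If $\TL_{P,\bm v}^{(-i)}$ is not identically zero, then by definition there is some $t \in \mathbb Z_{\geq 0}$ and some lattice point in $t(\partial_i^- P) + \bm v$; since $\partial_i^- P$ is a finite union of facets, that lattice point lies in $t F + \bm v$ for some facet $F$ with $(\bm a_F, \mathbf e_i) > 0$.

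Next I would translate ``$tF + \bm v$ contains a lattice point for some $t$'' into a condition on $\bm v$. For this I would pass to the cone picture, exactly as in the proof of Lemma~\ref{lem:6.1}: a lattice point in $tF + \bm v$ for some $t \geq 0$ corresponds to a lattice point in $\mathcal C_F + (\bm v, 0)$ (cutting by the hyperplane $x_{d+1} = t$). So it suffices to show: if $\mathcal C_F + (\bm v, 0)$ contains a lattice point, then $\bm v \in H_{\bm a_F, k}$ for some $k \in \mathbb Z$. But this is precisely the content of Lemma~\ref{lem:6.1} (the ``only if'' direction), applied to the facet $F$ with normal vector $\bm a_F = \bm a \in N(P)$. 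Combining, we conclude that $\bm v \in H_{\bm a, k}$ for some $k \in \mathbb Z$ and some $\bm a = \bm a_F \in N(P)$ with $(\bm a, \mathbf e_i) > 0$, which is exactly the claimed conclusion.

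The only mild subtlety is the bookkeeping between the two equivalent formulations of $\TL_{P,\bm v}^{(-i)}$: the direct definition $\TL_{P,\bm v}^{(-i)}(t) = \#\big((t(\partial_i^- P) + \bm v) \cap \mathbb Z^d\big)$ versus the difference formula $\TL_{P,\bm v}(t) - \TL_{P,\bm v + \varepsilon_{i,\bm v}\mathbf e_i}(t)$ coming from Lemma~\ref{enumProjection}. I would work with the direct definition, since there the union-of-facets description \eqref{7-1} applies immediately. No real obstacle is expected here; the proof is a short deduction from Lemma~\ref{lem:6.1} and \eqref{7-1}, and the ``hard part'' is merely making sure the facet $F$ whose cone contains the lattice point is one of those contributing to $\partial_i^- P$, i.e. satisfies $(\bm a_F, \mathbf e_i) > 0$, which is automatic from the definition of $\partial_i^- P$.

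\begin{proof}
Suppose $\TL_{P,\bm v}^{(-i)}$ is not the zero function, so there is $t \in \mathbb Z_{\geq 0}$ and $\bm p \in \big(t(\partial_i^- P) + \bm v\big) \cap \mathbb Z^d$. By \eqref{7-1}, $\partial_i^- P = \bigcup_{F \in \mathrm{Facets}(P),\ (\bm a_F, \mathbf e_i) > 0} F$, so $\bm p \in tF + \bm v$ for some facet $F$ of $P$ with $(\bm a_F, \mathbf e_i) > 0$. Then $(\bm p, t) \in \mathcal C_F + (\bm v, 0)$, so $\mathcal C_F + (\bm v, 0)$ contains a lattice point. By Lemma~\ref{lem:6.1} (applied to $\bm a = \bm a_F \in N(P)$), this forces $\bm v \in H_{\bm a_F, k}$ for some $k \in \mathbb Z$. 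Setting $\bm a = \bm a_F$ gives $\bm a \in N(P)$, $\bm v \in H_{\bm a, k}$, and $(\bm a, \mathbf e_i) > 0$, as desired.
\end{proof}
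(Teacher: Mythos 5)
Your proof is correct and follows essentially the same route as the paper: reduce the nonvanishing of $\TL_{P,\bm v}^{(-i)}$ to the cone $\mathcal C_{\partial_i^-P}+(\bm v,0)$ containing a lattice point, then apply \eqref{7-1} together with Lemma \ref{lem:6.1}. Your version is slightly more explicit in pinning down the contributing facet $F$ (and correctly states the conclusion as $\bm v \in H_{\bm a,k}$, where the paper's proof has an apparent typo ``$\not\in$''), but the argument is the same.
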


\begin{proof}
We have $\TL_{P,\bm v}^{(-i)} \ne 0$ only when $$\big( \mathcal C_{\partial_i^-P} + (\bm v,0) \big) \cap \mathbb Z^{d+1} \ne  \varnothing.$$
    By \eqref{7-1} and Lemma \ref{lem:6.1} this condition is equivalent to $\bm v  \in H_{\bm a,k}$ for some $\bm a \in N(P)$ and $k \in \mathbb Z$ with $(\bm a,\mathbf e_i)>0$.
\end{proof}

The next proposition shows that translated lattice points enumerators of $\pi_i(P)$ can be determined from those of $P$.

\begin{proposition}
\label{7.6}
    Let $P \subset \mathbb R^d$ be a rational $d$-polytope. For any $\bm v \in \mathbb R^d$ and $t \in \mathbb Z_{\geq 0}$, one has
    $$\TL_{\pi_i(P),\pi_i(\bm v)}(t)= \sum_{0 \leq s <1,\ \TL_{P,\bm v + s \mathbf e_i}^{(-i)}(t) \ne 0} \TL_{P,\bm v+ s \mathbf e_i}^{(-i)}(t).$$
\end{proposition}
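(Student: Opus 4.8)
The plan is to derive the identity from a single bijection between sets of lattice points. Permuting coordinates we may assume $i=d$, so $\pi_d$ forgets the last coordinate; write $\bm p=(\bm p',p_d)$ for $\bm p\in\mathbb R^d$, with $\bm p'=\pi_d(\bm p)$. The starting point is that $\partial_d^- P$ is the \emph{lower graph} of $P$ over $\pi_d(P)$: since $P$ is convex, for every $\bm w'\in\pi_d(P)$ the fiber $P\cap\pi_d^{-1}(\bm w')$ is a (possibly degenerate) segment in the $\mathbf e_d$-direction, and directly from the definition of $\partial_d^- P$ its unique point lying in $\partial_d^- P$ is the lowest one. Hence $\pi_d$ restricts to a bijection $\partial_d^- P\to\pi_d(P)$, and after scaling by $t\ge 0$ and translating by $\bm v$ it restricts to a bijection $t(\partial_d^- P)+\bm v\to t\pi_d(P)+\pi_d(\bm v)=\pi_d(tP+\bm v)$, whose inverse I denote $\bm w'\mapsto(\bm w',h(\bm w'))$ (suppressing the dependence on $t$ and $\bm v$).

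First I would package the right-hand side as a cardinality. Put
\[
A=\bigl\{(\bm p,s)\in\mathbb Z^d\times[0,1)\ :\ \bm p\in t(\partial_d^- P)+\bm v+s\mathbf e_d\bigr\}.
\]
For a fixed $s$ the number of pairs of $A$ with second coordinate $s$ is exactly $\#\bigl((t(\partial_d^- P)+\bm v+s\mathbf e_d)\cap\mathbb Z^d\bigr)=\TL_{P,\bm v+s\mathbf e_d}^{(-d)}(t)$, so the sum in the statement equals $\#A$; that this is a finite sum follows from Lemma~\ref{7-5}, since the term vanishes unless $\bm v+s\mathbf e_d$ lies on one of the finitely many hyperplanes $H_{\bm a,k}$ with $\bm a\in N(P)$, $(\bm a,\mathbf e_d)>0$ meeting the half-open segment $\{\bm v+s\mathbf e_d:0\le s<1\}$, and each such hyperplane meets that segment in at most one point.

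The heart of the argument is that the map $\Phi\colon A\to\bigl(t\pi_d(P)+\pi_d(\bm v)\bigr)\cap\mathbb Z^{d-1}$, $\Phi(\bm p,s)=\bm p'$, is a bijection. It is well defined because $\bm p-s\mathbf e_d\in t(\partial_d^- P)+\bm v\subseteq tP+\bm v$ gives $\bm p'=\pi_d(\bm p-s\mathbf e_d)\in\pi_d(tP+\bm v)$, while $\bm p'\in\mathbb Z^{d-1}$. For injectivity, if $\Phi(\bm p,s)=\Phi(\bm q,r)=\bm w'$, then $\bm p-s\mathbf e_d$ and $\bm q-r\mathbf e_d$ are two points of $t(\partial_d^- P)+\bm v$ over $\bm w'$, hence equal by the first paragraph, so $p_d-s=q_d-r$; as $p_d-q_d\in\mathbb Z$ and $s-r\in(-1,1)$ this forces $p_d=q_d$ and $s=r$. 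For surjectivity, given a lattice point $\bm w'\in t\pi_d(P)+\pi_d(\bm v)$, let $p_d$ be the unique integer in $[\,h(\bm w'),h(\bm w')+1\,)$ and set $s=p_d-h(\bm w')\in[0,1)$; then $(\bm w',p_d)\in\mathbb Z^d$ and $(\bm w',p_d)-s\mathbf e_d=(\bm w',h(\bm w'))\in t(\partial_d^- P)+\bm v$, so $((\bm w',p_d),s)\in A$ maps onto $\bm w'$. Hence $\#A=\#\bigl((t\pi_d(P)+\pi_d(\bm v))\cap\mathbb Z^{d-1}\bigr)=\TL_{\pi_d(P),\pi_d(\bm v)}(t)$, which is the assertion.

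The step I expect to need the most care is the bookkeeping in the parameter $s$: one must verify that each lattice point counted on the right is produced by \emph{exactly one} $s\in[0,1)$ (the uniqueness inside the injectivity step) and that the ``round the height up'' recipe in the surjectivity step genuinely lands inside $A$ for every lattice point of the projected polytope, including those on its relative boundary. Convexity of $P$, which is exactly what makes $\partial_d^- P$ a single-valued graph over $\pi_d(P)$, is what allows both of these to go through without exceptional cases; everything else is routine.
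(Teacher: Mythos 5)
Your proof is correct and is essentially the same argument as the paper's: both reduce to showing that each lattice point $\bm w'$ of $\pi_d(tP+\bm v)$ is covered by exactly one pair $(\bm p,s)$ with $\bm p\in\bigl(t(\partial_d^-P)+\bm v+s\mathbf e_d\bigr)\cap\mathbb Z^d$, with your surjectivity step (rounding the height $h(\bm w')$ up to the next integer) matching the paper's existence step $r=\lceil\alpha\rceil$ and your injectivity step matching its uniqueness step. Your write-up is slightly more explicit about why the parameter $s$ is also unique (the paper only argues uniqueness of the integer height), but the underlying idea — that $\partial_d^-P$ is a single-valued lower graph over $\pi_d(P)$ — is identical.
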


We note that the RHS in the proposition is a finite sum by Lemma \ref{7-5} since the segment $\{\bm v+s \mathbf e_i \mid 0 \leq s<1\}$ meets only finitely many hyperplanes in $\A_P$.
See Figure \ref{fig:projmove} for a visualization of the proposition.
\begin{figure}
    \centering
\includegraphics[width=6cm,pagebox=cropbox]{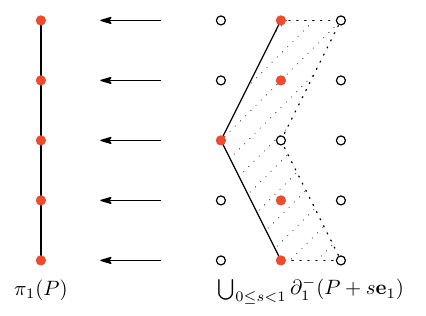}
    \caption{Lattice points in the projection.}
    \label{fig:projmove}
\end{figure}

\begin{proof}
    We may assume $i=d.$ Fix $t \in \mathbb Z_{\geq 0}$ and a lattice point $\intvec=(n_1,\dots,n_{d-1}) \in \pi_d(tP+\bm v)$.
    It suffices to prove that there is a unique integer $r \in \mathbb Z$ such that $(\intvec,r) \in \bigcup_{0 \leq s <1}(t(\partial_d^-P)+\bm v+s \mathbf e_d)$.

    (Existence)\
    By the assumption, there is an $\alpha \in \mathbb R$ such that
    $$(\intvec,\alpha) \in  t (\partial_d^-P)+\bm v.$$
    Then, $r=\lceil \alpha \rceil$ satisfies the desired condition since $(\intvec,\lceil \alpha \rceil)$ is contained in $t(\partial_d^-P)+\bm v+ (\lceil \alpha \rceil -\alpha)\mathbf e_d$.

    (Uniqueness)\
    The uniqueness of $r$ follows from the fact that,
    for any $(\intvec,\alpha),(\intvec,\alpha')$ which are contained in $\bigcup_{0 \leq s <1} (t(\partial_n^-P)+\bm v+s \mathbf e_d)$, we have $|\alpha-\alpha'|<1$.
\end{proof}

We will also use the following variation of Proposition \ref{7.6}.
For a $d$-polytope $P$,
let
\[
\partial_i^+P=\{ \bm x \in P \mid \bm x \not \in P - \varepsilon \mathbf e_i \mbox{ for all } \varepsilon>0\}
\]
and
\[
\TL_{P,\bm v}^{(+i)}(t)=\# \big( \big( t (\partial_i^+ P)+ \bm v \big) \cap \mathbb Z^d \big)
\ \ \mbox{ for }t \in \mathbb Z_{\geq 0}.
\]
The next statement can be proved by the same argument given in the proof of Proposition \ref{7.6}.

\begin{proposition}
\label{7.6*}
    Let $P \subset \mathbb R^d$ be a rational $d$-polytope. For any $\bm v \in \mathbb R^d$ and $t \in \mathbb Z_{\geq 0}$, one has
    $$\TL_{\pi_i(P),\pi_i(\bm v)}(t)= \sum_{0 \leq s <1,\ \TL_{P,\bm v - s \mathbf e_i}^{(+i)}(t) \ne 0} \TL_{P,\bm v- s \mathbf e_i}^{(+i)}(t).$$
\end{proposition}

We now prove Theorem \ref{thm:6-1}.

\begin{proof}[Proof of Theorem \ref{thm:6-1}]
We use induction on $d$.
Suppose $d=1$ and $\Gamma_P=\Gamma_Q$.
Let $\ell=\#(P \cap \mathbb Z)$ and let $p$ be the maximal integer which is equal to or smaller than $\min P$.
Then, by setting
\[
a=\min \{s \in [0,1)\mid \#((P+s)\cap \mathbb Z)-\#((P+s+\varepsilon)\cap \mathbb Z) \ne 0\}
\]
and
\[
b=\min \{s \in [0,1) \mid \#((P-s)\cap \mathbb Z)-\#(P-s-\varepsilon)\cap \mathbb Z \ne 0\},
\]
where $\varepsilon>0$ is sufficiently small.  
we have 
\[P=[p+(1-a),p+\ell+b]\]
Since $\ell,a,b$ only depend on $\Gamma_P$,
this implies $P=Q+n$ for some $n \in \mathbb Z$ .


Assume $d>1$ and $\Gamma_P=\Gamma_Q$.
By Lemma \ref{7.3}, we already know $Q=P+\bm v$ for some $\bm v \in \mathbb R^d$.
By Proposition \ref{7.6} and the assumption $\Gamma_P=\Gamma_Q$, we have $\Gamma_{\pi_1(P)}=\Gamma_{\pi_1(Q)}$ and $\Gamma_{\pi_2(P)}=\Gamma_{\pi_2(Q)}$.
Since $Q=P+\bm v$, the induction hypothesis says that $\pi_1(\bm v),\pi_2(\bm v) \in \mathbb Z^{d-1}$ which guarantees $\bm v \in \mathbb Z^d$.
\end{proof}

\section{Group symmetry}

In the previous section, we saw that the translated lattice points enumerators determine polytopes up to translations by integer vectors.
In this section,
we study translated lattice points enumerators of polytopes having some symmetries,
in particular, we prove Theorem \ref{thm:1-6} in the Introduction.

Let $\mathrm{GL}_d(\mathbb Z)$ be the subgroup of the general linear group $\mathrm{GL}_d(\mathbb R)$ consisting of all elements $g \in \mathrm{GL}_d(\mathbb R)$ with $g(\mathbb Z^d)=\mathbb Z^d$.
If we identify each element of $\mathrm{GL}_d(\mathbb R)$ with $d \times d$ non-singular matrix in a standard way, then $\mathrm{GL}_d(\mathbb Z)$ may be considered as the set of unimodular matrices.
For a rational $d$-polytope $P \subset \mathbb R^d$, we define
$$\mathrm{Aut}_{\mathbb Z}(P)=\{g \in \mathrm{GL}_d (\mathbb Z) \mid g(P)=P+\intvec \mbox{ for some }\intvec \in \mathbb Z^d\}$$
and
$$\mathrm{Aut}_{\mathbb Z}(\Gamma_P)=\{g \in \mathrm{GL}_d (\mathbb Z) \mid \TL_{P,g(\bm v)}=\TL_{P,\bm v} \mbox{ for all }\bm v \in \mathbb R^d\}.$$

\begin{proposition}
\label{autom}
For a rational $d$-polytope $P \subset \mathbb R^d$,
one has $\mathrm{Aut}_{\mathbb Z}(\Gamma_P)=\mathrm{Aut}_{\mathbb Z}(P)$.
\end{proposition}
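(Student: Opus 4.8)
The plan is to prove the two inclusions $\mathrm{Aut}_{\mathbb Z}(P) \subseteq \mathrm{Aut}_{\mathbb Z}(\Gamma_P)$ and $\mathrm{Aut}_{\mathbb Z}(\Gamma_P) \subseteq \mathrm{Aut}_{\mathbb Z}(P)$ separately, using Theorem \ref{thm:6-1} for the harder direction. For the first inclusion, suppose $g \in \mathrm{Aut}_{\mathbb Z}(P)$, so $g(P) = P + \bm u$ for some $\bm u \in \mathbb Z^d$. Then for any $\bm v \in \mathbb R^d$ and $t \in \mathbb Z_{\geq 0}$, the set $tP + g(\bm v)$ is mapped by $g$ onto $g(tP) + g^{-1}\text{-stuff}$ — more carefully, $g\big(tP + g^{-1}(\bm w)\big) = t\,g(P) + \bm w = tP + \bm u + \bm w$, and since $g$ preserves $\mathbb Z^d$ and integer translation preserves lattice point counts, we get $\TL_{P,\bm w}(t) = \#\big((tP + \bm w)\cap \mathbb Z^d\big) = \#\big(g(tP + g^{-1}(\bm w))\cap \mathbb Z^d\big)$; choosing $\bm w$ appropriately this yields $\TL_{P,g(\bm v)} = \TL_{P,\bm v}$ for all $\bm v$, so $g \in \mathrm{Aut}_{\mathbb Z}(\Gamma_P)$. (This direction is routine; the only care needed is tracking which variable plays which role.)

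For the reverse inclusion, suppose $g \in \mathrm{Aut}_{\mathbb Z}(\Gamma_P)$, i.e.\ $\TL_{P,g(\bm v)}(t) = \TL_{P,\bm v}(t)$ for all $\bm v \in \mathbb R^d$ and $t \in \mathbb Z_{\geq 0}$. I want to deduce $g(P) = P + \bm u$ for some $\bm u \in \mathbb Z^d$, which by Theorem \ref{thm:6-1} will follow once I show $\TL_{g(P),\bm v}(t) = \TL_{P,\bm v}(t)$ for all $\bm v, t$. The key computation is a change of variables: since $g \in \mathrm{GL}_d(\mathbb Z)$ preserves $\mathbb Z^d$,
\[
\TL_{g(P),\bm v}(t) = \#\big((t\,g(P) + \bm v)\cap \mathbb Z^d\big) = \#\big(g(tP + g^{-1}(\bm v))\cap \mathbb Z^d\big) = \#\big((tP + g^{-1}(\bm v))\cap \mathbb Z^d\big) = \TL_{P,g^{-1}(\bm v)}(t).
\]
Now applying the hypothesis on $g$ with the vector $g^{-1}(\bm v)$ in place of $\bm v$ gives $\TL_{P,g^{-1}(\bm v)}(t) = \TL_{P,g(g^{-1}(\bm v))}(t)$ — wait, that is the wrong direction; instead I should note the hypothesis says $\TL_{P,g(\bm w)} = \TL_{P,\bm w}$ for \emph{all} $\bm w$, so with $\bm w = g^{-1}(\bm v)$ we get $\TL_{P,\bm v} = \TL_{P,g^{-1}(\bm v)}$, hence $\TL_{g(P),\bm v}(t) = \TL_{P,\bm v}(t)$ for all $\bm v \in \mathbb R^d$ and $t \in \mathbb Z_{\geq 0}$. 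Then $\Gamma_{g(P)} = \Gamma_P$, so Theorem \ref{thm:6-1} yields $g(P) = P + \bm u$ for some $\bm u \in \mathbb Z^d$, i.e.\ $g \in \mathrm{Aut}_{\mathbb Z}(P)$.

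The main (and essentially only) obstacle is organizational: making sure the substitution of variables in $\TL$ is done correctly, in particular that $g \in \mathrm{GL}_d(\mathbb Z)$ both maps $\mathbb Z^d$ bijectively to itself (so cardinalities of lattice-point sets are preserved under $g$) and commutes with dilation in the sense $g(tX) = t\,g(X)$ for linear $g$. Once that bookkeeping is in place, the proof is a short chain of equalities plus one invocation of Theorem \ref{thm:6-1}. No genuinely new idea is needed beyond the determination theorem already proved; the proposition is really a corollary of Theorem \ref{thm:6-1} made to look like a statement about group actions.
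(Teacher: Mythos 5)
Your proposal is correct and follows essentially the same route as the paper: the easy inclusion $\mathrm{Aut}_{\mathbb Z}(P)\subseteq\mathrm{Aut}_{\mathbb Z}(\Gamma_P)$ by a direct change-of-variables count, and the reverse inclusion by showing $\Gamma_{g(P)}=\Gamma_P$ (the paper uses $\Gamma_{g^{-1}(P)}=\Gamma_P$, which is the same up to replacing $g$ by $g^{-1}$) and invoking Theorem \ref{thm:6-1}. The only slip is writing $t\,g(P)+\bm w = tP+\bm u+\bm w$ instead of $tP+t\bm u+\bm w$, which is harmless since $t\bm u\in\mathbb Z^d$ in any case.
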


\begin{proof}
We first prove ``$\subset$".
Let $g \in \mathrm{Aut}_{\mathbb Z}(\Gamma_P)$.
Then, for any $\bm v \in \mathbb R^d$, we have
$$\TL_{P,\bm v}(t)=\TL_{P,g(\bm v)}(t)=\# \big( \big(tP + g(\bm v)\big) \cap \mathbb Z^d \big) = \# \big( (tg^{-1}(P)+\bm v) \cap \mathbb Z^d \big)=\TL_{g^{-1}(P),\bm v}(t)$$
for all $t \in \mathbb Z_{\geq 0}$.
Thus we have $\Gamma_P=\Gamma_{g^{-1}(P)}$ so $P=g^{-1}(P)+ \intvec $ for some $\intvec \in \mathbb Z^d$ by Theorem \ref{thm:6-1}.
Then $g \in \mathrm{Aut}_{\mathbb Z}(P)$ since
$P=g(P)-g(\intvec)$ and $g(\intvec) \in \mathbb Z^d$.

We next prove ``$\supset$".
Let $g \in \mathrm{Aut}_{\mathbb Z}(P)$.
Then for any $\bm v \in \mathbb R^d$, we have
$$\# \big( \big( tP+g(\bm v) \big)\cap \mathbb Z^d \big) =\#\big( \big(t g(P) + g(\bm v)\big) \cap  \mathbb Z^d \big) =\#\big( (tP+\bm v)\cap \mathbb Z^d\big)$$
for any $t \in \mathbb Z_{\geq 0}$,
where the last equality follows from the fact that $g \in \mathrm{GL}_d(\mathbb Z)$.
This implies $\TL_{P,g(\bm v)}=\TL_{P,\bm v}$ for all $\bm v \in \mathbb R^d$.
\end{proof}

\begin{example}
Consider the rhombus $Q$ in Example \ref{ex:4.2}.
From the list of translated lattice points enumerators in the example,
one can see that they are equal on $E_1,E_2,E_7$ and $E_8$.
This can be explained using the symmetry.
Let $\rho_1,\rho_2 \in \mathrm{GL}_2(\mathbb Z)$ be a reflection by the $x$-axis and the $y$-axis, respectively.
Then $\rho_1,\rho_2$ do not change $Q$ so they are elements of $\mathrm{Aut}_{\mathbb Z}(Q)$.
We have
\[
\rho_1(E_1)=E_7,\
\rho_1(E_2)=E_8,
\mbox{ and }
\rho_2(E_1)=E_2,
\]
which say that translated lattice points enumerators are equal on $E_1,E_2,E_7$ and $E_8$.
\end{example}

We now focus on centrally symmetric polytopes.
Recall that a quasi-polynomial $f$ is said to be symmetric if its $k$th constituent equals its $(-k)$th constituent for all $k \in \mathbb Z$.

We first prove the following criterion for the symmetry of Ehrhart quasi-polynomials of $P+\bm v$.

\begin{lemma}
\label{symcriteiron}
Let $P \subset \mathbb R^d$ be a rational $d$-polytope. The following conditions are equivalent.
\begin{itemize}
    \item[(i)] $\ehr_{P+\bm v}$ is symmetric for all $\bm v \in \mathbb Q^d$.
    \item[(ii)] For all $\bm v \in \mathbb Q^d$ and $k \in \mathbb Z_{\geq 0}$, one has
    $$\mbox{the $k$th constituent of $\TL_{P,\bm v}$}=\mbox{the $(-k)$th constituent of $\TL_{P,-\bm v}$}.$$
    \end{itemize}
\end{lemma}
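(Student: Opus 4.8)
The plan is to translate both conditions into statements about constituents of translated lattice points enumerators via Lemma~\ref{ehrToTrans}, which says that for every $\bm v\in\mathbb Q^d$ and every $k\in\mathbb Z$ the $k$th constituent of $\ehr_{P+\bm v}$ equals the $k$th constituent of $\TL_{P,k\bm v}$. I would first record the trivial observation that a quasi-polynomial $f$ is symmetric if and only if its $k$th and $(-k)$th constituents agree for all $k\in\mathbb Z_{\geq 0}$, so that throughout we only need to compare constituents with nonnegative indices.

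The implication (ii)$\Rightarrow$(i) is then immediate: given $\bm v\in\mathbb Q^d$ and $k\geq 0$, Lemma~\ref{ehrToTrans} identifies the $k$th constituent of $\ehr_{P+\bm v}$ with the $k$th constituent of $\TL_{P,k\bm v}$ and the $(-k)$th constituent of $\ehr_{P+\bm v}$ with the $(-k)$th constituent of $\TL_{P,-(k\bm v)}$, and applying (ii) with $\bm w=k\bm v$ shows these coincide, so $\ehr_{P+\bm v}$ is symmetric.

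For (i)$\Rightarrow$(ii), fix $\bm w\in\mathbb Q^d$ and $k\geq 0$. The natural idea is to realise the $k$th constituent of $\TL_{P,\bm w}$ as a constituent of some $\ehr_{P+\bm v}$ by solving $m\bm v=\bm w$ for a suitable index $m$; when $k\geq 1$ one may simply take $m=k$ and $\bm v=\bm w/k$. The point requiring care --- and the only real subtlety in the proof --- is the case $k=0$, for which no such $\bm v$ exists. I would resolve this uniformly: let $q$ be the denominator of $P$, so that by Theorem~\ref{thm:McMullen} every $\TL_{P,\bm u}$ has period $q$, put $m=k+q\geq 1$ and $\bm v=\bm w/m\in\mathbb Q^d$. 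Then $m\bm v=\bm w$, so by Lemma~\ref{ehrToTrans} the $m$th constituent of $\ehr_{P+\bm v}$ is the $m$th constituent of $\TL_{P,\bm w}$, which equals its $k$th constituent since $m\equiv k\pmod q$; likewise the $(-m)$th constituent of $\ehr_{P+\bm v}$ equals the $(-k)$th constituent of $\TL_{P,-\bm w}$. By (i) the quasi-polynomial $\ehr_{P+\bm v}$ is symmetric, so its $m$th and $(-m)$th constituents agree, which is exactly the assertion of (ii). The legitimacy of replacing the index $k$ by $k+q$ rests on the period-independence of the constituent indexing already noted in the paper.
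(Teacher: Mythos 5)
Your proof is correct and follows essentially the same route as the paper's: both directions are obtained by using Lemma~\ref{ehrToTrans} to shuttle between constituents of $\ehr_{P+\bm v}$ and of $\TL_{P,k\bm v}$. The only difference is that you explicitly repair the $k=0$ case of (i)$\Rightarrow$(ii) by passing to the index $m=k+q$ (the paper's own proof writes $\ehr_{P+\frac 1k\bm v}$, which is undefined for $k=0$), and your fix is valid since $q$ is a period of $\TL_{P,\pm\bm w}$ and constituent indexing is period-independent.
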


\begin{proof}
We first prove ``(i) $\Rightarrow$ (ii)".
Fix $\bm v \in \mathbb Q^d$ and $k \in \mathbb Z_{\geq 0}$.
Then \begin{align*}&\mbox{$k$th constituent of $\TL_{P,\bm v}$}\\
&=\mbox{$k$th constituent of $\ehr_{P+\frac 1 k \bm v}$} &\mbox{(by Lemma \ref{ehrToTrans})}\\
&=\mbox{$(-k)$th constituent of $\ehr_{P+\frac 1 k \bm v}$}  &\mbox{(by (i))}\\
&=\mbox{$(-k)$th constituent of $\TL_{P, -\bm v}$}  &\mbox{(by Lemma \ref{ehrToTrans})},
\end{align*}
as desired.

The proof for ``(ii) $\Rightarrow$ (i)" is similar.
Indeed, we have
\begin{align*}&\mbox{$k$th constituent of $\ehr_{P+\bm v}$}\\
&=\mbox{$k$th constituent of $\TL_{P,k\bm v}$} &\mbox{(by Lemma \ref{ehrToTrans})}\\
&=\mbox{$(-k)$th constituent of $\TL_{P,-k\bm v}$}  &\mbox{(by (ii))}\\
&=\mbox{$(-k)$th constituent of $\ehr_{P+\bm v}$} &\mbox{(by Lemma \ref{ehrToTrans})},
\end{align*}
as desired.
\end{proof}

Recall that a polytope $P \subset \mathbb R^d$ is said to be centrally symmetric if $-P=P+\bm x$ for some $\bm x \in \mathbb R^d$.

\begin{corollary}
\label{projsym}
Let $P \subset \mathbb R^d$ be a rational $d$-polytope.
If $\ehr_{P+ \bm v}$ is symmetric for all $\bm v \in \mathbb Q^{d}$, then
\begin{itemize}
    \item[(i)] $P$ is centrally symmetric.
    \item[(ii)] $\ehr_{\pi_i(P)+\bm u}$ is symmetric for all $\bm u \in \mathbb Q^{d-1}$ and $i \in \{1,2,\dots,d\}$.
\end{itemize}
\end{corollary}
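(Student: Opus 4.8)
The plan is to prove the two assertions separately, and in both cases the starting point is the reformulation of the hypothesis furnished by Lemma~\ref{symcriteiron}: the assumption that $\ehr_{P+\bm v}$ is symmetric for all $\bm v\in\mathbb Q^d$ says exactly that the $k$th constituent of $\TL_{P,\bm v}$ equals the $(-k)$th constituent of $\TL_{P,-\bm v}$ for every $\bm v\in\mathbb Q^d$ and every $k$. Since $\TL_{P,\bm v}$ depends only on the cell of $\Delta_P/\mathbb Z^d$ containing $[\bm v]$ (Theorem~\ref{thm:1-2}) and every such cell contains a rational point, I would first record the equivalent statement: for every cell $C\in\Delta_P/\mathbb Z^d$ and every $k$, the $k$th constituent of $\TL_{P,C}$ equals the $(-k)$th constituent of $\TL_{P,-C}$.

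For (i), by Minkowski's theorem (Theorem~\ref{thm:Minkowski}) it suffices to prove $\mathcal M(P)=\mathcal M(-P)$, i.e.\ that for each primitive $\bm a\in N(P)$ one also has $-\bm a\in N(P)$ and $\mathrm{vol}(F_{\bm a})=\mathrm{vol}(F_{-\bm a})$, where $F_{\bm b}$ denotes the facet of $P$ with inner normal $\bm b$. Fix such an $\bm a$ and a hyperplane $H_{\bm a,m}\in\A_P$, and take $\bm v\in H_{\bm a,m}$ generic in $\A_P$, so that $[\bm v]$ lies in a codimension-one cell with exactly two adjacent full-dimensional cells, say $C_-$ and $C_+$ on the $-\bm a$ and $+\bm a$ sides, and $\TL_{P,C_\pm}=\TL_{P,\bm v\pm\varepsilon\bm a}$ for small $\varepsilon>0$. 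Using Lemma~\ref{lem:codim1} in the two directions $\pm\bm a$ one gets $\TL_{P,C_-}-\TL_{P,C_+}=\TL_{F_{\bm a},\bm v}-\TL_{F_{-\bm a},\bm v}$ (with the convention $\TL_{F_{-\bm a},\bm v}=0$ when $-\bm a\notin N(P)$), and, as in the proof of Lemma~\ref{7.3}, each $\TL_{F_{\bm b},\bm v}$ is a lattice point enumerator in the hyperplane of $F_{\bm b}$, hence of degree $d-1$ with every constituent having leading coefficient $\mathrm{vol}(F_{\bm b})$; thus every constituent of $\TL_{P,C_-}-\TL_{P,C_+}$ has coefficient of $t^{d-1}$ equal to $\delta_{\bm a}:=\mathrm{vol}(F_{\bm a})-\mathrm{vol}(F_{-\bm a})$. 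The same computation at the antipodal hyperplane $H_{\bm a,-m}$, whose generic point $[-\bm v]$ has $-C_+$ as its $-\bm a$-side cell and $-C_-$ as its $+\bm a$-side cell, shows that every constituent of $\TL_{P,-C_+}-\TL_{P,-C_-}$ has coefficient of $t^{d-1}$ equal to $\delta_{\bm a}$, i.e.\ every constituent of $\TL_{P,-C_-}-\TL_{P,-C_+}$ has coefficient of $t^{d-1}$ equal to $-\delta_{\bm a}$. On the other hand, the reformulated hypothesis gives that the $k$th constituent of $\TL_{P,C_\pm}$ equals the $(-k)$th constituent of $\TL_{P,-C_\pm}$, so the $k$th constituent of $\TL_{P,C_-}-\TL_{P,C_+}$ equals the $(-k)$th constituent of $\TL_{P,-C_-}-\TL_{P,-C_+}$; comparing coefficients of $t^{d-1}$ forces $\delta_{\bm a}=-\delta_{\bm a}$, hence $\delta_{\bm a}=0$. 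Since $\mathrm{vol}(F_{\bm a})>0$, this is possible only if $-\bm a\in N(P)$, and then it gives $\mathrm{vol}(F_{\bm a})=\mathrm{vol}(F_{-\bm a})$. Therefore $\mathcal M(P)=\mathcal M(-P)$ and $P$ is centrally symmetric.

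For (ii), fix $i$ and $\bm u\in\mathbb Q^{d-1}$, and choose a rational lift $\bm v\in\mathbb Q^d$ with $\pi_i(\bm v)=\bm u$. By Lemma~\ref{symcriteiron} applied to $\pi_i(P)$ it is enough to show that the $k$th constituent of $\TL_{\pi_i(P),\bm u}$ equals the $(-k)$th constituent of $\TL_{\pi_i(P),-\bm u}$. The key lemma I would prove is: for every $\bm w\in\mathbb Q^d$ and every $k$, the $k$th constituent of $\TL_{P,\bm w}^{(-i)}$ equals the $(-k)$th constituent of $\TL_{P,-\bm w}^{(+i)}$. To see this, pick a small rational $\varepsilon>0$ with $\TL_{P,\bm w}^{(-i)}=\TL_{P,\bm w}-\TL_{P,\bm w+\varepsilon\mathbf e_i}$ (Lemma~\ref{enumProjection}) and $\TL_{P,-\bm w}^{(+i)}=\TL_{P,-\bm w}-\TL_{P,-\bm w-\varepsilon\mathbf e_i}$ (the analogue of Lemma~\ref{enumProjection} for $\partial_i^+P$ used in Proposition~\ref{7.6*}), take the relevant constituents, and apply the reformulated hypothesis to the rational points $\bm w$ and $\bm w+\varepsilon\mathbf e_i$; the two sides then coincide. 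Granting the key lemma, Proposition~\ref{7.6} writes $\TL_{\pi_i(P),\bm u}$ as the finite sum of the $\TL_{P,\bm v+s\mathbf e_i}^{(-i)}$ over the (rational, by Lemma~\ref{7-5}) values $s\in[0,1)$ for which this function is not identically zero, while Proposition~\ref{7.6*} applied at $-\bm v$ writes $\TL_{\pi_i(P),-\bm u}=\TL_{\pi_i(P),\pi_i(-\bm v)}$ as the analogous sum of the $\TL_{P,-\bm v-s\mathbf e_i}^{(+i)}$. Taking the $k$th constituent of the first sum and the $(-k)$th of the second and applying the key lemma with $\bm w=\bm v+s\mathbf e_i$ matches the two sums term by term (the two index sets of $s$ agree because, by the key lemma, the $s$th term of one sum is identically zero exactly when that of the other is). This gives the required constituent identity, hence $\ehr_{\pi_i(P)+\bm u}$ is symmetric.

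The step I expect to be the main obstacle is the sign bookkeeping in (i): for a codimension-one cell and its antipode one has to pin down which full-dimensional cells sit on the $+\bm a$ and $-\bm a$ sides, and then track carefully the signs produced by Lemma~\ref{lem:codim1} applied in the directions $\bm a$ and $-\bm a$, splitting into the cases $-\bm a\in N(P)$ and $-\bm a\notin N(P)$. Once those conventions are fixed, the rest of (i), the key lemma, and the term-by-term matching in (ii) are essentially formal.
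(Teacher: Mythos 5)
Your proof of (ii) is essentially the paper's: the paper chains Proposition \ref{7.6}, the symmetry hypothesis in the form of Lemma \ref{symcriteiron} applied to the pairs $(\bm w,-\bm w)$ and $(\bm w+\varepsilon\mathbf e_i,-\bm w-\varepsilon\mathbf e_i)$, and Proposition \ref{7.6*}; your ``key lemma'' is exactly the middle equality of that chain, written out in detail.

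For (i) you take a genuinely different route. The paper dilates: the $k$th constituent of $\ehr_{qP+q\bm v}$ is the $qk$th constituent of $\ehr_{P+\bm v}$ with $t$ replaced by $t/q$, so the hypothesis passes to the lattice polytope $qP$, and Theorem \ref{thm:deVriesYoshinaga} is then invoked as a black box. You instead re-derive central symmetry directly via Minkowski's theorem, reading off $\mathrm{vol}(F_{\bm a})-\mathrm{vol}(F_{-\bm a})$ from the jump of $\TL_{P,\cdot}$ across a wall of $\A_P$ and its antipode using Lemma \ref{lem:codim1}. This is longer but self-contained within the toolkit of Section 6, effectively reproving the relevant direction of de Vries--Yoshinaga for rational polytopes; it is a legitimate alternative.

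One claim in your (i) needs repair: it is not true that \emph{every} constituent of $\TL_{F_{\bm b},\bm v}$ has $t^{d-1}$-coefficient $\mathrm{vol}(F_{\bm b})$. If the facet $F_{\bm b}$ lies in $H_{\bm a_j,b_j}$ with $g=\gcd(\bm a_j)>1$ (possible even though $(\bm a_j,b_j)$ is primitive, e.g.\ a facet on the line $2x_1=1$), then $tF_{\bm b}+\bm v$ lies in $H_{\bm a_j,\,tb_j+(\bm a_j,\bm v)}$, which contains no lattice points for those residues of $t$ with $g\nmid tb_j+(\bm a_j,\bm v)$; the corresponding constituents of $\TL_{F_{\bm b},\bm v}$ are identically zero. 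Hence some constituents of $\TL_{P,C_-}-\TL_{P,C_+}$ may have $t^{d-1}$-coefficient $0$, $\mathrm{vol}(F_{\bm a})$ or $-\mathrm{vol}(F_{-\bm a})$ rather than $\delta_{\bm a}$. The fix is easy: take $m=0$ (a generic $\bm v\in H_{\bm a,0}$) and compare only the $0$th constituents, for which both facet hyperplanes do meet the lattice at $\bm v$ and at $-\bm v$, so both $0$th constituents have leading coefficient equal to the relative volume; alternatively, observe that the vanishing pattern is itself antipodally symmetric (the $k$th constituent of $\TL_{F_{\bm b},\bm v}$ vanishes iff the $(-k)$th constituent of $\TL_{F_{\bm b},-\bm v}$ does, since $g\mid kb_j+(\bm a_j,\bm v)$ iff $g\mid (-k)b_j+(\bm a_j,-\bm v)$), so the comparison still forces $\mathrm{vol}(F_{\bm a})=\mathrm{vol}(F_{-\bm a})$ and in particular $-\bm a\in N(P)$. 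With that correction your argument for (i) goes through.
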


\begin{proof}
(i)
Let $q$ be a positive integer such that $qP$ is integral.
It suffices to prove that $qP$ is centrally symmetric.
Since $\ehr_{qP+q\bm v}(\ell )=\ehr_{P+\bm v}(q\ell )$ for all $\ell \in \mathbb Z_{\geq 0}$,
the $k$th constituent of $\ehr_{qP+q\bm v}$ is obtained from the $qk$th constituent of $\ehr_{P+\bm v}$ by substituting $t$ with $\frac t q$ (as polynomials in 
$t$).
This fact and the assumption say that $\ehr_{qP+q\bm v}$ is symmetric for all $\bm v \in \mathbb Q^d$.
Since $qP$ is a lattice polytope,
it follows from Theorem \ref{thm:deVriesYoshinaga} that $qP$ is centrally symmetric.

(ii)
For any $\bm v \in \mathbb Q^d$, we have
\begin{align*}
    &\mbox{$k$th constituent of $\TL_{\pi_i(P),\pi_i(\bm v)}$}\\
    &=\mbox{$k$th constituent of $\sum_{0 \leq s <1} \TL_{P,\bm v+s\mathbf e_i}^{(-i)}$} & (\mbox{by Proposition \ref{7.6}})\\
    &=\mbox{$k$th constituent of $\sum_{0 \leq s <1} \big(\TL_{P,\bm v+s\mathbf e_i}-\TL_{P,\bm v+s\mathbf e_i-\varepsilon_s \mathbf e_i} \big)$}\\
    &=\mbox{$(-k)$th constituent of $\sum_{0 \leq s <1} \big(\TL_{P,-\bm v-s\mathbf e_i}-\TL_{P,-\bm v -s\mathbf e_i +\varepsilon_s \mathbf e_i} \big)$} & (\mbox{by Lemma \ref{symcriteiron}})\\
    &=\mbox{$(-k)$th constituent of $\sum_{0 \leq s <1} \TL_{P,-\bm v-s\mathbf e_i}^{(+i)}$} \\
    &=\mbox{$(-k)$th constituent of $\TL_{\pi_i(P),-\pi_i(\bm v)}$} & (\mbox{by Proposition \ref{7.6*}}),
\end{align*}
where each $\varepsilon_s$ is a sufficiently small positive number which depends on $s$. 
This proves that $\pi_i(P)$ satisfies the condition (ii) of Lemma \ref{symcriteiron}.
\end{proof}

We now come to the goal of this section.
Let $P$ be a centrally symmetric polytope with $-P=P+\bm x$.
Then $\frac 1 2 \bm x$ is a center of $P$, and if $\bm p =\frac 1 2 \bm x +\bm p'$ is a vertex of $P$,
the point $\frac 1 2 \bm x -\bm p'$ is also a vertex of $P$ by the central symmetry.
We write this vertex $\frac 1 2 \bm x -\bm p'$ as $\bm p^*$.

\begin{theorem}
\label{thm:7.5}
Let $P \subset \mathbb R^d$ be a rational $d$-polytope.
The following conditions are equivalent.
\begin{itemize}
    \item[(i)] $\ehr_{P+\bm v}$ is symmetric for all $\bm v \in \mathbb Q^d$.
    \item[(ii)] $P$ is centrally symmetric and $\bm p -\bm p^* \in \mathbb Z^d$ for every vertex $\bm p $ of $P$.
    \end{itemize}
\end{theorem}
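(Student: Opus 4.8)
The plan is to prove both implications, using Theorem~\ref{thm:1-6} (the case where $P=-P$) as the engine, and reducing the general centrally symmetric case to that case by a suitable integral translation. First I observe that if $\bm v-\bm v^*\in\mathbb Z^d$ for every vertex $\bm v$ of a centrally symmetric polytope $P$ with center $\tfrac12\bm u$, then writing $\bm v=\tfrac12\bm u+\bm v'$ gives $2\bm v'\in\mathbb Z^d$, so $2(P-\tfrac12\bm u)=2P-\bm u$ has all its vertices $\pm 2\bm v'$ in $\mathbb Z^d$; that is, $2(P-\tfrac12\bm u)$ is integral. Conversely, if $P$ is centrally symmetric and $2(P-\tfrac12\bm u)$ is integral, then each vertex $\bm v'$ of $P-\tfrac12\bm u$ satisfies $2\bm v'\in\mathbb Z^d$, whence $\bm v-\bm v^*=2\bm v'\in\mathbb Z^d$. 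So condition (ii) of Theorem~\ref{thm:7.5} is equivalent to: $P$ is centrally symmetric and, setting $P_0:=P-\tfrac12\bm u$ (so $P_0=-P_0$), the polytope $2P_0$ is integral.

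With this reformulation in hand, the key point is that the property ``$\ehr_{P+\bm v}$ is symmetric for all $\bm v\in\mathbb Q^d$'' is invariant under translating $P$ by a rational vector. Indeed, if $P'=P+\bm w$ with $\bm w\in\mathbb Q^d$, then $\ehr_{P'+\bm v}=\ehr_{P+(\bm v+\bm w)}$, and as $\bm v$ ranges over $\mathbb Q^d$ so does $\bm v+\bm w$; hence $\ehr_{P'+\bm v}$ is symmetric for all $\bm v\in\mathbb Q^d$ if and only if $\ehr_{P+\bm v}$ is. Applying this with $\bm w=-\tfrac12\bm u$, condition (i) of Theorem~\ref{thm:7.5} holds for $P$ if and only if it holds for $P_0=-P_0$. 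Now for the forward direction: if (i) holds for $P$, then by Corollary~\ref{projsym}(i), $P$ is centrally symmetric; then (i) holds for $P_0$, and since $P_0=-P_0$, Theorem~\ref{thm:1-6} gives that $2P_0$ is integral — which is exactly the reformulated (ii). For the converse: if $P$ is centrally symmetric with $2P_0$ integral, then $P_0=-P_0$ with $2P_0$ integral, so Theorem~\ref{thm:1-6} yields that $\ehr_{P_0+\bm v}$ is symmetric for all $\bm v\in\mathbb Q^d$, and by translation-invariance the same holds for $P$, giving (i).

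The main obstacle is purely bookkeeping: one must be careful that the center $\tfrac12\bm u$ is a rational vector so that translation by $-\tfrac12\bm u$ stays within the scope of the hypothesis ``for all $\bm v\in\mathbb Q^d$''. This is automatic: $P$ rational forces its vertices, hence $\bm u$ (the sum of any vertex $\bm v$ and its antipode $\bm v^*$), to be rational. One should also double-check the elementary equivalence in the first paragraph — that all vertices of $P_0$ lying in $\tfrac12\mathbb Z^d$ is the same as $2P_0$ being integral — which is immediate from the description of vertices of a dilation. No new geometric input beyond Theorem~\ref{thm:1-6} and Corollary~\ref{projsym} is needed; the proof is a short reduction.
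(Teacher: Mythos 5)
Your reduction steps are all correct: condition (ii) is indeed equivalent to ``$P$ is centrally symmetric and $2(P-\tfrac12\bm u)$ is integral,'' the center $\tfrac12\bm u$ is rational, and the property (i) is invariant under rational translations of $P$. The paper itself performs exactly this normalization (``by taking an appropriate translation, we may assume $P=-P$'') in the (ii)~$\Rightarrow$~(i) direction. The problem is that your argument is circular. In this paper, Theorem~\ref{thm:1-6} is not an independent prior result: it is \emph{stated} in the introduction and \emph{proved} only as a reformulation of Theorem~\ref{thm:7.5} (see the remark immediately after the proof: ``So, Theorem~\ref{thm:7.5} is essentially equivalent to Theorem~\ref{thm:1-6}''). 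What you have written establishes the equivalence ``Theorem~\ref{thm:7.5} $\Leftrightarrow$ Theorem~\ref{thm:1-6},'' which the authors also note, but it proves neither; you have invoked as an engine precisely the special case $P=-P$ of the statement you are trying to prove.

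To close the gap you must actually prove the centered case. The (ii)~$\Rightarrow$~(i) direction is short: once $P=-P$ and $2P$ is integral, every $\TL_{P,\bm v}$ has period $2$, and $\TL_{P,\bm v}=\TL_{P,-\bm v}$ by Lemma~\ref{lem:cs}; hence the $k$th constituent of $\TL_{P,\bm v}$ equals the $k$th, and therefore the $(-k)$th, constituent of $\TL_{P,-\bm v}$, which is condition (ii) of Lemma~\ref{symcriteiron}. The (i)~$\Rightarrow$~(ii) direction is the substantive part and does not follow from any translation bookkeeping: the paper proves it by induction on $d$, treating $d=1$ by an explicit computation of two constituents of $\ehr_P$, and for $d>1$ choosing, for each vertex $\bm v$, a $\mathbb Z$-basis of $\mathbb Z^d$ inside the interior of the normal cone at $\bm v$ (Lemma~\ref{zbasiscor}), applying the corresponding unimodular transformation so that each coordinate projection $\pi_j$ sends $\bm v$ to a vertex of the projected polytope, and then using Corollary~\ref{projsym}(ii) (symmetry is inherited by projections, via Propositions~\ref{7.6} and~\ref{7.6*}) together with the induction hypothesis to conclude $\bm v-\bm v^*\in\mathbb Z^d$. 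None of this geometric input appears in your proposal, and it cannot be replaced by citing Theorem~\ref{thm:1-6}.
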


To prove the theorem,
we recall the following  basic fact on $\mathbb Z$-modules.

\begin{lemma}
\label{zbasiscor}
Let $X \subset \mathbb R^d$ be a $d$-dimensional cone with apex $\bm 0$.
There is a $\mathbb Z$-basis of $\mathbb Z^d$ which is contained in $\mathrm{int}(X)$.
\end{lemma}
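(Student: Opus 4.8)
The plan is to produce a single primitive lattice point in $\mathrm{int}(X)$, extend it to a $\mathbb Z$-basis of $\mathbb Z^d$, and then shear the remaining basis vectors far into the interior along that primitive point.

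First I would note that $\mathrm{int}(X)\ne\varnothing$ since $X$ is $d$-dimensional, and that $\mathrm{int}(X)$ is an open cone, i.e.\ stable under multiplication by positive scalars (because $X=tX$ gives $\mathrm{int}(X)=t\,\mathrm{int}(X)$ for all $t>0$). Pick $\bm p\in\mathrm{int}(X)$ and $\varepsilon>0$ with the open ball $B(\bm p,\varepsilon)\subseteq\mathrm{int}(X)$; scaling by $t>0$ gives $B(t\bm p,t\varepsilon)\subseteq\mathrm{int}(X)$. Choosing $t$ with $t\varepsilon>\sqrt d$ forces $B(t\bm p,t\varepsilon)$ to contain a lattice point $\bm q_0$ (every point of $\mathbb R^d$ lies within $\sqrt d/2$ of $\mathbb Z^d$), and $\bm q_0\ne\bm 0$ because $\bm 0\notin\mathrm{int}(X)$ (the apex is a vertex of $X$). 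Dividing by $\mathrm{gcd}(\bm q_0)$ and using again that $\mathrm{int}(X)$ is a cone, I obtain a \emph{primitive} lattice point $\bm q\in\mathrm{int}(X)$.

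Next, since $\bm q$ is primitive it extends to a $\mathbb Z$-basis $\bm q=\bm f_1,\bm f_2,\dots,\bm f_d$ of $\mathbb Z^d$. For a positive integer $N$ set $\bm g_1=\bm q$ and $\bm g_i=\bm f_i+N\bm q$ for $i=2,\dots,d$. The change-of-basis matrix from $(\bm f_1,\dots,\bm f_d)$ to $(\bm g_1,\dots,\bm g_d)$ is unitriangular — we only add multiples of $\bm f_1$ to the other vectors — so it has determinant $1$; hence $(\bm g_1,\dots,\bm g_d)$ is again a $\mathbb Z$-basis of $\mathbb Z^d$ for every $N$.

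Finally I would check that all $\bm g_i$ lie in $\mathrm{int}(X)$ once $N$ is large. This is immediate for $\bm g_1=\bm q$. For $i\ge 2$ write $\bm g_i=N\bigl(\bm q+\tfrac1N\bm f_i\bigr)$; since $\mathrm{int}(X)$ is open and $\bm q+\tfrac1N\bm f_i\to\bm q\in\mathrm{int}(X)$ as $N\to\infty$, we get $\bm q+\tfrac1N\bm f_i\in\mathrm{int}(X)$ for all sufficiently large $N$, and then $\bm g_i\in\mathrm{int}(X)$ because $\mathrm{int}(X)$ is closed under multiplication by the positive scalar $N$. Taking $N$ large enough for all $i=2,\dots,d$ at once completes the proof. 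There is no genuine obstacle here; the only steps needing a little care are the elementary geometry-of-numbers argument giving a lattice point in the dilated ball, and the observation that shearing by multiples of $\bm q$ preserves the basis property while pushing the whole tuple into the open cone.
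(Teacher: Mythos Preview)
Your proof is correct and follows essentially the same approach as the paper: find a primitive lattice point $\bm q\in\mathrm{int}(X)$, extend it to a $\mathbb Z$-basis, then shear the other basis vectors by large multiples of $\bm q$ to push them into $\mathrm{int}(X)$. You supply more detail than the paper does (the scaled-ball argument for finding the lattice point, the unitriangular justification, and the limit argument $\bm q+\tfrac1N\bm f_i\to\bm q$), but the structure is the same.
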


\begin{proof}
Take any integer vector $\bm n \in \mathrm{int}(X)$
with $\gcd(\bm n)=1$. 

First, we claim that 
there are $\bm n_1,\dots,\bm n_{d-1} \in \mathbb Z^d$ such that
$\bm n,\bm n_1,\dots,\bm n_{d-1}$ is a $\mathbb Z$-basis of $\mathbb Z^d$.
In fact, by the assumption, the $\mathbb Z$-module $\mathbb Z^d/( \mathbb Z\bm n )$ is a free $\mathbb Z$-module of rank $d-1$ since it is torsionfree. 
If we choose $\bm n_1,\dots,\bm n_{d-1} \in \mathbb Z^d$ so that they form a $\mathbb Z$-basis for $\mathbb Z^d/(\mathbb Z\bm n)$,
the sequence $\bm n,\bm n_1,\dots,\bm n_{d-1}$ becomes a $\mathbb Z$-basis of $\mathbb Z^d$.

Now, we show that we can choose $\bm n_1,\ldots,\bm n_{d-1}$ from $\mathrm{int}(X)$. 
For each $\bm n_i$, since $\bm n $ is in the interior of $X$, by taking a sufficiently large integer $k_i$, the point $\bm n_i + k_i \bm n$ is contained in $\mathrm{int}(X)$.
Then $\bm n, \bm n_1+k_1\bm n,\dots,\bm n_{d-1}+ k_{d-1} \bm n$ is a desired $\mathbb Z$-basis.
\end{proof}

\begin{proof}[Proof of Theorem \ref{thm:7.5}]
((ii) $\Rightarrow$ (i))
By taking an appropriate translation, we may assume $P=-P$.
Then $\bm p^*=-\bm p$ for every vertex $\bm p$ of $P$,
so the condition (ii) says that $2P$ is integral.
In particular, every quasi-polynomial $\TL_{P,\bm v}$ has period 2.
We prove that $P$ satisfies the condition (ii) of Lemma \ref{symcriteiron}.

Let $\bm v \in \mathbb Q^d$ and $k \in \{0,1\}$.
Since $P=-P$, Lemma \ref{lem:cs} says
\begin{align*}
\mbox{the $k$th constituent of $\TL_{P,\bm v}$}
&=\mbox{the $k$th constituent of $\TL_{P,-\bm v}$}.
\end{align*}
However, since $\TL_{P,-\bm v}$ has period $2$,
the RHS in the above equation equals
the $(-k)$th constituent of $\TL_{P,-\bm v}$.

((i) $\Rightarrow$ (ii))
We have already seen that (i) implies that $P$ is centrally symmetric in Corollary \ref{projsym}.
We prove the second condition of (ii) by induction on $d$.
Suppose $d=1$. Then we may assume
$$\textstyle P=[0,x+ \frac p q]$$
for some $x,p,q\in \mathbb Z_{\geq 0}$ with $0 \leq p <q$.
Then we have
$$
\textstyle
\mbox{ the first constituent of $\ehr_P$}= \mathrm{vol}(P)t- \frac p q + 1
$$
and
$$
\textstyle
\mbox{ the $(q-1)$th constituent of $\ehr_P$}= \mathrm{vol}(P)t-(q-1) \frac p q +\lfloor (q-1) \frac p q \rfloor+1.
$$
Then the condition (i) says $p-2p/q=\lfloor p(q-1)/q \rfloor$, but it implies $2p/q \in \mathbb Z$.
Hence $2P$ is integral which guarantees the condition (ii).

Suppose $d>1$.
Let $\bm p \in \mathbb Q^d$ be a vertex of $P$.
Consider the normal cone at the vertex $\bm p$
$$X=\{ \bm a \in \mathbb R^d\mid \max\{(\bm a,\bm x)\mid \bm x \in P\}=(\bm a,\bm p)\}.$$
This is a $d$-dimensional cone with apex $\bm 0$.
By Lemma \ref{zbasiscor},
there is a $\mathbb Z^d$-basis $\mathbf e'_1,\dots,\mathbf e'_d$ which is contained in $ \mathrm{int}(X)$.
Consider the linear transformation $g \in \mathrm{GL}_n(\mathbb Z)$ which changes the hyperplane $\{ \bm x \in \mathbb R^d \mid (\bm x,\mathbf e'_i)=0\}$
to $\{ \bm x=(x_1,\dots,x_d) \in \mathbb R^d \mid x_i=0\}$.
Since $g(\mathbb Z^d)=\mathbb Z^d$,
we have $\ehr_{g(P)+\bm u}(t)=\ehr_{P+g^{-1}(\bm u)}(t)$ for all $\bm u \in \mathbb Q^d$,
so $g(P)$ also satisfies the condition (i).
Let $g(\bm p)=(y_1,\dots,y_d)$.
By the choice of $\mathbf e_1',\dots,\mathbf e_d'$, we have 
$$g(P) \cap \{(x_1,\dots,x_d)\in \mathbb R^d \mid x_i=y_i\}=\{g (\bm p)\} \ \ \mbox{ for all $1 \leq i \leq d$}. $$
This says that $\pi_j(g(\bm p))$ is a vertex of $\pi_j(g(P))$ for $j=1,2,\dots,d$ and the same holds for $\pi_j(g(\bm p^*))$ by the central symmetry.
For each $j=1,2,\dots,d,$
Lemma \ref{projsym} says that
$\pi_j(g(P))$ satisfies the condition (i),
so we have that $\pi_j(g(\bm p)-g(\bm p^*)) \in \mathbb Z^{d-1}$ by the induction hypothesis.
But then we must have $g(\bm p- \bm p^*) \in \mathbb Z^d$ and therefore $\bm p-\bm p^* \in \mathbb Z^d$.
\end{proof}

If $P$ is a centrally symmetric polytope with the center $\bm c$ and $\bm p$ is a vertex of $P$,
then $\bm p-\bm p^*=2(\bm p-\bm c)$,
so Theorem \ref{thm:7.5} is equivalent to Theorem \ref{thm:1-6} in the Introduction.

\section{A connection to commutative algebra}
In this section,
we briefly explain a connection between translated lattice points enumerators and conic divisorial ideals of Ehrhart rings in commutative algebra.
In particular, we explain that Theorem \ref{thm:McMullen} can be proved algebraically using the duality of Cohen--Macaulay modules.

\subsection{Conic divisorial ideals}
Let $S=\mathbb F[x_1^\pm,\dots,x_{d+1}^\pm]$ be the Laurent polynomial ring over a field $\mathbb F.$
We will consider the grading of $S$ defined by $\deg(x_1)=\cdots=\deg(x_d)=0$ and $\deg(x_{d+1})=1$.
For $\bm a=(a_1,\dots,a_{d+1}) \in \mathbb Z^{d+1}$,
we write 
$$x^{\bm a}=x_1^{a_1} \cdots x_{d+1}^{a_{d+1}}.$$
Let $P \subset \mathbb R^d$ be a rational $d$-polytope.
The \textbf{Ehrhart ring} $\mathbb F[P]$ of $P$ (over $\mathbb F$) is the monoid algebra generated by the monomials $x^{\bm a}$ such that $\bm a$ is in the monoid $\mathcal C_P \cap \mathbb Z^{d+1}$.
As vector spaces, we can write
\begin{align}
    \label{3-1}
    \mathbb F[P]=\mathrm{span}_{\mathbb F}\{ x^{\bm a} \mid \bm a \in \mathcal C_P \cap \mathbb Z^{d+1}\}.
\end{align}
For a finitely generated graded $\mathbb F[P]$-module $M$, its {\bf Hilbert function} is the function defined by $\mathrm{hilb}(M,k)=\dim_{\mathbb F} M_k$ for $k \in \mathbb Z$,
where $M_k$ is the degree $k$ component of $M$,
and the {\bf Hilbert series} of $M$ is the formal power series $\Hilb(M,z)=\sum_{k \in \mathbb Z} \mathrm{hilb}(M,k) z^k$.
Ehrhart rings are closely related to Ehrhart quasi-polynomials.
Indeed, from \eqref{3-1},
we can see that the Hilbert function of $\mathbb F[P]$ is nothing but the Ehrhart quasi-polynomial of $P$.

For any $\bm v\in \mathbb R^{d+1}$,
the vector space
$$I_{\bm v}=\mathrm{span}_{\mathbb F} \{ x^{\bm a} \mid \bm a \in (\mathcal C_P + \bm v)\cap \mathbb Z^{d+1}\} \subset S$$
becomes a finitely generated graded $\mathbb F[P]$-module.
The modules $I_{\bm v}$ are called \textbf{conic divisorial ideals} of $\mathbb F[P]$.
We note that different vectors in $\mathbb R^{d+1}$ could give the same conic divisorial ideal,
more precisely, we have $I_{\bm v}=I_{\bm u}$ if and only if the cones $\mathcal C_P+\bm v$ and $\mathcal C_P + \bm u$ have the same lattice points.

Let us call a conic divisorial ideal $I$ \textbf{standard} if $I=I_{(\bm v,0)}$ for some $\bm v \in \mathbb R^d$.
Hilbert functions of standard conic divisorial ideals are nothing but translated lattice points enumerators.
Indeed, for any $\bm v \in \mathbb R^d$, we have
{\small
\begin{align}
    \label{3-2}
    \dim_{\mathbb F}(I_{(\bm v,0)})_t =\# \{\bm a\!=\!(a_1,\dots,a_{d+1}) \in (\scC_P+(\bm v,0)) \cap \mathbb Z^{d+1}\! \mid a_{d+1}\!=\!t\}=\# \big((tP+\bm v)\! \cap \mathbb Z^d \big).
\end{align}
}

We will not explain algebraic backgrounds on (conic) divisorial ideals of Ehrhart rings since it is not relevant to the theme of this paper.
But in the rest of this section we briefly explain how algebraic properties of conic divisorial ideals can be used to consider properties of translated lattice points enumerators.
For more detailed information on conic divisorial ideals,
see \cite{BG} and \cite[\S 4.7]{BG:book}.

\subsection{Hilbert series of conic divisorial ideals
and an algebraic proof of Theorem \ref{thm:McMullen}}

We need some basic tools on commutative algebra such as the Cohen--Macaulay property and canonical modules.
We refer the readers to \cite[\S3 and \S4]{BH} for basics on commutative algebra.

We introduce one more notation.
For $\bm v \in \mathbb R^{d+1}$,
we define
\begin{align}
    \label{3-3}
    I^\circ_{\bm v}=\mathrm{span}_{\mathbb F} \big\{x^{\bm a} \mid \bm a \in \big(\mathrm{int}(\mathcal C_P)+\bm v\big) \cap \mathbb Z^{d+1}\big\}.
\end{align}
The space $I^\circ_{\bm v}$ is also a conic divisorial ideal. Indeed, if $\bm w \in \mathrm{int}(\mathcal C_P)$ is a vector which is sufficiently close to the origin, then we have
$$ \big(\mathrm{int}(\mathcal C_P)+ \bm v \big)\cap \mathbb Z^{d+1}=(\mathcal C_P+\bm v + \bm w)\cap \mathbb Z^{d+1},$$
which says $I_{\bm v}^\circ = I_{\bm v+\bm w}$.
The following facts are known. See \cite[Corollary 3.3 and Remark 4.4(b)]{BG:book}.
\begin{itemize}
    \item $I_{\bm v}$ is a $(d+1)$-dimensional Cohen--Macaulay module.
    \item $I_{\bm v}^\circ$ is the canonical module of $I_{-\bm v}$, more precisely, we have
$$\Hom_{\mathbb F[P]}(I_{\bm v},\omega) \cong \mathrm{span}_{\mathbb F}\{x^{\bm a} \mid \bm a \in (\mathrm{int}(\mathcal C_P)-\bm v) \cap \mathbb Z^{d+1}\}=I_{-\bm v}^\circ,$$
    where
    $\omega=\mathrm{span}_{\mathbb F}\{x^{\bm a}\mid \bm a \in \mathrm{int}(\mathcal C_P) \cap \mathbb Z^{d+1}\}$ is the graded canonical module of $\mathbb F[P]$.
\end{itemize}
These properties give the following consequences on Hilbert series of conic divisorial ideals.

\begin{proposition}
    \label{prop:hilbseries}
    Let $P \subset \mathbb R^d$ be a rational $d$-polytope and $q$ the denominator of $P$.
    Let $\bm v=(v_1,\dots,v_{d+1}) \in \mathbb R^{d+1}$ and $\alpha=\lceil v_{d+1} \rceil$.
    \begin{itemize}
        \item[(1)] $\Hilb(I_{\bm v}^\circ,z)=(-1)^{d+1} \Hilb(I_{-\bm v},z^{-1})$.
        \item[(2)] $\Hilb(I_{\bm v},z)=\frac {z^\alpha} { (1-z^q)^{d+1} } Q(z)$ for some polynomial $Q(z) \in \mathbb Z_{\geq 0}[z]$ of degree $<q(d+1)$.
    \end{itemize}
\end{proposition}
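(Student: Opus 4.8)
The plan is to deduce both parts from the algebraic facts recalled just above Proposition \ref{prop:hilbseries}, namely that $I_{\bm v}$ is a $(d+1)$-dimensional Cohen--Macaulay graded $\mathbb F[P]$-module, that $I_{\bm v}^\circ$ is the canonical module of $I_{-\bm v}$, and that the Hilbert function of each conic divisorial ideal is (a shift of) a translated lattice point enumerator, hence a quasi-polynomial with period $q$ by Theorem \ref{thm:McMullen}(1). For part (1), I would invoke the standard duality statement that for a finitely generated graded Cohen--Macaulay module $M$ over a graded ring with graded canonical module, the Hilbert series of $M$ and of its canonical module $\Hom(M,\omega)$ are related by $\Hilb(\Hom(M,\omega),z) = (-1)^{\dim M}\Hilb(M,z^{-1})$; this is the module-theoretic form of Serre duality / local duality and can be cited from \cite[\S 3, \S 4]{BH}. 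Applying it to $M = I_{-\bm v}$, whose canonical module is $I_{\bm v}^\circ$ and whose dimension is $d+1$, gives exactly $\Hilb(I_{\bm v}^\circ,z)=(-1)^{d+1}\Hilb(I_{-\bm v},z^{-1})$.

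For part (2), the argument is combinatorial. We know from \eqref{3-2} that $\dim_{\mathbb F}(I_{(\bm v',0)})_t = \TL_{P,\bm v'}(t)$, and more generally that for $\bm v = (v_1,\dots,v_{d+1})$, translating by the $x_{d+1}$-direction just shifts the grading: writing $\bm v' = (v_1,\dots,v_d)$ one checks $\dim_{\mathbb F}(I_{\bm v})_t = \dim_{\mathbb F}(I_{(\bm v',0)})_{t-\alpha} = \TL_{P,\bm v'}(t-\alpha)$ for $t \ge \alpha$, and the degree-$t$ piece vanishes for $t < \alpha$ (this uses that $\mathcal C_P$ has apex at the origin and lies in the half-space $x_{d+1}\ge 0$, and $\alpha = \lceil v_{d+1}\rceil$ is the smallest integer degree that can occur). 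Hence $\Hilb(I_{\bm v},z) = z^{\alpha}\sum_{t\ge 0}\TL_{P,\bm v'}(t)z^t$. Since $\TL_{P,\bm v'}$ is a quasi-polynomial of degree $d$ with period $q$ (Theorem \ref{thm:McMullen}(1)), its generating function $\sum_{t\ge 0}\TL_{P,\bm v'}(t)z^t$ is a rational function of the form $Q(z)/(1-z^q)^{d+1}$ with $\deg Q < q(d+1)$; this is the standard fact that a quasi-polynomial of degree $d$ and period $q$ is characterized by such a Hilbert-series shape. The nonnegativity $Q(z)\in\mathbb Z_{\ge 0}[z]$ follows because $I_{\bm v}$ is a Cohen--Macaulay module of dimension $d+1$ over $\mathbb F[P]$: a maximal regular sequence of $d+1$ elements of degree $q$ (obtained after a Noether normalization of $\mathbb F[P]$ by elements of degree $q$, which exists since $qP$ is integral) reduces $I_{\bm v}$ to a finite-dimensional graded vector space whose Hilbert series is precisely $z^{\alpha}Q(z)$, forcing the coefficients of $Q$ to be nonnegative integers.

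The main obstacle I expect is pinning down part (1) cleanly: one must make sure the duality isomorphism $\Hom_{\mathbb F[P]}(I_{-\bm v},\omega)\cong I_{\bm v}^\circ$ is an isomorphism of \emph{graded} modules with the correct grading shift (the canonical module convention and the placement of the $a$-invariant must be tracked), so that passing to Hilbert series yields $z \mapsto z^{-1}$ with no extra monomial factor. A secondary technical point is justifying the existence of a homogeneous system of parameters of degree exactly $q$ for $\mathbb F[P]$ and the behavior of Cohen--Macaulayness under such a reduction; both are standard once one observes that $\mathbb F[P]$ is generated in degrees dividing $q$ after the normalization, but they should be stated carefully. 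Everything else is bookkeeping with generating functions.
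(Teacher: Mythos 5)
Parts of your argument line up with the paper: for (1) both you and the paper simply cite the Hilbert-series formula for canonical modules of graded Cohen--Macaulay modules from \cite{BH}, and your nonnegativity argument for (2) (Noether normalization of $\mathbb F[P]$ by degree-$q$ elements, Cohen--Macaulayness giving freeness over the parameter subring, hence a numerator in $\mathbb Z_{\geq 0}[z]$) is exactly the paper's route via the subring $A\cong\mathbb F[qP]$. The problem is the degree bound $\deg Q<q(d+1)$, which you derive from the claim that $\dim_{\mathbb F}(I_{\bm v})_t=\dim_{\mathbb F}(I_{(\bm v',0)})_{t-\alpha}=\TL_{P,\bm v'}(t-\alpha)$. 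This identity is false when $v_{d+1}\notin\mathbb Z$: the degree-$t$ slice of $\mathcal C_P+\bm v$ is $(t-v_{d+1})P+\bm v'$, not $(t-\alpha)P+\bm v'$, and replacing $v_{d+1}$ by its ceiling genuinely changes the lattice points. For instance, with $d=1$, $P=[0,1]$ and $\bm v=(\tfrac12,\tfrac25)$ one has $\dim(I_{\bm v})_1=1$ while $\TL_{P,1/2}(0)=0$. Consequently your appeal to Theorem \ref{thm:McMullen}(1) does not establish that the shifted Hilbert function is a quasi-polynomial of period $q$ valid from degree $\alpha$ on, and the ``standard fact'' about generating functions of quasi-polynomials cannot be invoked; freeness over the degree-$q$ parameter ring alone gives no upper bound on the degrees of the free generators. (Your argument is fine in the special case $v_{d+1}\in\mathbb Z$, but the proposition is stated, and needed, for arbitrary $\bm v\in\mathbb R^{d+1}$.)

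The paper closes this gap differently and you should adopt its device: it obtains the degree bound \emph{from part (1)}. Writing $\Hilb(I_{\bm v},z)=\frac{z^\alpha}{(1-z^q)^{d+1}}(c_0+\cdots+c_mz^m)$ with $c_m\neq 0$ (which your CM argument does give, together with the vanishing of $(I_{\bm v})_k$ for $k<\alpha$), the functional equation of (1) yields $\Hilb(I_{-\bm v}^\circ,z)=\frac{z^{q(d+1)-\alpha-m}}{(1-z^q)^{d+1}}(c_m+\cdots+c_0z^m)$, so the lowest degree in which $I_{-\bm v}^\circ$ can be nonzero is $q(d+1)-\alpha-m$; since $I_{-\bm v}^\circ$ is supported on $\mathrm{int}(\mathcal C_P)-\bm v$, whose points have last coordinate $>-v_{d+1}\geq-\alpha$, one gets $-\alpha<q(d+1)-\alpha-m$, i.e.\ $m<q(d+1)$. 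This sidesteps any identification of the Hilbert function with a translated lattice point enumerator. Your concern about grading conventions in (1) is reasonable but is resolved by the explicit graded isomorphism $\Hom_{\mathbb F[P]}(I_{\bm v},\omega)\cong I_{-\bm v}^\circ$ recorded just before the proposition.
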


\begin{proof}
The equality (1) is the well-known formula of the Hilbert series of a canonical module. See \cite[Theorem 4.45]{BH}.
We prove (2).
Consider the subring
\[
A=\mathrm{span}_{\mathbb F} \big\{ x^{\bm a} \mid x^{\bm a} \in \mathcal C_P \mbox{ and  $\deg (x^{\bm a}) \in q \mathbb Z$} \big \} \subset \mathbb F[P].
\]
Since $q P$ is integral, $\mathbb F[qP]$ is a semi-standard graded $\mathbb F$-algebra, that is, $\mathbb F[qP]$ is a finitely generated as a module over a standard graded $\mathbb F$-algebra $\mathbb F[x^{\bm a}x_{d+1}\mid x^{\bm a} \in P \cap \mathbb Z^d]$
(see \cite[Theorem 9.3.6]{Villarreal}(d)).
Then, since $A \cong \mathbb F[qP]$, where the degree $k$ part of $\mathbb F[qP]$ corresponds to the degree $qk$ part of $A$,
any finitely generated $A$-module $M$ of Krull dimension $m$ has the Hilbert series of the form $Q(z)/(1-z^q)^{m}$ for some polynomial $Q(z)$,
and if $M$ is Cohen--Macaulay then $Q(z) \in \mathbb Z_{\geq 0}[z]$
(\cite[Corollaries 4.8 and 4.10]{BH}).

Since $\mathbb F[P]$ is a finitely generated $A$-module,
$I_{\bm v}$ is a finitely generated Cohen--Macaulay $A$-module of Krull dimension $d+1$. Thus there is a polynomial $Q(z) \in \mathbb Z_{\geq 0}[z]$ such that
$$\Hilb(I_{\bm v},z)=\frac 1 {(1-z^q)^{d+1}} Q(z).$$
Since $(I_{\bm v})_k=0$ for $k <\alpha = \lceil v_{d+1} \rceil$ by the definition of $I_{\bm v}$,
the polynomial $Q(z)$ must be of the form
$$Q(z)=c_0t^\alpha+c_1t^{\alpha+1}+ \cdots +c_m t^{\alpha+m}$$
for some $m \geq 0$,
where $c_0,\dots,c_m \in \mathbb Z_{\geq 0}$ and $c_m \ne 0$, so it follows that
$$\Hilb(I_{\bm v},z)=\frac {z^{\alpha}} {(1-z^q)^{d+1}} (c_0+c_1z+ \cdots +c_mz^m). $$
Now it remains to prove $m <q(d+1)$.
By statement (1), we have
\begin{align*}
    \Hilb(I_{-\bm v}^\circ,z)&=(-1)^{d+1} \frac {z^{-\alpha}} {(1-z^{-q})^{d+1}} (c_0+c_1z^{-1}+\cdots+c_mz^{-m})\\
    &=\frac {z^{q (d+1)-\alpha-m}} {(1-z^q)^{d+1}} (c_m + c_mz + \cdots + c_0 z^m).
\end{align*}
This says
$$-\alpha < \min\{k \in \mathbb Z \mid (I_{-\bm v}^\circ)_k \ne 0\} = q(d+1)-\alpha -m$$
proving the desired inequality $m<q(d+1)$.
\end{proof}

The statements in Proposition \ref{prop:hilbseries} are known to imply the quasi-polynomiality and reciprocity of translated lattice points enumerators in Theorem \ref{thm:McMullen}.
Recall that $\TL_{P,\bm v}$ coincides with the Hilbert function of $I_{(\bm v,0)}$.
Proposition \ref{prop:hilbseries}(2) says that
the Hilbert series of $I_{(\bm v,0)}$ can be written in the form $\frac 1 {(1-z^q)^{d+1}}Q(z)$ for some polynomial $Q(z)$ of degree $<q(d+1)$,
which is known to imply that $\mathrm{hilb}(I_{(\bm v,0)},t)$
$(=\TL_{P,\bm v}(t)$) coincides with a quasi-polynomial with period $q$ for $t \geq 0$.
See e.g., \cite[\S 3.8]{BR:2007} or \cite[\S 4]{Stanley}.
Also, Proposition \ref{prop:hilbseries}(1) is essentially equivalent to the reciprocity in Theorem \ref{thm:McMullen}(2).
See \cite[\S 4.3]{BR:2007}.

Finally,
we note that the proposition gives some restriction to the possible values of $\TL_{P,\bm v}$.
If $P \subset \mathbb R^d$ is a lattice $d$-polytope and $\bm v \in \mathbb R^d$,
then the proposition says
$$\Hilb(I_{(\bm v,0)},z)=\frac 1 {(1-z)^{d+1}}(h_0+h_1z+ \cdots +h_d z^d)$$
for some $h_0,h_1,\dots,h_d \in \mathbb Z_{\geq 0}$.
These $h$-numbers must satisfy the following conditions
\begin{itemize}
    \item[(I)] $h_0=1$ if $\bm v \in \mathbb Z^d$ and $h_0=0$ if $\bm v \not \in \mathbb Z^d$;
    \item[(II)] $h_0+\cdots +h_d=d! \mathrm{vol}(P)$.
\end{itemize}
The first condition follows from $h_0=\dim_{\mathbb F}(I_{(\bm v,0)})_0$,
and the second condition follows since $\frac 1 {d!} (h_0+\cdots +h_d)$ is the top degree coefficient of the polynomial $\mathrm{hilb}(I_{(\bm v,0)},t)$.
Below we give a simple application of this.
Consider a lattice polygon  $P \subset \mathbb R^2$ whose volume is $\frac 3 2$.
Then the possible values of $h_0+h_1z+h_2z^2$ are
\[
1+z+z^2,\ 
1+2z,\ 
1+2z^2,\ 
3z,\ 
2z+z^2,\ 
z+2z^2,\ 
3z^2.
\]
If $f(t)$ is a polynomial $\sum_{t=0}^\infty f(t)z^t=\frac 1 {(1-z)^3} (h_0+h_1z+h_2z^2)$,
then $f(t)=h_0{t+2 \choose 2}+h_1 {t+1 \choose 2}+h_2 {t \choose 2}$.
So a translated lattice points enumerator of an integral polygon with volume $\frac 3 2$ must be one of the following polynomials
\[
\textstyle
\frac 3 2 t^2 + \frac 3 2 t +1,\
\frac 3 2 t^2 + \frac 5 2 t +1,\
\frac 3 2 t^2 + \frac 1 2 t +1,\
\frac 3 2 t^2 + \frac 3 2 t,\
\frac 3 2 t^2 + \frac 1 2 t,\
\frac 3 2 t^2 - \frac 1 2 t,\
\frac 3 2 t^2 - \frac 3 2 t.
\]
Four of them appear as translated lattice points enumerators of the trapezoid in the Introduction.
See \eqref{eq1-1}.

\begin{remark}
Alhajjar \cite[\S 4]{alh} studied the numbers $h_0,h_1,\dots,h_d$ mentioned above by a more combinatorial approach
and proved various results including (I) and (II).
\end{remark}

\section{Problems}

In this last section,
we list a few problems which we cannot answer.

\subsection*{Gcd property and zonotopes}
A quasi-polynomial $f$ with period $q$ is said to have the {\bf gcd property} if its $k$th constituent only depends on the gcd of $k$ and $q$ for all 
$k \in \mathbb Z$.
We note that if $f$ has the gcd property then $f$ must be symmetric.
It was proved in \cite{DY:2021} that,
for a lattice $d$-polytope $P \subset \mathbb R^d$,
$\ehr_{P+\bm v}$ has the gcd property for all $\bm v \in \mathbb Q^d$ if and only if $P$ is a zonotope.
Considering the statement in Theorem \ref{thm:1-6},
one may ask if a similar statement holds for zonotopes $P$ such that $2P$ is integral,
but this is not the case.
Indeed, the rhombus $Q$ in Example \ref{ex:4.2}
is a zonotope and $2Q$ is integral but the computation given in the example says that
$\ehr_{Q+(\frac 1 8, \frac 1 8)}$ does not satisfy the gcd property.
We repeat the following question asked in \cite[Problem 6.7(2)]{DY:2021}.

\begin{problem}
Let $P \subset \mathbb R^d$ be a rational $d$-polytope.
Is it true that,
if $\ehr_{P+ \bm v}$ has the gcd property for all $\bm v \in \mathbb Q^d$,
then $P=Q+\bm u$ for some integral zonotope $Q$ and some $\bm u \in \mathbb Q^d$?
\end{problem}

To consider this problem
we can assume that $P$ is a zonotope by the argument similar to the proof of Corollary \ref{projsym}(i)
and $2P$ is integral by Theorem \ref{thm:1-6}.

\subsection*{Period collapse}
Recall that the denominator of a rational polytope $P$ is always a period of $\ehr_{P}$.
If the minimum period of $\ehr_{P}$ is not equal to the denominator of $P$, we say that period collapse occurs to $P$.
A period collapse is a major subject in the study of Ehrhart quasi-polynomials (see e.g., \cite{BSW,HM,MM,MW}).
We ask the following vague question:
Can a relation between $\ehr_{P+\bm v}$ and $\TL_{P,\bm v}$ be used to produce polytopes giving period collapse?
For translations of a lattice polytope,
a period collapse cannot occur.
Indeed, if $P$ is a lattice polytope, then the minimum period of $\ehr_{P+\bm v}$ must be the smallest integer $k$ such that $k \bm v$ is integral since 
the constant term of the $k$th constituent of $\ehr_{P+\bm v}$ is $\TL_{P,k\bm v}(0)=\# (\{k\bm v\} \cap \mathbb Z^n)$,
which is non-zero only when $k\bm v$ is integral.

\end{document}